\title{The ring of projective invariants of eight points on the line via
representation theory}
\author[Benjamin Howard, John Millson, Andrew Snowden and Ravi Vakil]
{Benjamin Howard, John Millson, Andrew Snowden and Ravi Vakil*}
\thanks{*B. Howard was partially supported by NSF grants DMS-0405606 and
DMS-0703674.  J. Millson was partially supported by the Simons Foundation
and NSF grants DMS-0405606 and DMS-05544254.  R. Vakil was partially supported
by NSF grant DMS-0801196.}
\date{September 7, 2008.}
\newtheorem{theorem}{Theorem}[section]
\newtheorem{lemma}[theorem]{Lemma}
\newtheorem{corollary}[theorem]{Corollary}
\newtheorem{proposition}[theorem]{Proposition}
\newtheorem{problem}[theorem]{Problem}
\theoremstyle{definition}
\theoremstyle{remark}
\newtheorem{remark}[theorem]{\it Remark}
\newcommand{\sfrac}[2]{{\textstyle \frac{#1}{#2}}}
\newcommand{\cut}[1]{}
\DeclareMathOperator{\Proj}{Proj}
\DeclareMathOperator{\Sym}{Sym}
\DeclareMathOperator{\bw}{{\bigwedge}^{\! 2}}
\DeclareMathOperator{\Hom}{Hom}
\DeclareMathOperator{\Aut}{Aut}
\DeclareMathOperator{\End}{End}
\DeclareMathOperator{\sgn}{sgn}
\DeclareMathOperator{\Tor}{Tor}
\DeclareMathOperator{\Spec}{Spec}
\DeclareMathOperator{\rk}{rk}
\DeclareMathOperator{\chr}{char}
\def\GL{\mathrm{GL}}
\def\SL{\mathrm{SL}}
\def\Sp{\mathrm{Sp}}
\def\Z{\mathbb{Z}}
\def\P{\mathbb{P}}
\def\C{\mathbb{C}}
\def\Q{\mathbb{Q}}
\def\F{\mathbb{F}}
\def\etc{{\it etc.}}
\def\half{\sfrac{1}{2}}
\def\wt{\mathbf{wt}}
\def\cq{/\!/}
\let\ms\mathscr
\let\mf\mathfrak
\let\mc\mathcal
\let\ss\scriptstyle
\let\wt\widetilde
\let\ol\overline
\begin{document}


\begin{abstract}
  The ring of projective invariants of eight ordered points on the
  line is a quotient of the polynomial ring on $V$, where $V$ is a
  fourteen-dimensional representation of $S_8$, by an ideal $I_8$,
  so the modular fivefold $(\P^1)^8 \cq\, \GL(2)$ is $\Proj(\Sym^{\bullet}(V)/I_8)$.
  We show that there is a unique cubic hypersurface $S$ in $\P V$ whose
  equation $s$ is skew-invariant, and that the singular locus of $S$
  is the modular fivefold.  In particular, over $\Z[1/3]$, the modular
  fivefold is cut out by the $14$ partial derivatives of $s$.  Better:
  these equations generate $I_8$.  In characteristic 3, the cubic $s$ is
  needed to generate the ideal.  The existence of such a cubic was
  predicted by Dolgachev. Over $\Q$, we recover the $14$ quadrics
  found by computer calculation by Koike \cite{koike}, and our
  approach yields a conceptual representation-theoretic description of
  the presentation.  Additionally we find the graded Betti numbers 
  of a minimal free resolution in any characteristic.

  The proof over $\Q$ is by pure thought, using Lie theory and commutative algebra.
  Over $\Z$, the assistance of a computer
  was necessary.  This result will be used as the base case
  describing the equations of the moduli space of an arbitrary number of points
  on $\P^1$, with arbitrary weighting, in \cite{HMSV}, completing
  the program of  \cite{hmsv1}.  The modular fivefold, and corresponding ring,
  are known to have a number of
  special incarnations, due to Deligne-Mostow, Kondo, and Freitag-Salvati
  Manni, for example as ball quotients or ring of modular forms respectively.
\end{abstract}

\maketitle

\tableofcontents

\section{Introduction}

Let $n$ be an even integer, let $M_n=(\P^1)^n \cq \GL(2)$ be the GIT
quotient of $n$ ordered points on the projective line, and let $R_n$
be the projective coordinate ring of $M_n$.

The first general results on the ring $R_n$ were found by Kempe \cite{Kempe}
in 1894.  He showed that elements of $R_n$ can naturally be interpreted
as (formal linear combinations of) regular graphs on $n$ vertices.  We
review this theory in \S \ref{s-review}. 
Kempe used this insight to  prove that $R_n$ is generated in degree one.
Thus $R_n$ has a natural set of generators: the \emph{matchings}
(regular degree one graphs) on
$n$ vertices.

Let $I_n$ be the {\em ideal of relations}, the kernel of
$\Sym(R_n^{(1)}) \to R_n$.  The problem of giving a presentation for
$R_n$ is thus reduced to determining generators for the ideal $I_n$.
For $n=4$, $I_n=0$.  For $n=6$, the ideal is generated by a single,
beautiful cubic equation:  the Segre cubic relation (see for example
\cite[p.~17]{DO}).  In \cite{HMSV}, we will show that for $n \geq 8$,
away from small characteristic, $I_n$ is generated by an explicit
simple class of quadrics.  The argument will rely
on the base case $n=8$, the subject of this paper.  It turns
out that this case (like the cases $n=4$ and $n=6$) has some special
extrinsic geometry.

Our main theorems are the following.  Note that $R_8^{(1)}$ is 14
dimensional.

\begin{theorem}
\label{mainthm}
Over $\Q$, there is a unique (up to scaling) non-zero skew-invariant cubic
polynomial $s$ in $\Sym^3 ( R_8^{(1)})$.  It vanishes on $M_8$, and $M_8$ is
the singular locus of $s=0$.  Better: the 14 partial derivatives of $s$
generate $I_8$ --- the singular scheme of the affine cone of $s=0$ is
precisely the affine cone over $M_8$.  These 14 partial derivatives
have no syzygies of degree zero or one.  In terms of graphs, $s$ may
be taken to be the skew-average of the cube of any matching.
\end{theorem}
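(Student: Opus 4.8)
The plan is to run the whole argument through the representation theory of $S_8$. Write $V:=R_8^{(1)}$, a $14$-dimensional $S_8$-module; Kempe's graph description together with classical Schur--Weyl/Howe duality identifies $V$ with $\bigl((\C^2)^{\otimes 8}\bigr)^{\SL_2}$, which is the irreducible Specht module $S^{(4,4)}$ (equivalently $S^{(2,2,2,2)}\otimes\sgn$); irreducibility of $V$ is what makes the mechanism work. The engine is the decomposition of the low symmetric powers $\Sym^2 V$ and $\Sym^3 V$ as $S_8$-modules (plethysm), together with the $S_8$-module structure of $R_8$ in degrees $2$ and $3$, all computable by pure thought from $\SL_2$-invariant theory; throughout we use that we are in characteristic $0$ (so representations are semisimple) and, for Euler's relation, that $\chr\neq 3$.

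\emph{Existence and uniqueness of $s$.} A character computation --- using the character of $V$ and the standard expression for $\chi_{\Sym^3 V}$ in terms of the values of $\chi_V$ on $\sigma,\sigma^2,\sigma^3$ --- shows that $\sgn$ occurs in $\Sym^3 V$ with multiplicity exactly $1$, giving a nonzero skew-invariant cubic $s$, unique up to scaling. For the graph description, a single finite computation shows that the skew-average over $S_8$ of $m^{3}$ is nonzero for some (hence, $S_8$ being transitive on matchings, for every) perfect matching $m$; being a nonzero skew-invariant cubic, it is a scalar multiple of $s$.

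\emph{From $s$ to its partials, and the ideal.} Differentiation gives a map $\mathrm{d}s\colon V\otimes\sgn\to\Sym^2 V$ that is $S_8$-equivariant precisely because $s$ is skew-invariant; since $V\otimes\sgn$ is irreducible, $\mathrm{d}s$ is zero or injective, and it is not zero, so it is injective. Thus the $14$ partials $\partial_0 s,\dots,\partial_{13}s$ are linearly independent --- there are no syzygies of degree zero --- and span a submodule of $\Sym^2 V$ isomorphic to $V':=V\otimes\sgn=S^{(2,2,2,2)}$. Now a second character computation shows $V'$ occurs in $\Sym^2 V$ with multiplicity exactly $1$, while a computation of the $S_8$-module $R_8^{(2)}$ (of dimension $91$) shows it contains no copy of $V'$; hence $I_8^{(2)}$ is precisely the unique copy of $V'$ in $\Sym^2 V$, so $\mathrm{span}\{\partial_i s\}=I_8^{(2)}$. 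By Euler's relation $3s=\sum_i x_i\,\partial_i s$ this forces $s\in V\cdot I_8^{(2)}\subseteq I_8$, i.e.\ $s$ vanishes on $M_8$. Granting that $I_8$ is generated in degree $2$ (which recovers Koike's presentation and is extracted from the module structure of $R_8$), we get $(\partial_0 s,\dots,\partial_{13}s)^{(d)}=\Sym^{d-2}V\cdot I_8^{(2)}=I_8^{(d)}$ for all $d$, so the two ideals coincide; and since $\chr\neq 3$, Euler's relation shows the singular scheme of the affine cone $\{s=0\}$ is cut out by the partials alone, hence equals $\Spec R_8$. Projectively, $M_8$ is exactly the singular locus of $\{s=0\}$.

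\emph{Linear syzygies, and the main obstacle.} ``No syzygies of degree one'' is the assertion that the multiplication $V\otimes\mathrm{span}\{\partial_i s\}=V\otimes V'\to\Sym^3 V$ is injective, equivalently that $\dim R_8^{(3)}=\binom{16}{3}-14^2=364$; this, like the degree-$2$ generation of $I_8$, is part of pinning down the minimal free resolution of $R_8$ and follows from its Hilbert series together with the $S_8$-module structure. This commutative-algebra input is the real obstacle: for a codimension-$8$ Gorenstein ideal that is not a complete intersection the Betti numbers are not determined by the Hilbert function, so one must use the special structure here --- the Gorenstein/normality of $R_8$, the Kempe graph calculus, and the fact that $I_8$ is literally the Jacobian ideal of the cubic $s$ --- to establish that $I_8$ is generated by quadrics with no linear syzygies (and, for the Betti-number statement of the abstract, the remainder of the resolution).
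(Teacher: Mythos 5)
Your handling of the first two pieces — existence/uniqueness of $s$, and the fact that its 14 partials are independent and span $I_8^{(2)}$ — is essentially correct and, for the $\Sym^2$ step, uses the same mechanism as the paper: $\mathrm{d}s$ is $S_8$-equivariant because $s$ is skew-invariant, $V\otimes\sgn$ is irreducible, $\Sym^2 V$ is multiplicity-free, and $I_8^{(2)}$ is the unique copy of $S^{(2,2,2,2)}$. For existence and uniqueness your route (compute $\langle\chi_{\Sym^3 V},\sgn\rangle$ by the plethysm formula over the 22 conjugacy classes) differs from the paper's: the paper rewrites $\Sym^3 V$ as $\bigl(((W^*)^{\otimes L})^{\SL(U)^3}\bigr)_{S_3}$ with $W=U^{\otimes 3}$ symplectic of rank $8$ and identifies the skew-invariant line with the Pfaffian-type element $\eta^{L/2}$, which delivers $s=\sum_\Gamma \sgn(\Gamma)\,X_\Gamma^3$ directly; your version instead needs a separate finite computation to see the skew-average of a matching cube is nonzero. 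Your Euler-relation remarks ($3s=\sum x_i\partial_i s$ gives $s\in I_8$, and in $\chr\ne 3$ the singular scheme is the common zero locus of the partials) are correct and match the paper.

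There is, however, a genuine gap at the heart of the argument. You \emph{grant} that $I_8$ is generated in degree two and that the partials have no linear syzygies, and you explicitly flag the latter as ``the real obstacle,'' closing with the observation that the Hilbert series alone does not pin down the Betti numbers of a codimension-$8$ Gorenstein ideal and that ``one must use the special structure here.'' That is precisely correct — and it means your proposal stops short of proving the theorem, because those two granted statements \emph{are} the theorem. The paper's key move, entirely absent from your write-up, is to notice that the space of linear syzygies among the $\partial_i s$ is $\mf g\otimes ks$, where $\mf g=\ker\bigl(\End(V)\to\Sym^3 V,\ A\mapsto As\bigr)$ is the annihilator of $s$ in $\mf{gl}(V)$, which is automatically a Lie subalgebra and is $S_8$-stable. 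A structure theory of $G$-stable Lie subalgebras of $\mf{sl}(V)$ (primitivity of $V$ forces semisimplicity; a numerical constraint from $\dim V=14$ and multiplicity-freeness of $V^{\otimes 2}$ rules out all simple types except via $\mf{so}(V)$) shows the only candidates are $0$, $\mf{so}(V)$, $\mf{sl}(V)$; an explicit ODE-style computation shows $\mf{so}(V)$ kills no nonzero cubic; hence $\mf g=0$, i.e.\ $b_{2,1}=0$. Only then does the Gorenstein symmetry $b_{i,j}=b_{8-i,4-j}$ (from $a=-2$, computed via Stanley's criterion from the Hilbert series), together with (B3), force $b_{i,1}=0$ for $i\geq 2$ and $b_{1,j}=0$ for $j\geq 2$, i.e.\ degree-two generation. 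To turn your outline into a proof you would need to supply this Lie-theoretic engine or an equivalent replacement for it; as written, the hard half of the theorem is assumed rather than proved.
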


This result was predicted to us by Igor Dolgachev.  It will be proved by pure
thought, that is, without the use of a computer or long, explicit formulas.
One consequence is a natural duality between the degree~1 piece of the ring
(with representation corresponding to the partition $4+4$) and the degree~2
piece of the ideal (with sign-dual representation $2+2+2+2$) into the sign
representation given by the cubic.

With the aid of a computer, we have a stronger integrality result:

\begin{theorem}
\label{mainthm2}
Over $\Z$, there is a non-zero cubic polynomial $s'$ in $\Sym^3(
R_8^{(1)})$ (a rational multiple of the $s$ of Theorem~\ref{mainthm})
such that $M_8$ is the singular locus of $s'=0$.  Better: the
ideal $I_8$ is generated over $\Z$ by $s'$ and its 14 partial
derivatives.  In particular, $I_8$ is generated over $\Z[1/3]$ by the
14 partial derivatives of $s'$.  Over $\Z$, the cubic $s'$
is not generated by its partial derivatives.
\end{theorem}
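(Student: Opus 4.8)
The plan is a descent from $\Q$ to $\Z$ built on Theorem~\ref{mainthm}. First pin down $s'$: the $\Z$-submodule $\{f\in\Sym^3(R_8^{(1)}) : \sigma f=\sgn(\sigma)\,f\ \text{for all }\sigma\in S_8\}$ of the free $\Z$-module $\Sym^3(R_8^{(1)})$ is itself free and \emph{saturated} (if $nf$ is skew-invariant then so is $f$), and has rank one by Theorem~\ref{mainthm}; let $s'$ be a generator, a primitive integral vector on the line $\Q s$. Each graded piece $R_8^{(d)}$ is a module of invariants inside a free $\Z$-module, hence $\Z$-free; together with the fact that $R_8$ is generated in degree one (Kempe, \cite{Kempe}) this makes the sequences $0\to I_8^{(d)}\to \Sym^d(R_8^{(1)})\to R_8^{(d)}\to 0$ split over $\Z$, so $I_8$ is $\Z$-saturated ($I_8\otimes\Q\,\cap\,\Z[x]=I_8$) and $R_8$ is $\Z$-flat with characteristic-independent Hilbert function. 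Combining $\Z$-saturation with Theorem~\ref{mainthm} (which places $s$ and the $\partial_i s$ in $I_8\otimes\Q$) gives $s',\partial_1 s',\dots,\partial_{14}s'\in I_8$. Put $J=(s',\partial_1 s',\dots,\partial_{14}s')$ and $J'=(\partial_1 s',\dots,\partial_{14}s')\subseteq J\subseteq I_8$; by Theorem~\ref{mainthm}, $J\otimes\Q=I_8\otimes\Q$. It then suffices to prove (A) $J=I_8$ over $\Z$ --- which yields every assertion except the last sentence --- and (B) $s'\notin J'$ over $\Z$.

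For (A): the module $M:=I_8/J$ is finitely generated and graded over $\Z[x]$ and is $\Z$-torsion (as $M\otimes\Q=0$), hence killed by some integer $N$; so $M=0$ as soon as $M\otimes\F_p=0$ for each prime $p\mid N$, by Nakayama on the $p$-primary parts over the Artinian rings $\Z/p^{v_p(N)}$. Since $R_8$ is $\Z$-flat, $I_8\otimes\F_p$ is the ideal $\bar I_8:=I_8\,\F_p[x]=\ker(\F_p[x]\to R_8\otimes\F_p)$, whose Hilbert function is independent of $p$; the image of $J\otimes\F_p\to I_8\otimes\F_p$ is $\bar J:=J\,\F_p[x]$, and by right-exactness $M\otimes\F_p=\bar I_8/\bar J$. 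Thus (A) is equivalent to $\bar J=\bar I_8$ in $\F_p[x]$ for the finitely many $p\mid N$. Inspecting the proof of Theorem~\ref{mainthm} --- which rests on semisimplicity of $\Q[S_8]$ and certain multiplicities in $\Sym^\bullet(R_8^{(1)})$ together with a commutative-algebra argument, all descending to $\Z[1/8!]$ --- shows these bad primes lie in $\{2,3,5,7\}$. For each such $p$ one verifies $\bar J=\bar I_8$ by computer: compute a Gr\"obner basis of the explicit ideal $\bar J\subseteq\F_p[x_1,\dots,x_{14}]$, read off its Hilbert function, and check it agrees with the characteristic-independent Hilbert function of $R_8$; since $\bar J\subseteq\bar I_8$ automatically, equality of Hilbert functions forces $\bar J=\bar I_8$. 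This is the step for which a computer is necessary.

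Granting (A), the singular scheme of the hypersurface $\{s'=0\}\subseteq\P^{13}_\Z$ is $V(s',\partial_1 s',\dots,\partial_{14}s')=V(J)=V(I_8)=M_8$, so $M_8$ is the singular locus; and over $\Z[1/3]$ Euler's identity $3s'=\sum_i x_i\,\partial_i s'$ puts $s'$ into $J'\otimes\Z[1/3]$, whence $J'\otimes\Z[1/3]=J\otimes\Z[1/3]=I_8\otimes\Z[1/3]$, i.e.\ the fourteen partials generate $I_8$ over $\Z[1/3]$. For (B): the same Euler identity gives $3s'\in J'$, and since (A) yields $I_8=J=J'+(s')$, the quotient $I_8/J'$ is the cyclic $\Z$-module generated by the class of $s'$, which is killed by $3$; for $p\ne 3$ it vanishes after localizing at $p$ (there $3$ is a unit and $s'\in J'\otimes\Z_{(p)}$), so $I_8/J'=0$ iff $\bar s'\in\bar J'$ in $\F_3[x]$. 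Hence (B) is the single assertion $\bar s'\notin\bar J'$ over $\F_3$ --- equivalently $\dim_{\F_3}(\F_3[x]/\bar J')_3=\rk_\Z R_8^{(3)}+1$, with $\bar s'$ the extra class --- verified by a Gr\"obner computation over $\F_3$. Heuristically this must hold: modulo $3$, Euler's identity degenerates to the degree-one syzygy $\sum_i x_i\,\overline{\partial_i s'}=0$, a syzygy Theorem~\ref{mainthm} rules out over $\Q$, so in characteristic $3$ the cubic is genuinely lost.

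The main obstacle is precisely the small primes $2,3,5,7$ dividing $|S_8|$: there the representation theory of $S_8$ and the commutative-algebra input of Theorem~\ref{mainthm} are genuinely modular, and in characteristic $3$ the conclusion itself changes (the cubic is needed), so no argument by pure thought can cover these cases and one is forced into explicit Gr\"obner-basis computations in fourteen variables over $\F_p$. The remaining ingredients --- that $R_8$ is $\Z$-flat with characteristic-independent Hilbert function, and that the proof of Theorem~\ref{mainthm} introduces no denominators beyond $8!$ --- are routine bookkeeping.
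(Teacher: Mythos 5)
Your overall structure is plausible and shares the spirit of the paper's proof (descent from Theorem~\ref{mainthm} plus an unavoidable computer check), but it contains a genuine gap that the paper itself flags. You claim that ``inspecting the proof of Theorem~\ref{mainthm}\ldots shows these bad primes lie in $\{2,3,5,7\}$,'' i.e.\ that the characteristic-zero argument descends to $\Z[1/8!]$. The paper explicitly disavows this in the Remark following Theorem~\ref{mainthm2}: ``Theorem~\ref{mainthm} automatically holds over $\Z[1/N]$ for some integer $N$. This integer cannot be determined from our proofs.'' The obstacle is not ``routine bookkeeping'': the commutative-algebra input (Hochster--Roberts) fails in positive characteristic because $\SL(2)\times T$ is not linearly reductive there, and the Lie-theoretic classification of $G$-stable subalgebras of $\mf{sl}(V)$ (Lie's theorem, classification of simple Lie algebras, Weyl dimension formula) does not obviously survive reduction mod small primes either. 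Without an a priori bound on the bad primes, your plan of running finitely many Gr\"obner-basis verifications over $\F_p$ cannot be completed, so step (A) is not actually proved.

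The paper avoids this problem entirely by a different reduction: it invokes the main theorem of \cite{hmsv1}, which asserts over $\Z$ that $I_n$ is generated by $I_n^{(2)}$, $I_n^{(3)}$ and $I_n^{(4)}$, and then verifies $I^{(d)}=J^{(d)}$ over $\Z$ for $d=2,3,4$ directly. Degree $2$ is done by hand (each partial of $s'$ has a monomial with unit coefficient unique to it, so they span a saturated sublattice matching $I^{(2)}\otimes\Q$), and degrees $3$ and $4$ are checked by computing Smith normal forms of the integer matrices of products $u\,\partial_v s'$ (together with $s'$ itself) against a basis of monomials, verifying that all elementary divisors are~$1$. This simultaneously handles all primes without ever naming them. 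Your treatment of the $\Z[1/3]$ statement via Euler's identity $3s'=\sum_i x_i\partial_i s'$ is exactly the same observation the paper uses implicitly, and your analysis of part (B) — that $I/J'$ is $3$-torsion and one must verify $\bar s'\notin\bar J'$ over $\F_3$ — is sound and matches the paper's finding that the analogous integer matrix for $J'$ alone has an elementary divisor equal to $3$. To salvage your prime-by-prime approach you would need an independent bound on the torsion of $I_8/J$; the cleanest fix is to adopt the paper's route through the degree-$\le 4$ generation theorem, or else to run the Gr\"obner/Smith computation over $\Z$ rather than over individual $\F_p$.
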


This result is proved and discussed further in \S \ref{s-integrality}.

\begin{remark}
It would be ideal, of course, to have a pure thought proof of
Theorem~\ref{mainthm2}.  Here is where we stand with respect to this.
Theorem~\ref{mainthm} automatically holds over $\Z[1/N]$ for some integer $N$.
This integer cannot be determined from our proofs.  However, using analogues
of our proofs in positive characteristic, one can obtain a precise value of
$N$ so that the theorem remains true over $\Z[1/N]$.  Unfortunately, we
cannot get down to $N=3$ by these methods.  Since the positive characteristic
arguments are more complicated, use less well-known facts and do not yield an
optimal result, we decided not to include them.
\end{remark}

\subsection{Other manifestations of this space, and this graded ring}
\label{s:1-1}

The extrinsic and intrinsic geometry of $M_n$ for small $n$ has
special meaning often related to the representation theory of $S_n$.
For example, $M_4$ relates to the cross ratio, $M_5$ is the quintic
del Pezzo surface, and the geometry of the Segre cubic $M_6$ is well
known (see for example \cite{hmsv6} for the representation theory).
The space $M_8$ might be the last of the $M_n$ with such individual
personality.  For example, over $\C$, the space may be interpreted as
a ball quotient in two ways:
\begin{enumerate}
\item Deligne and Mostow \cite{dm} showed that $M_8$ is isomorphic to
  the Satake-Baily-Borel compactification of an arithmetic quotient of
  the $5$-dimensional complex ball, using the theory of periods of a
  family of curves that are fourfold cyclic covers of $\P^1$
  branched at the $8$ points.
\item Kondo \cite{kondo} showed that $M_8$ may also be interpreted in
  terms of moduli of certain K3 surfaces, and thus $M_8$ is isomorphic
  to the Satake-Baily-Borel compactification of a quotient of the
  complex $5$-ball by $\Gamma(1-i)$, an arithmetic subgroup of a
  unitary group of a hermitian form of signature $(1,5)$ defined over
  the Gaussian integers.  See also \cite[p.~12]{fs} for further
  clarification and discussion.
\end{enumerate}
Both interpretations are $S_8$-equivariant (see  \cite[p.~8]{kondo}
for the second).

Similarly, the graded ring $R_8$ we study has a number of manifestations:
\begin{enumerate}
\item It is the ring of genus $3$ hyperelliptic modular forms of level 2.
\item Freitag and Salvati Manni showed that $R_8$ is isomorphic to the
  full ring of modular forms of $\Gamma(1-i)$ \cite[p.~2]{fs}, via
 the Borcherds additive lifting.
\item The space of sections of multiples of a certain line bundle on
$\ol{\mc{M}}_{0,8}$ (as there is a morphism $\ol{\mc{M}}_{0,8} \to M_8$,
\cite{kapranov}, see also \cite{avritzerlange}).
\item Igusa \cite{igusa} showed that there is a natural map $A(\Gamma_3[2])/
\mc{I}_3[2]^0 \to R_8$, where $A(\Gamma_3[2])$ is the ring of Siegel modular forms
of weight 2 and genus 3. (See \cite[\S 3]{fs} for more discussion.)\item It is a quotient of the third in a sequence of algebras related to the
orthogonal group $\mathrm{O}(2m, \F_2)$ defined by Freitag and Salvati
Manni (see \cite{fs1}, \cite[\S 2]{fs}).  (The cases $m=5$ and $m=6$ are
related to Enriques surfaces.)
\end{enumerate}
The Hilbert function $f(k) = \dim{R_8^{(k)}}$ was found by Howe
\cite[p.~155, \S 5.4.2.3]{howe}:
\begin{displaymath}
f(k) = \sfrac{1}{3} k^5 + \sfrac{5}{3} k^4 + \sfrac{11}{3} k^3+
\sfrac{13}{3} k^2+3k+1,\qquad \textrm{for $k \ge 0$.} 
\end{displaymath}
The Hilbert series $H(t) = \sum_{k=0}^\infty f(k) t^k$ is 
\begin{displaymath}
H(t) = \frac{1+8t+22t^2+8t^3+t^4}{(1-t)^6}.
\end{displaymath}
Both of these formulas are given in \cite[p.~7]{fs}.

One reason for $M_8$ to be special is the coincidence $S_8 \cong
\mathrm{O}(6, \F_2)$.  A geometric description of this isomorphism in this
context is given in \cite[\S 4]{fs}.  Another reason is Deligne and Mostow's
table \cite[p.~86]{dm}.

\subsection{Other manifestations of the cubic}

Let $n$ be an even integer.
There are natural generators of $R^{(1)}_n$, one for each directed matching
on $n$ labeled vertices (see \S \ref{s-graph}).  The group $S_n$ acts on
these coordinates in the obvious way.  The signed sum of the cubes of these
matchings $s_n$, regarded as an element of $\Sym^3(R_n^{(1)})$, is
skew-invariant.  By skew-invariance, it must vanish on those points of
$M_n$ where two of the $n$ points come together.  Hence $s_n$ must be
divisible by the discriminant, which has degree $\half \binom{n}{2}$.  Thus for
$n \geq 6$, $s_n$ vanishes on $M_n$.  (This cubic appeared in the e-print
\cite[\S 2.10]{hmsveprint}, but was removed in the published version because
its centrality was not yet understood.)  For $n=4$, $s_4$ vanishes precisely
on the boundary of $M_4$.  For $n=6$, $s_6$ is the Segre cubic.  For $n=8$,
$s_8$ is the $s$ of Theorem~\ref{mainthm} (although it must be scaled to give
the $s'$ of Theorem~\ref{mainthm2}).  And for $n > 8$, it may be shown that
$s_n=0$, so $n=8$ is indeed the last interesting case.
\label{other}

\subsection{Outline of the proof of Theorem~\ref{mainthm}}

We now describe the main steps in the proof of Theorem~\ref{mainthm}:
\begin{enumerate}
\item We first prove the existence and uniqueness of the cubic $s$, using
just linear algebra.
\item Next we prove that the partial derivatives of $s$ generate $I_8^{(2)}$,
using the structure of
the relevant spaces as $S_8$-modules.
\item We then prove that the partial derivates of $s$ have no linear syzygies.
This is where most of the work occurs.
\item Finally we use the fact that $R_8$ is Gorenstein and step (3) to fill in the
Betti diagram of $R_8$.  From this we see that $I_8$ is generated by quadrics.
\end{enumerate}
Steps (3) and (4) can of course be replaced by Koike's computer calculation
\cite{koike}, at the expense of the conceptual argument.  As a simple corollary
of the step (3) we find that $I_8^{(2)}$ generates $I_8^{(3)}$.  We could then
replace step (4) by an appeal to a result in \cite{HMSV} which states that $I_n$
is generated by $I_n^{(2)}$ and $I_n^{(3)}$ for any $n$.  However, we prefer to
avoid referring to a later paper.

Step (3) may be further broken down as follows:
\begin{enumerate}
\item[(3a)] Let $\Psi:\End(R_8^{(1)}) \to I_8^{(3)}$ be the map given by $A
\mapsto A s$, where $As$ is defined via the natural action of the Lie algebra
$\End(R_8^{(1)}) \cong \mf{gl}(14)$ on $\Sym^3(R_8^{(1)})$.  We first observe that
the space of linear syzygies between the partial derivaties of $s$ is exactly
$\mf{g}=\ker{\Psi}$.  We note that $\mf{g}$ is a Lie subalgebra of
$\End(R_8^{(1)})$ and is stable under the action of $S_8$.
\item[(3b)] Next, using general theory developed in \S \ref{s-liesub}
concerning $G$-stable Lie subalgebras of $\End(V)$, where $V$ is a
representation of $G$, and the classification of simple Lie algebras, we show
that the only $S_8$-stable Lie subalgebras of $\End(R_8^{(1)})$ are 0,
$\mf{so}(14)$ and $\mf{sl}(14)$ (ignoring the center).  Thus $\mf{g}$ must be
one of these three Lie algebras.
\item[(3c)] Finally, we show that $\mf{so}(14)$ does not annihilate any
non-zero cubic.  As $\mf{g}$ is the annihilator of $s$ we conclude $\mf{g}=0$.
\end{enumerate}

\subsection{Relationship with \cite{HMSV}}

We now discuss the relationship between this paper and \cite{HMSV}.  The two
papers prove similar results but are logically and methodologically independent.
(Except for a few peripheral remarks in this paper that rely on \cite{HMSV}.)
In \cite{HMSV} we prove that $I_n$ is generated by quadrics for $n \ge 8$.  The
argument is inductive and uses the $n=8$ case for its base.  This base case is
already known by the work of Koike and so, strictly speaking, \cite{HMSV} does not
logically rely on the present paper.  However, Koike's proof of the $n=8$ case was
by a computer calculation.  Thus the present paper can be viewed as filling this
conceptual gap.  Together, this paper and \cite{HMSV} give a complete conceptual
proof that $I_n$ is generated by quadrics for $n \ge 8$.

As the main results of this paper and \cite{HMSV} are both concerned with
quadric generation of $I_n$ one might think that it would make more sense to
combine the two papers.  We feel that this is not the case for two reasons.
First, as we have highlighted above, the $n=8$ case has a number of special
properties not shared by the general case.  Had we combined the two papers, we
feel that these beautiful features would have been obscured in the resulting, much
larger paper.  And second, although the results of the two papers are similar,
the methods of proof are completely different.  This paper uses Lie theory and
commutative algebra while the main tools of \cite{HMSV} are toric degenerations
and combinatorics.

\subsection{Other results}

In the course of our study we have found some miscellaneous results which
do not fit into the rest of the paper.

First, Miles Reid pointed out that the secant variety of $M_8$ necessarily
lies in the cubic $s=0$, by Bezout's theorem.  The secant variety is
$11$-dimensional, as expected (this is a computer calculation), and is thus
a hypersurface in the cubic.

The second result concerns the ring $R_8$.  As stated above, elements of $R_n$
may be represented as formal sums of regular graphs on $n$ vertices.  In
particular, the matchings on $n$ points span $R_n^{(1)}$.  By embedding
the $n$ vertices into the unit circle we obtain a notion of planarity.
A theorem of Kempe states that the planar graphs give a basis for $R_n$.  We
observed (using a computer) the following result, which is particular to
the case of 8 points:

\begin{proposition}
The squares of the non-planar matchings form a basis for $R_8^{(2)}$.  This
holds over $\Z[1/2]$.
\end{proposition}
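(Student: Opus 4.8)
Both sides of the asserted equality have dimension $91$: there are $1\cdot 3\cdot 5\cdot 7 = 105$ perfect matchings on eight labelled points, of which exactly $C_4 = 14$ are non-crossing by Kempe's theorem (reviewed in \S\ref{s-review}), leaving $91$ non-planar matchings; while the Hilbert function quoted above gives $\dim_{\Q} R_8^{(2)} = f(2) = 91$. Hence it suffices to prove that the $91$ elements $\{\,m^2 : m \text{ a non-planar matching}\,\}$ of $R_8^{(2)}$ are linearly independent, or, equivalently, that they span $R_8^{(2)}$.

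The plan is to argue spanning inside the graph calculus, where an edge $\{i,j\}$ stands for the bracket $[ij]=x_iy_j-x_jy_i$, a matching is a product of four brackets, and Kempe's theorem also furnishes the basis of $R_8^{(2)}$ consisting of the $91$ planar $2$-regular graphs together with a straightening algorithm --- repeated application of the three-term Pl\"ucker relation $[ab][cd]=[ac][bd]-[ad][bc]$, whose coefficients lie in $\{0,\pm 1\}$. Two steps are needed. First, the squares of \emph{all} $105$ matchings span $R_8^{(2)}$: their span inside $\Sym^2(R_8^{(1)})$ is an $S_8$-submodule and a cyclic quotient of $\Ind_{S_2\wr S_4}^{S_8}\mathbf 1$, so a character computation comparing this induced module with $R_8^{(2)} = \Sym^2(R_8^{(1)})/I_8^{(2)}$ (using $R_8^{(1)}\cong S^{(4,4)}$ and $I_8^{(2)}\cong S^{(2,2,2,2)}$ from Theorem~\ref{mainthm}, and the fact --- to be checked --- that $\Sym^2(R_8^{(1)})$ is multiplicity free) reduces this to verifying that a single square $m^2$ has nonzero component in each isotypic piece. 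Second, one must move the square of each \emph{planar} matching into the span of the non-planar squares; squaring a Pl\"ucker relation and multiplying through by the remaining two (squared) edges of a matching yields an identity of the form
\[
(\text{planar matching})^2 = (\text{matching }ac,bd,\text{rest})^2 + (\text{matching }ad,bc,\text{rest})^2 - 2\,[ac][bd][ad][bc]\cdot(\text{rest})^2 ,
\]
from which one can solve for the mixed $2$-regular term over $\Z[1/2]$. This division by $2$ --- more generally, the polarization identities that convert products of matchings into squares --- is exactly why the statement is asserted only over $\Z[1/2]$; integrally, the lattice spanned by the non-planar squares may have index a power of $2$ in $R_8^{(2)}$.

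I expect the main obstacle to be carrying out the second step without circularity, since the recouplings above tend to regenerate the planar matching one began with. What is really wanted is an ordering of the $91$ non-planar matchings and of Kempe's $91$ planar $2$-regular graphs, together with a bijection between the two sets, making the transition matrix triangular with diagonal entries that are units of $\Z[1/2]$; equivalently, once the $105$ squares are known to form a basis of $\Sym^2(R_8^{(1)})$, it suffices to show that $I_8^{(2)}$ meets the span of the $91$ non-planar squares only in $0$, i.e.\ that a certain $14\times 14$ determinant is a unit in $\Z[1/2]$. Finding the combinatorial principle that makes this transparent is the crux --- it cannot come from representation theory alone, since the set of non-planar matchings is not $S_8$-stable --- and, in the absence of such a principle, the honest completion is to write the relevant matrix down and compute, which is what the computer verification does.
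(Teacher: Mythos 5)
The paper offers no proof of this proposition at all: it is stated purely as an empirical observation, with ``We observed (using a computer)\ldots'' doing all the work. Your proposal ends in exactly the same place, so at the level of ``what the actual proof is,'' the two agree. Along the way your framing is sound and adds value: the count $105-14=91=f(2)=\dim R_8^{(2)}$ is correct, so spanning or independence alone suffices; the observation that the set of non-planar matchings is not $S_8$-stable is exactly the reason a pure character-theoretic argument cannot decide the question (any $S_8$-submodule generated by squares of matchings is all-or-nothing on each isotypic piece of the multiplicity-free $\Sym^2 V$, and cannot single out $91$ of the $105$ matchings); and the Pl\"ucker/polarization recoupling you write down is correct but, as you say, regenerates the term you started with, so it does not obviously terminate. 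You are candid that the residual content is the invertibility over $\Z[1/2]$ of an explicit matrix, which is precisely what the authors checked by machine. In short: no genuine gap relative to the paper, because the paper itself leaves the verification to a computer; your sketch is a reasonable account of why a conceptual proof is elusive, but it should not be mistaken for a completed argument.

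One small quibble: the count of $14$ non-crossing matchings is the Catalan number $C_4$, a standard combinatorial fact, and is not what is usually called Kempe's theorem; Kempe's theorem is the statement (used elsewhere in your sketch) that the planar regular graphs of each degree form a $k$-basis of $R_n$.
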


\subsection{Acknowledgments}
Foremost we thank Igor Dolgachev, who predicted to us that
Theorem~\ref{mainthm} is true.  Without him this paper would not have
been written.  We also thank Shrawan Kumar and Riccardo Salvati Manni
for helpful comments.

\section{Review of the ring $R_L$}
\label{s-review}

In this section we give a precise definition of the ring $R_n$ and recall
some facts about how $S_n$ acts on $R_n$.  A more thorough treatment of these
topics is given in \cite{HMSV}.

Before we begin, we remark that we prefer to work as functorially as possible.
This results in  greater clarity and does not cost
much.  Thus, rather than working with an integer $n$ we work with a set $L$
of cardinality $n$.  We will therefore have a ring $R_L$ in place of $R_n$.
Also, rather than working with $\P^1$ we work with $\P{U}$ where $U$ is a
two-dimensional vector space.  For this section we work over an arbitrary
commutative base ring $k$.  In most of the remainder of the paper we will
take $k$ to be a field of characteristic $0$.

\subsection{The ring $R_L$}

Let $k$ be a commutative ring, let $L$ be a finite set of even cardinality
$n$, let $U$ be a free rank two $k$-module and let $\omega$ be a
non-degenerate symplectic form on $U$.  We are interested in the GIT quotient
$M_L=(\P{U})^L \cq \GL(U)$.
By definition, $M_L$ is $\Proj(R_L)$ where
\begin{displaymath}
R_L= \bigoplus_{d=0}^{\infty} \Gamma( (\P{U})^L, \ms{O}(d)^{\boxtimes
L})^{\GL(U)}.
\end{displaymath}
Here the action of $\GL(U)$ on the $d$th-graded piece is the usual action
twisted by the $(-d/2)$th power of the determinant.  Thus taking
$\GL(U)$-invariants is the same as taking $\SL(U)$-invariants.  We can
therefore rewrite the above formula as
\begin{equation}
\label{eq-defr}
R_L=\left[ \bigoplus_{d=0}^{\infty} (\Sym^d{U^*})^{\otimes{L}}
\right]^{\SL(U)}.
\end{equation}
Here $U^*=\Hom(U, k)$.  We take \eqref{eq-defr} as the \emph{definition} of
$R_L$ for the purposes of this paper.  Note that $R_L$ and $M_L$ depend upon
$k$ but this is absent from the notation.  We will write $(R_L)_k$ when we
want to emphasize the dependence on $k$.  Of course, $(R_L)_k=(R_L)_{\Z}
\otimes k$. We
note that the symmetric group $S_L=\Aut(L)$ acts on $R_L$ by permuting the
tensor factors.

We now take a moment to comment about tensor powers, such as the one appearing
in \eqref{eq-defr}.  For a $k$-module $V$ and a finite set $L$ we define
$V^{\otimes L}$ in the obvious manner:  it is the universal $k$-module
with a multilinear map from $\Hom(L, V)$.  We think of pure tensors in
$V^{\otimes L}$ as \emph{functions} from $L$ to $V$.  For an integer $n$,
we write $V^{\otimes n}$ for $V^{\otimes \{1, \ldots, n\}}$.  The
construction $V^{\otimes L}$ is functorial in both $V$ and $L$.

\subsection{Graphical description of $R_L$}
\label{s-graph}

Let $e=(i, j)$ be an element of $L \times L$.  Since $e$ is ordered, we have
a natural isomorphism $(U^*)^{\otimes \{i, j\}}=(U^*)^{\otimes 2}$.  We can
thus transfer the symplectic form $\omega$ on $U$, thought of as an element of
$(U^*)^{\otimes 2}$, to an element $\omega_e$ of $(U^*)^{\otimes \{i, j\}}$.
Explicitly, if we pick a symplectic basis $\{x, y\}$ of $U^*$ then $\omega_e=
x_i y_j - x_j y_i$, where $x_i y_j$ is just shorthand for the function $\{i,
j\} \to U^*$ which takes $i$ to $x$ and $j$ to $y$.  Note that $\omega_e$ is
invariant under $\SL(U)=\Sp(U)$.

We now give a description of $R_L$ in terms of graphs.  We say that a
directed graph with vertex set $L$ is \emph{regular} if each vertex has the
same valence.  This common valence is then called the \emph{degree} of the
graph.  Let $\Gamma$ be a regular directed graph of degree $d$.  We define an
element $X_{\Gamma}$ of $R_L^{(d)}$ by $X_{\Gamma}=\prod \omega_e$, the
product taken over the edges $e$ of $\Gamma$.
(These may be interpreted as Specht polynomials.)
As each $\omega_e$ is invariant
under $\SL(U)$, so is $X_{\Gamma}$.  The fact that $\Gamma$ is regular of
degree $d$ ensures that $X_{\Gamma}$ belongs to $(\Sym^d{U^*})^{\otimes L}$.

It is a fact from classical invariant theory that the $X_{\Gamma}$ span
$R_L$ as a $k$-module.  The next matter, of course, is to determine the
relations between the various $X_{\Gamma}$.  To begin with, we clearly have
$X_{\Gamma} X_{\Gamma'}=X_{\Gamma \cdot \Gamma'}$, where $\Gamma \cdot \Gamma'$
denotes the graph on $L$ whose edge set is the union of those of $\Gamma$ and
$\Gamma'$.  We then have the following easily verified relations:
\begin{itemize}
\item (Sign relation.) $X_{\Gamma}=-X_{\Gamma'}$ if $\Gamma'$ is
obtained from $\Gamma$ by reversing the direction of a single edge.
\item (Loop relation.) $X_{\Gamma}=0$ if $\Gamma$ contains a loop.
\item (Pl\"ucker relation, see Fig.~\ref{fig:pluck}.)  Let $\Gamma$ be a
regular directed graph and let $(a, b)$ and $(c, d)$ be two edges of $\Gamma$.
Let $\Gamma'$ (resp.\ $\Gamma''$) be the graph obtained by replacing these two
edges with the edges $(a, d)$ and $(c, b)$ (resp.\ $(a, c)$ and $(b, d)$).
Then
\begin{displaymath}
X_{\Gamma}=X_{\Gamma'}+X_{\Gamma''}.
\end{displaymath}
\end{itemize}
It is now a second fact from classical invariant theory that these three
types of relations generate all the relations amongst the $X_{\Gamma}$.  (The
loop relation is implied by the sign relation if 2 is invertible in $k$.)

\begin{figure}
\begin{displaymath}
\begin{xy}
(0, -6)*{\ss a}; (0, 6)*{\ss d}; (8, -6)*{\ss c}; (8, 6)*{\ss b};
(0, -4)*{}="A"; (0, 4)*{}="B"; (8, -4)*{}="C"; (8, 4)*{}="D";
{\ar "A"; "D"}; {\ar "C"; "B"};
\end{xy}
\qquad = \qquad
\begin{xy}
(0, -6)*{\ss a}; (0, 6)*{\ss d}; (8, -6)*{\ss c}; (8, 6)*{\ss b};
(0, -4)*{}="A"; (0, 4)*{}="B"; (8, -4)*{}="C"; (8, 4)*{}="D";
{\ar "A"; "B"}; {\ar "C"; "D"};
\end{xy}
\qquad + \qquad
\begin{xy}
(0, -6)*{\ss a}; (0, 6)*{\ss d}; (8, -6)*{\ss c}; (8, 6)*{\ss b};
(0, -4)*{}="A"; (0, 4)*{}="B"; (8, -4)*{}="C"; (8, 4)*{}="D";
{\ar "A"; "C"}; {\ar "D"; "B"};
\end{xy}
\end{displaymath}
\caption{The Pl\"ucker relation.}
\label{fig:pluck}
\end{figure}
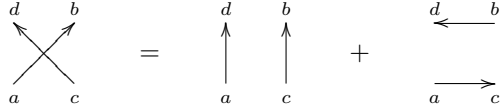

To be a bit more precise, let $\wt{R}_L$ be the free graded $k$-module with
basis $\{ \wt{X}_{\Gamma} \}$ as $\Gamma$ varies over directed regular graphs
on $L$.  The grade of $\wt{X}_{\Gamma}$ is the degree of $\Gamma$.  We turn
$\wt{R}_L$ into a ring by defining $\wt{X}_{\Gamma} \wt{X}_{\Gamma'}=
\wt{X}_{\Gamma \cdot \Gamma'}$.  We then have a map $\wt{R}_L \to R_L$ given
by $\wt{X}_L \mapsto X_L$.  The two theorems of classical invariant theory
referred to above about amount to the assertion that this map is surjective
and the kernel is generated by the sign, loop and Pl\"ucker relations.

\subsection{Facts needed about $R_L$}

We now recall some of its properties of $R_L$ that will be
relevant to us.  The first and perhaps most important is the
following (for a proof, see \cite{HMSV} or  \cite{hmsv1}):

\begin{proposition}[Kempe]
The ring $R_L$ is generated as a $k$-algebra by its degree one piece.
\end{proposition}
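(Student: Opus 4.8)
The plan is to prove that $R_L$ is generated in degree one by reducing, via the graphical description, to a purely combinatorial statement about directed regular graphs: every directed regular graph $\Gamma$ of degree $d$ can be written, modulo the sign, loop and Pl\"ucker relations, as a $k$-linear combination of products of $d$ matchings on $L$. Since $R_L$ is spanned by the $X_\Gamma$ and multiplication of graphs corresponds to edge-union, it suffices to show that the class $[X_\Gamma]$ lies in the subalgebra generated by $R_L^{(1)}$; and $R_L^{(1)}$ is spanned by the $X_M$ for $M$ a (directed) matching, since a regular graph of degree $1$ on a set of even size is exactly a perfect matching. So the theorem is equivalent to: the $\Z$-module spanned by degree-$d$ regular graphs, modulo the three relations, is spanned by products of $d$ matchings.

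First I would handle the case where $\Gamma$ is a disjoint union of cycles (a $2$-regular graph): an even cycle decomposes into two alternating matchings on its vertex set, so if all cycles were even we would be done; the obstruction is odd cycles. Here I would use the Pl\"ucker relation to surger two odd cycles together (or split off a short even piece), trading a configuration containing odd cycles for configurations with strictly fewer, or with strictly shorter, odd cycles, together with a term that is handled by induction on some complexity measure (e.g. total number of odd cycles, then total length). The loop and sign relations take care of degenerate outcomes of these surgeries (an edge joining a vertex to itself, or a reversed edge). For general degree $d$, the key step is to peel off a single matching: given a $d$-regular graph $\Gamma$ with $d \ge 2$, I would show that $[X_\Gamma]$ equals a combination of terms $[X_{M} \cdot X_{\Gamma'}]$ where $M$ is a matching and $\Gamma'$ is $(d-1)$-regular, then induct on $d$. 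Extracting such an $M$ amounts to finding, after applying Pl\"ucker moves, a perfect matching inside $\Gamma$ whose removal leaves a regular graph — essentially a parity-corrected version of the fact that a regular graph contains a perfect matching (a $1$-factor), which for bipartite-type situations is K\"onig's theorem but in general requires Pl\"ucker moves to repair obstructions.

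The main obstacle is precisely this combinatorial $1$-factorization argument: regular graphs need not literally decompose into matchings (the Petersen graph is the classical counterexample), so one genuinely needs the Pl\"ucker relation to do the work, and setting up the right induction — choosing the complexity measure on $\Gamma$ so that every Pl\"ucker surgery strictly decreases it while producing only graphs that are again regular (up to loop/sign cleanup) — is the delicate point. I would organize this as a lemma about $\wt R_L$ stated in terms of graphs alone, proved by a double induction on $d$ and on the number of odd components, and then transport it to $R_L$ using the surjection $\wt R_L \to R_L$ whose kernel is generated by the three relations (the two classical-invariant-theory facts quoted above). Since the detailed surgery bookkeeping is routine once the induction scheme is fixed, I would only spell out the inductive step for the $2$-regular case in full and indicate how the general case follows by peeling off one matching at a time; alternatively, one may simply cite the proof in \cite{hmsv1} or \cite{HMSV}, as the excerpt already does.
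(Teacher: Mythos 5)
The paper does not actually prove this proposition: immediately after stating it, it says ``for a proof, see \cite{HMSV} or \cite{hmsv1},'' so there is no in-paper argument to compare your sketch against, and your closing option of simply citing \cite{hmsv1} matches what the paper does. Your translation of the statement into the assertion that, modulo the sign, loop and Pl\"ucker relations, every $d$-regular $X_\Gamma$ is a $\Z$-combination of products of $d$ matchings is exactly the right reduction, and your $2$-regular case works cleanly and needs less machinery than you suggest: since $|L|$ is even there is an even number of odd cycles, and a single Pl\"ucker move on one edge from each of two odd cycles merges them --- in \emph{both} resulting terms --- into a single even cycle, which then splits into two alternating matchings. So the number of odd cycles drops by two at each step; no secondary induction on total length is needed in degree two.

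The genuine gap is the step you dismiss as ``routine bookkeeping once the induction scheme is fixed,'' namely peeling a matching off a $d$-regular graph for $d>2$. You rightly note that $d$-regular multigraphs need not $1$-factor, but the difficulty is worse than that: it is not even enough to first drive the crossing number to zero. A non-crossing $d$-regular multigraph can still fail to have any perfect matching whose removal leaves a $(d-1)$-regular graph. For instance, on $\{1,\dots,6\}$ take the two (non-crossing) triangles on $\{1,2,3\}$ and $\{4,5,6\}$ together with the matching $\{(1,2),(3,4),(5,6)\}$: this is a non-crossing $3$-regular multigraph whose only perfect matching is $\{(1,2),(3,4),(5,6)\}$, and removing it leaves two triangles, which certainly do not split into two matchings. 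One is forced to apply Pl\"ucker to a \emph{non-crossing} pair of long chords such as $(1,3)$ and $(4,6)$, and then one of the two resulting terms has strictly \emph{larger} crossing number, so ``crossing number'' is not a monovariant for these surgeries, and ``number of odd components'' is not well-defined for $d>2$. Identifying a weight function on regular graphs that is strictly decreased by a suitable combination of crossing-reducing and odd-blob-merging Pl\"ucker moves is precisely the content of Kempe's argument and is the part your sketch leaves open. If you want a self-contained route, one honest shortcut is to note that Petersen's $2$-factor theorem gives a literal decomposition of any $2k$-regular multigraph into $k$ $2$-factors, reducing even $d$ to your degree-two case with no Pl\"ucker moves at all; the odd-$d$ case, where the $1$-factor obstruction genuinely bites, is then the part that still needs a real argument (or the citation you already have).
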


We emphasize that this holds for all $k$, or equivalently, for $k=\Z$.
We remark that Kempe proved another theorem:  if one fixes an embedding of
$L$ into the unit circle, so that one can make sense of what it means for
a graph to be planar, then the $X_{\Gamma}$ with $\Gamma$ planar form an
basis of $R_L$ as a $k$-module.  Thus, for instance, one can count planar
graphs to determine the Hilbert function of $R_L$.

We write $V_L$ for the first graded piece of $R_L$.  It is spanned by regular
graphs of degree one.  We call such graphs \emph{matchings}.  Kempe's theorem
says that the map $\Sym(V_L) \to R_L$ is surjective.  We let $I_L$ be the
kernel of this map.  We call $I_L$ the \emph{ideal of relations}.  The present
paper is concerned with finding generators for $I_L$ when $L$ has cardinality
eight.

We need to recall some facts about how $V_L$ and some related spaces decompose
under the symmetric group $S_L =\Aut(L)$.  For simplicity, we now take $k$ to be a
field of characteristic zero, although analogues of the statements remain true so
long as $n!$ is invertible in $k$.  Recall that the irreducible representations of
$S_L$ over $k$ correspond to Young diagrams, or partitions.  The representation
$V_L$ of $S_L$ is irreducible and corresponds to the Young diagram with two rows
and $n/2$ columns.  The result we need is the following:

\begin{proposition}
\label{prop:decomp}
Assume $k$ is a field of characteristic zero.  In the following table, each
$S_L$-module is multiplicity free.  The set of irreducibles it contains
corresponds to the given set of partitions.
\begin{center}
\rm
\begin{tabular}{c|c}
$S_L$-module & Set of partitions of $n$ \\[.5ex]
\hline \\[-2ex]
$\Sym^2(V_L)$ & at most four parts, all even \\[1ex]
$\bw{V_L}$ & exactly four parts, all odd \\[1ex]
$V_L^{\otimes 2}$ & union of previous two sets \\[1ex]
$I_L^{(2)}$ & exactly four parts, all even
\end{tabular}\end{center}
\end{proposition}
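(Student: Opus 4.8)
The plan is to exhibit each of the four modules as a space of invariants for a product of copies of $\SL(U)$ on a tensor power of a small vector space, and then to extract the irreducible constituents using Schur--Weyl duality together with the classical branching rule from $\GL_N$ to $\Orth_N$. Write $S^\mu$ for the irreducible $S_L$-module attached to a partition $\mu$ (not to be confused with the symmetric group $S_L$), and set $m=n/2$. First I would note that $V_L=[(U^*)^{\otimes L}]^{\SL(U)}$ straight from the definition of $R_L$ in degree $1$; Schur--Weyl duality for $\GL(U)=\GL_2$ gives $(U^*)^{\otimes L}=\bigoplus_\mu S^\mu_{\GL_2}(U^*)\otimes S^\mu$ over $\mu$ with at most two rows, and $S^\mu_{\GL_2}(U^*)$ has a one-dimensional space of $\SL(U)$-invariants exactly when $\mu=(p,p)$, so $V_L=S^{(m,m)}$ --- the irreducible already named in the text.

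Next I would pass to the tensor square. Let $U'$ be a second symplectic plane with a fixed symplectic isomorphism $U\cong U'$, and put $W=U^*\otimes U'^*$: a four-dimensional $\SL(U)\times\SL(U')$-module carrying the nondegenerate \emph{symmetric} form $\omega\otimes\omega'$ (symmetric because $\omega$ and $\omega'$ are each skew). Regrouping tensor factors identifies $(U^*)^{\otimes L}\otimes(U'^*)^{\otimes L}$ with $W^{\otimes L}$, $S_L$-equivariantly for the diagonal action, so as $S_L$-modules $V_L\otimes V_L=[W^{\otimes L}]^{\SL(U)\times\SL(U')}$, with $S_L$ now acting by the Kronecker product. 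As $\SL(U)\times\SL(U')=\mathrm{Spin}_4$ surjects onto $\SO(W)=\SO_4$ with kernel acting trivially on $W$, this is $[W^{\otimes L}]^{\SO_4}$. The key point is that, under this identification, the swap of the two tensor factors of $V_L\otimes V_L$ is induced by the orthogonal involution $\tau$ of $W$ interchanging the two factors: $\tau$ has $(+1)$-eigenspace $\Sym^2U^*$ and $(-1)$-eigenspace $\bw{U^*}$, so $\det\tau=-1$, i.e.\ $\tau\in\Orth_4\setminus\SO_4$ and $\Orth_4=\langle\SO_4,\tau\rangle$. Consequently $\Sym^2V_L$ is the $\tau$-fixed part of $[W^{\otimes L}]^{\SO_4}$, namely $[W^{\otimes L}]^{\Orth_4}$, while $\bw{V_L}$ is the $\det$-isotypic part, $[(\det\nolimits_{\Orth_4})\otimes W^{\otimes L}]^{\Orth_4}$.

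Then I would apply Schur--Weyl duality for $\GL(W)=\GL_4$, $W^{\otimes L}=\bigoplus_\mu S^\mu_{\GL_4}(W)\otimes S^\mu$ over $\mu\vdash n$ with at most four rows, and, since $\Orth_4$ commutes with $S_L$, read off $\Sym^2V_L=\bigoplus_\mu[S^\mu_{\GL_4}(W):\mathbf 1]\,S^\mu$ and $\bw{V_L}=\bigoplus_\mu[S^\mu_{\GL_4}(W):\det\nolimits_{\Orth_4}]\,S^\mu$. The $\GL_N\downarrow\Orth_N$ branching rule says that, for every $\mu$ with at most $N$ rows, the multiplicity of the trivial $\Orth_N$-module in $S^\mu_{\GL_N}(\C^N)$ is $1$ if all parts of $\mu$ are even and $0$ otherwise; twisting by $\det\nolimits_{\GL_N}$ (which restricts to $\det\nolimits_{\Orth_N}$ and carries $S^\mu_{\GL_N}$ to $S^{\mu+(1^N)}_{\GL_N}$) then gives multiplicity $1$ of $\det\nolimits_{\Orth_N}$ exactly when $\mu$ has precisely $N$ parts, all odd. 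With $N=4$ this is the first two rows of the table; every multiplicity is $0$ or $1$, so both modules are multiplicity free, and the third row is immediate from $V_L^{\otimes2}=\Sym^2V_L\oplus\bw{V_L}$ (the two partition sets being disjoint).

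Finally I would treat $R_L^{(2)}$ the same way and subtract. Here $R_L^{(2)}=[(\Sym^2U^*)^{\otimes L}]^{\SL(U)}$, and the three-dimensional $W_3=\Sym^2U^*$ carries an $\SL(U)$-invariant nondegenerate symmetric form, so $\SL(U)$ surjects onto $\SO(W_3)=\SO_3$ (kernel acting trivially); since $\Orth_3=\SO_3\times\{\pm\mathrm{id}\}$ with $-\mathrm{id}$ acting on $W_3^{\otimes L}$ by $(-1)^n=1$, we get $R_L^{(2)}=[W_3^{\otimes L}]^{\Orth_3}=\bigoplus_\mu[S^\mu_{\GL_3}(W_3):\mathbf 1]\,S^\mu$, the sum of the $S^\mu$ with $\mu$ having at most three parts, all even. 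As $R_L$ is generated in degree $1$ (Kempe), $\Sym^2V_L\to R_L^{(2)}$ is onto; in characteristic zero $I_L^{(2)}$, its kernel, is an $S_L$-complement to $R_L^{(2)}$ inside $\Sym^2V_L$, and subtracting the two decompositions leaves exactly the $S^\mu$ with $\mu$ having four parts, all even --- the fourth row. I expect the main obstacle to be nothing conceptual but the careful use of the $\GL_N\downarrow\Orth_N$ branching rule: the clean ``$0$ or $1$'' statement must be applied for \emph{all} shapes with at most $N$ rows, including ones well outside the stable range where the naive Littlewood rule fails, so one must either run the modification rules or --- more comfortably --- quote the $\Orth_N$-invariant form of Howe duality (equivalently, the $\GL$-decomposition of the coordinate ring of symmetric matrices of rank $\le N$), and the tallest even shapes (for instance $(2,2,2,2)$, where $S^{(2^4)}_{\GL_4}(W)=(\det W)^{\otimes2}$ restricts to a single trivial $\Orth_4$-module) deserve a hand check.
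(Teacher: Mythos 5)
Your argument is correct, and it supplies a proof where this paper gives none: the text simply says the proposition ``is proved in \cite{HMSV}'' and that the $n=8$ case can be checked by hand or computer. Your route --- realize $V_L$, $V_L^{\otimes 2}$ and $R_L^{(2)}$ as $\SL_2$-, $\Orth_4$- and $\Orth_3$-invariants in $(U^*)^{\otimes L}$, $W^{\otimes L}$ and $W_3^{\otimes L}$ respectively, identify the swap on $V_L\otimes V_L$ with an element of $\Orth_4\setminus\SO_4$, then convert everything via Schur--Weyl duality into the $\GL_N\downarrow\Orth_N$ branching problem for the trivial and $\det$ characters --- is a genuine conceptual proof and works uniformly in $n$, which is more than the paper needs and more than it offers. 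One clarification you could make: the branching fact you invoke (the trivial $\Orth_N$-module occurs in $S^\mu(\C^N)$ with multiplicity $1$ if all parts of $\mu$ are even, $0$ otherwise, for \emph{every} $\mu$ with at most $N$ rows) is not a stable-range statement needing modification rules; it is unconditionally clean, being the Cauchy-identity/FFT reformulation that $\Sym(\C^N\otimes\C^M)^{\Orth_N}$ is the coordinate ring of rank-$\le N$ symmetric $M\times M$ matrices, whose $\GL_M$-decomposition is $\bigoplus_{\ell(\lambda)\le N} S^{2\lambda}(\C^M)$. The $\det$-twist then gives the exactly-$N$-odd-parts condition exactly as you say, so the closing hand-check is reassuring but not logically required. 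Finally, for the last row you implicitly use that since each irreducible occurs once in $\Sym^2 V_L$ and at most once in $R_L^{(2)}$, the surjection $\Sym^2 V_L\twoheadrightarrow R_L^{(2)}$ must be nonzero on every shared constituent, so the kernel $I_L^{(2)}$ is precisely the complementary set of shapes --- worth saying explicitly, but it is the right reason.
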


This proposition is proved in \cite{HMSV}.  However, we will only need this
result in the case $n=8$, where it can easily be checked by computer or even by
hand.

\section{The skew-invariant cubic}

\noindent
{\it Until the final section of the paper, we take $k$ to be a field of
characteristic zero.}

\vskip 1ex

In this section we prove the existence and uniqueness of the skew-invariant
cubic $s$ of Theorem~\ref{mainthm}, as well as establishing the
formula for it in terms of graphs.  We fix once and for all a set $L$ of
cardinality eight and write $V$, \etc, in place of $V_L$, \etc\;  We 
often
write $G$ in place of $S_L=\Aut(L)$.

\begin{proposition}
\label{prop-s}
The space of skew-invariants in $\Sym^3(V)$ is one-dimensional.  It is spanned
by the skew-average of the cube of any matching.
\end{proposition}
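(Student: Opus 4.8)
The plan is to compute the multiplicity of the sign representation in $\Sym^3(V)$ and show it equals $1$, and then to exhibit a concrete nonzero skew-invariant, namely the skew-average $\sum_{\sigma \in G} \sgn(\sigma)\, \sigma \cdot X_M^3$ of the cube of a matching $M$, which will automatically span the one-dimensional space once we know the dimension is one. Since the skew-invariants in $\Sym^3(V)$ are $\Hom_G(\sgn, \Sym^3(V))$, the first task is the character-theoretic count. First I would decompose $\Sym^3(V)$. Using the plethysm $\Sym^3(V) \oplus (\Sym^2(V) \otimes \wedge^2 V \ominus \wedge^3 V) \cong \ldots$ — more concretely, I would use $V^{\otimes 3} = \Sym^3(V) \oplus 2\,\bS_{(2,1)}(V) \oplus \wedge^3 V$ and compute the $S_L$-structure of each summand, but since we only need the sign-isotypic part it is cleanest to work directly: $\dim \Hom_G(\sgn, \Sym^3 V)$ can be read off from the knowledge (Proposition~\ref{prop:decomp}) that $\Sym^2(V)$ is multiplicity-free with constituents indexed by partitions of $8$ into at most four even parts. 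Tensoring the $S_L$-decomposition of $\Sym^2(V)$ with $V$ (whose partition is $(4,4)$) and projecting onto $\Sym^3(V)$, the sign representation $(1^8)$ appears in $\Sym^3(V)$ with multiplicity equal to the multiplicity of $(4,4)$ in $\Sym^2(V) \otimes \sgn$, i.e. of the conjugate partition of $(4,4)$, namely $(2,2,2,2)$, inside $\Sym^2(V)$ — and $(2,2,2,2)$ has four even parts, so it does occur, once. This gives multiplicity exactly $1$ after checking the projection $V^{\otimes 3} \to \Sym^3 V$ does not kill it (it cannot, since the $(1^8)$-component of $V \otimes \Sym^2 V$ lies in the symmetric part for degree reasons combined with the fact that $\wedge^3 V$ and $\bS_{(2,1)}V$ have no sign component — each of the latter statements again reduces to a constituent count using Proposition~\ref{prop:decomp} and Pieri).

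Having established that $\dim \Hom_G(\sgn, \Sym^3 V) = 1$, I would show the exhibited element is nonzero. Fix a matching $M$ and let $s = \sum_{\sigma \in G} \sgn(\sigma)\, \sigma\cdot X_M^3 \in \Sym^3(V)$. This is manifestly skew-invariant. To see it is nonzero it suffices to pair it against a suitable functional, or equivalently to observe that the coefficient of a particular monomial — say the monomial $X_{M}^{3}$ itself, or rather the monomial $X_{M_1}X_{M_2}X_{M_3}$ for three matchings in general position — is a nonzero signed count. Cleanest: the orbit-sum is nonzero because the stabilizer of $X_M$ in $G$ (the subgroup preserving the matching $M$, which is the wreath product $S_2 \wr S_4$ followed by possible edge reversals) contains no odd permutation acting by $-1$ on $X_M$ that would force cancellation; more precisely, $\sgn$ restricted to $\stab(X_M)$ is either trivial or not, and one checks the relevant sign characters match so that the average survives. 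Equivalently, and this is the slick route: since the space of skew-invariants is one-dimensional and the projection of $X_M^3$ onto it is $\frac{1}{|G|}s$, we need $X_M^3$ to have nonzero skew-component; but if it did not, then by $G$-translation no $X_{M'}^3$ would, and hence $s=0$ for every choice, contradicting that the (nonzero) sign-isotypic subspace of $\Sym^3 V$ is spanned by such projections — so at least one matching works, and then by transitivity of $G$ on matchings of the same combinatorial type all of them do.

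The main obstacle is the multiplicity computation, specifically verifying that the sign representation occurs in $\Sym^3(V)$ with multiplicity exactly one rather than zero or two: the "at least one" direction follows from the Pieri/LR argument sketched above, but the "at most one" direction needs care because $(2,2,2,2)$ could a priori contribute to $\Sym^3 V$ through more than one path in $V^{\otimes 3}$, and one must rule out a contribution from the $\bS_{(2,1)}(V)$ summands. I expect this to come down to showing $\Hom_G(\sgn, \bS_{(2,1)}(V)) = 0$, which should follow from the parity constraints in Proposition~\ref{prop:decomp} (the constituents of $\wedge^2 V$ have partitions with four odd parts, so tensoring with $V=(4,4)$ and intersecting with the relevant Schur functor avoids $(1^8)$). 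An alternative that sidesteps plethysm bookkeeping entirely is to compute $\dim \Hom_G(\sgn, \Sym^3 V)$ numerically via the character formula $\frac{1}{|G|}\sum_{\sigma} \sgn(\sigma)\,\chi_{\Sym^3 V}(\sigma)$, where $\chi_{\Sym^3 V}(\sigma) = \frac{1}{6}\bigl(\chi_V(\sigma)^3 + 3\chi_V(\sigma)\chi_V(\sigma^2) + 2\chi_V(\sigma^3)\bigr)$ and $\chi_V$ is the known character of the $(4,4)$-representation of $S_8$; this is a finite, mechanical sum over the $22$ conjugacy classes of $S_8$ and yields $1$, giving the uniqueness unconditionally while the explicit matching-cube formula gives existence.
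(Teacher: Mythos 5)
Your proof takes a genuinely different route from the paper's. The paper avoids plethysm and character theory entirely, instead exploiting the coincidence that $W=U^{\otimes 3}$ is an eight-dimensional symplectic space and $|L|=8$: it identifies $\Sym^3 V$ with $(((W^*)^{\otimes L})^{\SL(U)^3})_{S_3}$, observes that the $S_L$-skew-invariants in $(W^*)^{\otimes L}$ are spanned by the top power $\eta^{L/2}$ of the symplectic form (which is moreover $\SL(W)$-invariant, hence $\SL(U)^3\rtimes S_3$-invariant), and then expands $\eta^{L/2}$ over matchings to get $s=\sum_{\Gamma\in\mc{M}_L}\sgn(\Gamma)X_\Gamma^3$ at a stroke. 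Uniqueness, nonvanishing, and the explicit graph formula all fall out simultaneously, and the same argument makes visible why $s_n=0$ for $n>8$. Your route --- computing $\dim\Hom_G(\sgn,\Sym^3 V)$ --- is the more standard one, and your multiplicity count is correct: $\dim\Hom_G(\sgn,V\otimes\Sym^2 V)$ equals the multiplicity of $(2,2,2,2)$ (the sign-twist of $(4,4)$) in $\Sym^2 V$, which is $1$ by Proposition~\ref{prop:decomp}, while the kernel $\bS_{(2,1)}(V)$ of $V\otimes\Sym^2 V\twoheadrightarrow\Sym^3 V$ contributes nothing because $\bS_{(2,1)}(V)$ embeds in $V\otimes\bw V$ and every constituent of $\bw V$ has all parts odd. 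What you lose relative to the paper is that the explicit formula for $s$ does not come for free.

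Your nonvanishing argument, however, has a real gap: the ``slick route'' is circular. You derive a contradiction from ``the sign-isotypic subspace of $\Sym^3 V$ is spanned by such projections,'' but that the one-dimensional sign component lies inside the span of the $105$ cubes $X_M^3$ (a subspace of dimension at most $105$ in the $560$-dimensional $\Sym^3 V$) is precisely what you are trying to establish; a priori it could sit in a $G$-stable complement of that span. The stabilizer route you sketch first is the one that works, and you should actually carry it out: for $H=\stab_{S_8}(M)\cong S_2\wr S_4$ one has $h\cdot X_M=\epsilon(h)X_M$, where $\epsilon(h)$ is the parity of edges reversed by $h$, so cancellation over $H$ is avoided exactly when $\sgn|_H=\epsilon$, and this is checked on generators --- a transposition of the two endpoints of one edge has $\sgn=\epsilon=-1$, a double transposition swapping two edges has $\sgn=\epsilon=+1$. (The paper itself is terse on this final point, remarking only that the skew-average equals $\alpha s$ with $\alpha\mid 8!$; the difference is that the paper already has the nonzero $s$ in hand from $\eta^{L/2}$, whereas your route must produce a nonzero skew-invariant from scratch.)
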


The proof of Proposition~\ref{prop-s} is elementary linear
algebra.  However, to state it correctly we need some preparation.  For
now we allow $L$ to be any finite set of even cardinality $n$.  We write
$\mc{M}_L$ for the set of directed matchings on $L$.  The symmetric group
$S_L$ acts transitively on $\mc{M}_L$.  The alternating group $A_L$ clearly
does not act transitively, and thus has exactly two orbits.  We fix a
bijection
\begin{displaymath}
\sgn:\mc{M}_L/A_L \to \{ \pm 1\}.
\end{displaymath}
Thus for each matching $\Gamma$ we have a sign $\sgn{\Gamma}$, and for
$\sigma \in S_L$ we have $\sgn(\sigma \Gamma)=\sgn(\sigma) \sgn(\Gamma)$.

Now let $W$ be a $k$-vector space of dimension $n$ and let $\eta$ be a
non-degenerate symplectic form on $W$.  For an ordered pair $e=(i, j)$ in
$L \times L$ we define $\eta_e \in (W^*)^{\otimes \{i, j\}}$ as we defined
$\omega_e$ in \S \ref{s-graph}.  For a matching $\Gamma$ on $L$ we define
$\eta_{\Gamma}$ as the product $\prod \eta_e$, taken over the edges of
$\Gamma$.  Finally, we define $\eta^{L/2} \in (W^*)^{\otimes L}$ by the
formula
\begin{displaymath}
\eta^{L/2}=\sum_{\Gamma \in \mc{M}_L} \sgn(\Gamma) \eta_{\Gamma}.
\end{displaymath}
Clearly $\eta^{L/2}$ is skew-invariant for the action of $S_L$.

\begin{lemma}
\label{threetimes}
The space of $S_L$-skew-invariants in $(W^*)^{\otimes L}$ is one-dimensional
and spanned by $\eta^{L/2}$.  The group $\SL(W)$ leaves $\eta^{L/2}$
invariant.
\end{lemma}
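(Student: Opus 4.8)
The plan is to pin down the dimension of the space of $S_L$-skew-invariants by an elementary antisymmetrization argument, verify that $\eta^{L/2}$ is a nonzero element of it, and then extract the $\SL(W)$-invariance purely formally from one-dimensionality.

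First I would observe that the $S_L$-action (permuting tensor slots) and the $\GL(W)$-action (diagonal) on $(W^*)^{\otimes L}$ commute. Since $\chr k = 0$, the subspace $N$ of $S_L$-skew-invariants is exactly the image of the idempotent $e_- = \tfrac{1}{n!}\sum_{\sigma \in S_L}\sgn(\sigma)\,\sigma$, i.e.\ the subspace of antisymmetric tensors. Choosing a bijection $L \cong \{1,\dots,n\}$, antisymmetrization of pure tensors $f_1 \otimes \cdots \otimes f_n \mapsto f_1 \wedge \cdots \wedge f_n$ identifies $N$ canonically with $\bigwedge^n W^*$; as $\dim W^* = n$, this space is one-dimensional. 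So the first assertion of the lemma reduces to showing $\eta^{L/2} \neq 0$.

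Since $\eta^{L/2}$ lies in $N$ by the skew-invariance noted just before the statement, I would next show it is nonzero by evaluating the multilinear form $\eta^{L/2}$ on a Darboux basis $b_1,\dots,b_n$ of $W$, normalized so that $\eta(b_{2k-1},b_{2k}) = 1$ and $\eta(b_i,b_j) = 0$ for all other $i < j$. A directed matching $\Gamma$ contributes $\sgn(\Gamma)\prod_{(i,j)\in\Gamma}\eta(b_i,b_j)$, and this term vanishes unless the underlying undirected matching of $\Gamma$ is $\{\{1,2\},\dots,\{n-1,n\}\}$. For such $\Gamma$, reversing the orientation of a single edge amounts to acting by the transposition of its two endpoints, which is odd; this multiplies $\sgn(\Gamma)$ by $-1$, and by antisymmetry of $\eta$ it also multiplies the product by $-1$. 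Hence the contribution is the same for all $2^{n/2}$ orientations, giving $\eta^{L/2}(b_1,\dots,b_n) = \pm\, 2^{n/2} \neq 0$. Thus $\eta^{L/2}$ spans the line $N$. (Alternatively one can check directly that $e_-\eta_\Gamma = \tfrac{(n/2)!}{n!}\sgn(\Gamma)\,\eta^{L/2}$ for every matching $\Gamma$ and then verify this is nonzero, but the evaluation above is cleaner.)

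Finally, since the $S_L$- and $\SL(W)$-actions commute, $\SL(W)$ preserves the isotypic piece $N$ and therefore acts on the line $N$ through an algebraic character $\SL(W) \to \G_m$. As $\SL(W)$ is perfect (it equals its own commutator subgroup), the only such character is trivial, so $\SL(W)$ fixes $\eta^{L/2}$; equivalently, under $N \cong \bigwedge^n W^*$ the group $\SL(W)$ acts by $(\det)^{-1} = 1$. I do not expect a genuine obstacle in this argument; the one delicate point is the sign bookkeeping in the nonvanishing computation — the compatibility between reversing a directed edge of a matching, the induced transposition in $S_L$, and the chosen normalization of $\sgn$ on $\mc{M}_L/A_L$.
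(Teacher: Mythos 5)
Your proof is correct. The paper disposes of this lemma in one sentence---``This is the usual method of building a volume form out of a symplectic form''---appealing to the standard Pfaffian construction, and your argument is precisely the unwound version of that: identify the skew-isotypic piece of $(W^*)^{\otimes L}$ with $\bigwedge^n W^*$, verify via a Darboux basis that $\eta^{L/2}$ is a nonzero multiple of $\eta^{\wedge (n/2)}$ (your sign bookkeeping is right: reversing a directed edge is an odd permutation of $L$ that simultaneously flips the factor $\eta(b_i,b_j)$, so the two signs cancel and all $2^{n/2}$ orientations contribute equally), and extract $\SL(W)$-invariance from the triviality of characters of $\SL(W)$, or equivalently $\det^{-1}|_{\SL(W)}=1$.
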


\begin{proof}
This is the usual method of building a volume form out of a symplectic
form.
\end{proof}

We now prove the proposition.  We return to our original notation.

\begin{proof}[Proof of Proposition~\ref{prop-s}]
Recall that $V=((U^*)^{\otimes L})^{\SL(U)}$.  We thus have
\begin{displaymath}
\Sym^3{V}
=(V^{\otimes 3})_{S_3}
=((((U^*)^{\otimes L})^{\SL(U)})^{\otimes 3})_{S_3}
=((((U^*)^{\otimes L})^{\otimes 3})^{\SL(U)^3})_{S_3}
=((((U^*)^{\otimes 3})^{\otimes L})^{\SL(U)^3})_{S_3}.
\end{displaymath}
Here the subscript $S_3$ denotes co-invariants by the the symmetric group
$S_3$.  (Symmetric powers are most naturally defined by taking
co-invariants, not invariants.)  Putting $W=U^{\otimes 3}$, we may write
this formula as
\begin{displaymath}
\Sym^3{V}=(((W^*)^{\otimes L})^{\SL(U)^3})_{S_3}.
\end{displaymath}
Note that $\SL(U)^3 \rtimes S_3$ acts on $W$ and the action used in the above
formula is the natural one.  Furthermore, the action of $G=S_L$ given by
permuting factors commutes with $\SL(U)^3 \rtimes S_3$.

The space $W$ has a natural symplectic form, namely $\eta=\omega^{\otimes 3}$.
The group $\SL(U)^3 \rtimes S_3$ clearly preserves this form, and thus the map
$\SL(U)^3 \rtimes S_3 \to \GL(W)$ lands in $\Sp(W) \subset \SL(W)$.  We
now appeal to the lemma (note $W$ is eight-dimensional and $L$ has
cardinality eight).  We see that $\eta^{L/2}$ is non-zero and spans the space
of skew-invariants in $(W^*)^{\otimes L}$.  Furthermore, the group
$\SL(U)^3 \rtimes S_3$ leaves $\eta^{L/2}$ invariant.  We have thus shown
that the space of skew-invariants in
\begin{displaymath}
(((W^*)^{\otimes L})^{\SL(U)^3})^{S_3}
\end{displaymath}
is one-dimensional and spanned by $\eta^{L/2}$.  Since we are not in
characteristic 2 or 3, the map
\begin{displaymath}
(((W^*)^{\otimes L})^{\SL(U)^3})^{S_3} \to
(((W^*)^{\otimes L})^{\SL(U)^3})_{S_3} = \Sym^3{V}
\end{displaymath}
is an isomorphism.  Thus the space of skew-invariants in $\Sym^3{V}$ is
one-dimensional and spanned by the image $s$ of $\eta^{L/2}$.

We now wish to express the skew-invariant $s$ in terms of graphs.  We have,
by definition,
\begin{displaymath}
\eta^{L/2}=\sum_{\Gamma \in \mc{M}_L} \sgn(\Gamma) \eta_{\Gamma}.
\end{displaymath}
Now, $\eta_{\Gamma}$ is equal to $\prod \omega^{\otimes 3}_e$.  The image
of this in $\Sym^3{V}$ is just $\prod \omega^3_e$, which is, by definition,
$X_{\Gamma}^3$.  We thus have
\begin{displaymath}
s=\sum_{\Gamma \in \mc{M}_L} \sgn(\Gamma) X_{\Gamma}^3.
\end{displaymath}
(The skew-average of a cube of a matching
is equal to $\alpha s$ for some $\alpha \; \mid \; 8!$.)  This completes
the proof of the proposition.
\end{proof}

\section{The partial derivatives of $s$ span $I_8^{(2)}$}

In this section we prove that the 14 partial derivatives of $s$ span
$I_8^{(2)}$ and are linearly independent, thus establishing part of
Theorem~\ref{mainthm}.  We keep the notation from the previous section.

\begin{proposition}
\label{prop:quad}
Let $s$ be a non-zero skew-invariant element of $\Sym^3(V)$.  Then the 14
partial derivatives of $s$ are linearly independent and span $I^{(2)}$.
\end{proposition}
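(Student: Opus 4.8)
The plan is to exploit the $S_8$-equivariance of everything in sight, together with the multiplicity-free decompositions recorded in Proposition~\ref{prop:decomp}. First I would identify the span of the 14 partial derivatives $\partial s/\partial v_i$ with the image of a single equivariant map. Concretely, the cubic $s \in \Sym^3 V$ gives, by polarizing once, a linear map $V \to \Sym^2 V$, or equivalently an element of $V^* \otimes \Sym^2 V$; since $V$ is self-dual (it is a real/irreducible $S_8$-representation, and in any case $S_n$-representations are self-dual over $\Q$), this is the map $\partial : V \to \Sym^2 V$, $v \mapsto \partial_v s$, whose image is exactly the span of the 14 partials. This map is $S_8$-equivariant because $s$ is skew-invariant: for $\sigma \in S_8$ one has $\partial_{\sigma v}(s) = \sgn(\sigma)\,\sigma(\partial_v s)$, so $\partial$ is equivariant up to the sign twist, i.e.\ it is an honest $S_8$-map $V \to \Sym^2 V \otimes \sgn$.

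Next I would pin down the source and target as $S_8$-modules. The module $V$ is irreducible, corresponding to the partition $4+4$. Tensoring $\Sym^2 V$ with the sign representation transposes all the Young diagrams occurring in it; by Proposition~\ref{prop:decomp}, $\Sym^2 V$ consists of the irreducibles indexed by partitions of $8$ with at most four parts, all even, so $\Sym^2 V \otimes \sgn$ consists of those indexed by the transposed partitions — partitions of $8$ with at most four columns, all even... wait, more carefully: transposing a partition with all parts even and at most four parts yields a partition with exactly the conjugate shape. One then checks (for $n=8$ this is a short finite computation, already granted by the excerpt as checkable by hand) that the irreducible $4+4$ occurs in $\Sym^2 V \otimes \sgn$ with multiplicity one. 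Since the target is multiplicity-free, Schur's lemma forces the equivariant map $\partial : V \to \Sym^2 V \otimes \sgn$ to be either zero or injective onto the unique copy of $V$ there. If $\partial$ were zero, then $s$ would be annihilated by all directional derivatives, hence $s = 0$ in characteristic zero, contradicting $s \ne 0$. Therefore $\partial$ is injective, so the 14 partial derivatives are linearly independent, and their span is a copy of the irreducible $4+4$ sitting inside $\Sym^2 V \otimes \sgn \cong \Sym^2 V$ up to the sign twist — equivalently, inside $\Sym^2 V$ it is the copy of the irreducible indexed by $2+2+2+2$.

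Finally I would match this against $I_8^{(2)}$. By Proposition~\ref{prop:decomp}, $I^{(2)}$ is multiplicity-free and consists of the irreducibles indexed by partitions of $8$ into exactly four even parts; for $n=8$ the only such partition is $2+2+2+2$, so $I^{(2)}$ is the single irreducible $2+2+2+2$, which is $14$-dimensional (the standard hook-length count gives $\dim S^{(2,2,2,2)} = 14$). It therefore remains only to show that the span of the partials actually lies inside $I^{(2)}$, i.e.\ that each $\partial_v s$ vanishes on $M_8$. This follows because $s$ itself vanishes on the (affine cone over) $M_8$ — which is part of Theorem~\ref{mainthm}, or can be seen directly from the divisibility-by-discriminant argument of \S\ref{other} — together with the fact that $M_8$ is contained in the singular locus of $\{s=0\}$; but I would rather argue it cleanly by representation theory: the span of the partials is an $S_8$-submodule of $\Sym^2 V$ isomorphic to $S^{(2,2,2,2)}$, and $I^{(2)}$ is the \emph{unique} copy of $S^{(2,2,2,2)}$ in $\Sym^2 V$ (multiplicity one, from Proposition~\ref{prop:decomp} applied to $\Sym^2 V$), so the two coincide provided we know the span of partials is not the zero module and lands in $\Sym^2 V$ as that isotypic piece. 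Combining the dimension count ($\dim = 14$ on both sides) with the submodule containment then gives equality.

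The main obstacle I anticipate is the second step: verifying that $S^{(4,4)}$ appears in $\Sym^2 V \otimes \sgn$ with multiplicity exactly one (equivalently that $S^{(2,2,2,2)} \subset \Sym^2 V$), and more importantly confirming that $\partial s$ is nonzero and actually lands in the $I^{(2)}$ isotypic component rather than merely in \emph{some} copy of $S^{(2,2,2,2)}$ — here the multiplicity-freeness of $\Sym^2 V$ is exactly what saves us, since it means there is only one copy to land in. The nonvanishing of $\partial$ is automatic in characteristic zero from $s \ne 0$, so the real content is packaging the Schur's-lemma argument carefully and invoking Proposition~\ref{prop:decomp} for both $\Sym^2 V$ and $I^{(2)}$.
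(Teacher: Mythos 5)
Your argument is correct and is essentially the paper's: you package the 14 partials as the image of a single $S_8$-equivariant map out of a 14-dimensional irreducible (you write it as $V \to \Sym^2 V \otimes \sgn$; the paper writes the same map as $\Phi : V^* \otimes ks \to \Sym^2 V$, which is the same thing since $V$ is self-dual and $ks$ is the sign), invoke Schur's lemma together with the multiplicity-freeness of $\Sym^2 V$ from Proposition~\ref{prop:decomp} to force the image into the unique copy of the $(2,2,2,2)$ irreducible, observe that this copy \emph{is} $I^{(2)}$ by the same proposition, and note the map is nonzero because a nonzero cubic in characteristic zero has a nonzero partial. Your brief flirtation with proving containment in $I^{(2)}$ via vanishing on $M_8$ and Theorem~\ref{mainthm} would have been circular, but you correctly discard it in favor of the representation-theoretic argument, which is exactly what the paper does.
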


\begin{proof}
For an element $v^*$ of the dual space $V^*$ define a derivation
$\partial_{v^*}$ of $\Sym(V)$ by the formula
\begin{displaymath}
\partial_{v^*}(v_1 \cdots v_n)=\sum_{i=1}^n \langle v^*, v_i \rangle \cdot
(v_1 \cdots \hat{v}_i \cdots v_n)
\end{displaymath}
where the hat indicates that that factor is to be omitted.  We have a map
\begin{displaymath}
\Phi : V^* \otimes ks \to \Sym^2(V), \qquad v^* \otimes as \mapsto a
\partial_{v^*} s.
\end{displaymath}
The proposition states that $\Phi$ is injective with image $I^{(2)}$.

The crucial fact is that $\Phi$ is a map of $G$-modules.  Now, as a
$G$-module, $V$ is irreducible and corresponds to the Young diagram with
2 rows and 4 columns.  As with any representation of the symmetric group,
$V$ is self-dual.  Thus $V^* \otimes ks$ is the irreducible representation
with 4 rows and 2 columns (since $G$ acts on $ks$ by the sign representation).
Now, by Proposition~\ref{prop:decomp}, $\Sym^2(V)$ is multiplicity free.
Furthermore, that proposition shows that $I^{(2)}$ is irreducible and
corresponds to the Young diagram with 4 rows and 2 columns.  It thus follows
that $\Phi$ must have image contained in $I^{(2)}$.  Since the domain of
$\Phi$ is irreducible, it follows that $\Phi$ is either zero or injective.
But $\Phi$ cannot be zero since the non-zero polynomial $s$ must have some
non-zero partial derivative.  This proves the proposition.
\end{proof}

\section{The partial derivatives of $s$ have no linear syzygies --- set-up}

The goal of the next few sections is to establish the following proposition:

\begin{proposition}
\label{prop:nosyz}
The partial derivatives of $s$ have no linear syzygies.
\end{proposition}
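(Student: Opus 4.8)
The plan is to translate the phrase ``linear syzygy'' into the vanishing of a Lie-algebra action on $s$, and then squeeze the resulting subalgebra to zero using representation theory; this is sub-steps (3a)--(3c) of the outline.

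\emph{Step 1: linear syzygies form the annihilator $\mf{g}$ of $s$.} The Lie algebra $\End(V)\cong\mf{gl}(14)$ acts on $\Sym^{\bullet}(V)$ by derivations; for $A\in\End(V)$ write $A\cdot s\in\Sym^3(V)$. Choosing a basis $\{v_i^*\}$ of $V^*$ and writing $A=\sum_i\ell_i\otimes v_i^*$ with $\ell_i\in V$, one computes directly that $A\cdot s=\sum_i\ell_i\,\partial_{v_i^*}(s)$, which lies in $V\cdot I^{(2)}\subseteq I^{(3)}$ by Proposition~\ref{prop:quad}. This defines $\Psi:\End(V)\to I^{(3)}$, $A\mapsto A\cdot s$. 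Since every $A\in\End(V)$ is of this form and $\{\partial_{v_i^*}(s)\}$ is the chosen set of generators of $I^{(2)}$ (Proposition~\ref{prop:quad}), an element of $\ker\Psi$ is precisely a linear relation $\sum_i\ell_i\,\partial_{v_i^*}(s)=0$ among the partials, and conversely; so $\mf{g}:=\ker\Psi$ is canonically the space of linear syzygies, and Proposition~\ref{prop:nosyz} is the assertion $\mf{g}=0$. Two structural facts: $\mf{g}$ is the annihilator of the vector $s$ under the $\mf{gl}(14)$-action, hence a Lie subalgebra (if $X\cdot s=Y\cdot s=0$ then $[X,Y]\cdot s=X\cdot(Y\cdot s)-Y\cdot(X\cdot s)=0$); and $\mf{g}$ is stable under the conjugation action of $G=S_8$ on $\End(V)$, since $(\sigma A\sigma^{-1})\cdot s=\sgn(\sigma)\,\sigma\cdot(A\cdot s)$ as $s$ is skew-invariant.

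\emph{Step 2: classify the $G$-stable Lie subalgebras of $\End(V)$.} As a $G$-module $\End(V)=\Sym^2(V)\oplus\bw V$, and by Proposition~\ref{prop:decomp} (with $n=8$) both summands are multiplicity free with no common constituent, so every $G$-stable subspace of $\End(V)$ is a sum of a sub-collection of its (pairwise non-isomorphic) irreducible constituents. Because every representation of $S_8$ is orthogonal, the $G$-invariant symmetric form on $V$ identifies $\bw V$ with the orthogonal Lie algebra $\mf{so}(V)=\mf{so}(14)$, places the center $\C\cdot\mathrm{id}$ inside $\Sym^2(V)$, and yields $\Sym^2(V)=\C\cdot\mathrm{id}\oplus\mf{p}$ and $\mf{sl}(14)=\mf{so}(14)\oplus\mf{p}$ with $\mf{p}$ the traceless symmetric operators. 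Feeding these facts---together with the simplicity of $\mf{so}(14)$, the irreducibility of $\mf{p}$ as an $\mf{so}(14)$-module, and the classification of simple Lie algebras---into the general theory of $G$-stable Lie subalgebras developed in \S\ref{s-liesub}, one concludes that the only $G$-stable Lie subalgebras of $\End(V)$, up to adjoining or deleting the central line, are $0$, $\mf{so}(14)$, and $\mf{sl}(14)$. Hence $\mf{g}$ is one of $0$, $\C\cdot\mathrm{id}$, $\mf{so}(14)$, $\mf{so}(14)\oplus\C\cdot\mathrm{id}$, $\mf{sl}(14)$, $\mf{gl}(14)$.

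\emph{Step 3: eliminate the nonzero cases.} The identity acts on $\Sym^3(V)$ as the Euler operator, i.e.\ by multiplication by $3$, so $\mathrm{id}\cdot s=3s\neq0$ and $\mathrm{id}\notin\mf{g}$; this rules out the three options containing $\C\cdot\mathrm{id}$. For the other two it suffices to show $\mf{so}(14)$ annihilates no nonzero cubic. Exponentiating, a cubic killed by $\mf{so}(14)$ is invariant under $-\mathrm{id}\in\SO(14)$, which equals $\exp(\pi J)$ for a suitable $J\in\mf{so}(14)$ (seven commuting plane rotations, $14$ being even); since $-\mathrm{id}$ acts on $\Sym^3(V)$ by $(-1)^3=-1$, such a cubic vanishes. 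As $\mf{so}(14)\subseteq\mf{sl}(14)$, neither $\mf{so}(14)$ nor $\mf{sl}(14)$ can annihilate the nonzero cubic $s$, so $\mf{g}\neq\mf{so}(14),\mf{sl}(14)$. Therefore $\mf{g}=0$, which is Proposition~\ref{prop:nosyz}. (As a byproduct $\Psi$ is injective, hence an isomorphism onto $I^{(3)}$ by a dimension count, so $I^{(2)}$ generates $I^{(3)}$.)

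\emph{Where the difficulty lies.} Steps 1 and 3 are short formal manipulations; the substance is Step 2. Pinning down \emph{all} $G$-stable Lie subalgebras of $\mf{gl}(14)$---as opposed to merely checking that a given subspace is or is not a subalgebra---requires the machinery of \S\ref{s-liesub} and an appeal to the classification of simple Lie algebras, and this is where essentially all of the work in this part of the argument is concentrated.
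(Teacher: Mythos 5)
Your Step 1 matches the paper exactly, and your Step 3 gives a genuinely cleaner argument than the paper's: the paper proves that $\mf{so}(V)$ annihilates no nonzero cubic (its Proposition~\ref{prop:so-ann}) by an explicit monomial computation with the equations $x_i\partial_j s = x_j\partial_i s$, concluding that every variable would have to appear to an even power; your observation that a cubic killed by $\mf{so}(V)$ is fixed by the connected group $\SO(V)$, which contains $-\mathrm{id}$ (since $14$ is even) acting by $(-1)^3=-1$, is shorter and more conceptual and is a real improvement.

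However, your Step 2 has a genuine gap, and it is exactly at the place you yourself flag as ``where essentially all of the work... is concentrated.'' The paper's Proposition~\ref{prop:self-dual} is conditional: besides the no-common-constituent hypothesis that you do check, it also requires (i) that $V$ be primitive, and (ii) that $\mf{sl}(V)^-=\mf{so}(V)$ have \emph{no proper nonzero $G$-subalgebra}. You verify neither. Primitivity needs Corollary~\ref{cor:prim-test}, checked here by observing that every irreducible $G$-constituent of $\mf{sl}(V)$ has dimension at least $14=\dim V$. More seriously, hypothesis (ii) is the heart of the paper's Proposition~\ref{prop:subalg}: because $\bw V$ decomposes into two irreducibles of dimensions $35$ and $56$, any proper $G$-subalgebra of $\mf{so}(V)$ would be one of these; it would be prime, hence isotypic semisimple, and the only such Lie algebras of dimension $35$ or $56$ are $\mf{g}_2^4$, $\mf{so}(8)^2$, $\mf{sl}(3)^7$, $\mf{sl}(6)$, each of which must then be excluded via Corollary~\ref{cor:prime-constraint} and Lemma~\ref{lem:sl6}. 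Your appeal to ``the simplicity of $\mf{so}(14)$'' is a red herring: simplicity of $\mf{so}(14)$ as a Lie algebra has no bearing on whether it contains a $G$-stable Lie subalgebra, and indeed $\mf{so}(14)$ has an abundance of proper nonzero subalgebras. Until you carry out the dimension counting and the Lie-algebra classification needed to rule out those four candidates, the claim that $\mf{g}$ must be $0$, $\mf{so}(14)$, or $\mf{sl}(14)$ is not established.
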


This proposition means that if $\sum_{i=1}^{14} x_i \partial_i s=0$ with
$x_i$ in $\Sym^1(V)$ then $x_i=0$ for all $i$.  We will not prove
Proposition~\ref{prop:nosyz} in this section but we will reduce the proof to a
problem that we will soon solve.

Consider the composition
\begin{displaymath}
\wt{\Psi}:\End(V) \otimes \Sym^3(V) = V \otimes V^* \otimes \Sym^3(V) \to
V \otimes \Sym^2(V) \to \Sym^3(V)
\end{displaymath}
where the first map is the partial derivative map and the second map is
the multiplication map.  One easily verifies that $\wt{\Psi}$ is just the map
which expresses the action of the Lie algebra $\mf{gl}(V)=\End(V)$ on the
third symmetric power of its standard representation $V$.  We
are trying to show that $\wt{\Psi}$ induces an injection
\begin{displaymath}
\Psi:\End(V) \otimes ks \to I^{(3)}.
\end{displaymath}
(We know that $\Psi$ maps $\End(V) \otimes ks$ into $I^{(3)}$
since we know that the partial derivatives of $s$ belong to $I^{(2)}$.)
Indeed, the kernel of $\Psi$ is the space of linear syzygies between the
partial derivatives of $s$.  Now, the kernel of $\Psi$ is equal to $\mf{g}
\otimes ks$, where $\mf{g}$ is the annihilator in $\mf{gl}(V)$ of $s$.  Thus
Proposition~\ref{prop:nosyz} is equivalent to the following:

\begin{proposition}
\label{prop:l-zero}
We have $\mf{g}=0$.
\end{proposition}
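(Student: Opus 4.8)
The plan is to identify $\mf{g} = \{A \in \mf{gl}(V) : As = 0\}$ as a Lie subalgebra of $\End(V)$ that is stable under the $G = S_L$-action, and then to pin it down using the structure theory of $G$-stable Lie subalgebras of $\mf{gl}(V)$ together with the classification of simple Lie algebras. That $\mf{g}$ is a Lie subalgebra is immediate: if $A s = 0$ and $B s = 0$ then $[A,B]s = A(Bs) - B(As) = 0$, since $\mf{gl}(V)$ acts on $\Sym^3(V)$ by derivations compatibly with the bracket. That $\mf{g}$ is $G$-stable follows because $s$ is skew-invariant and hence spans a $G$-stable line: for $\sigma \in G$, $(\sigma A \sigma^{-1}) s = \sigma A (\sigma^{-1} s) = \pm \sigma (A s) = 0$. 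Also $\mf{g}$ contains no scalars other than $0$, since a nonzero scalar acts on the cubic $s$ by multiplication by $3$ times that scalar.

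Next I would invoke the general theory of $G$-stable Lie subalgebras of $\End(V)$ (developed in Section~\ref{s-liesub}) applied to $V = V_L$, which is an irreducible $S_8$-representation. The classification there, combined with the fact that the simple Lie algebras with a $14$-dimensional representation are severely restricted, should yield that the only $S_8$-stable Lie subalgebras of $\End(V)$ — modulo the center — are $0$, $\mf{so}(14)$, and $\mf{sl}(14)$. (This is step (3b) of the outlined proof.) Hence $\mf{g}$ is $0$, $\mf{so}(14)$, or $\mf{sl}(14)$, up to adding the center; but since $\mf{g}$ meets the center trivially and is a genuine subalgebra, $\mf{g} \in \{0, \mf{so}(14), \mf{sl}(14)\}$.

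To finish, I would rule out the two nonzero possibilities. Since $\mf{sl}(14)$ contains $\mf{so}(14)$, it suffices to show $\mf{so}(14)$ does not annihilate any nonzero element of $\Sym^3(V)$ (step (3c)). For this, observe that $\mf{so}(14)$ acting on $V = k^{14}$ leaves invariant a non-degenerate symmetric form, and one can decompose $\Sym^3(V)$ as an $\mf{so}(14)$-module: it contains no trivial summand (the trivial representation appears in $\Sym^{2}(V)$, via the invariant form, but the first place a trivial representation can appear in symmetric powers of the standard representation of $\mf{so}(14)$ beyond degree~$2$ is degree~$4$, not degree~$3$). Therefore the space of $\mf{so}(14)$-invariants in $\Sym^3(V)$ is zero, so $\mf{g} \neq \mf{so}(14)$ and a fortiori $\mf{g} \neq \mf{sl}(14)$. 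Hence $\mf{g} = 0$.

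The main obstacle I anticipate is step (3b): proving that the $S_8$-stable Lie subalgebras of $\mf{gl}(14)$ are so restricted. This requires the general machinery of Section~\ref{s-liesub} — understanding how a $G$-stable subalgebra $\mf{h}$ decomposes, forcing its semisimple part to be simple (using irreducibility of $V$ as an $S_8$-module and hence the absence of nontrivial $G$-stable direct sum decompositions of a product structure), and then using that a simple Lie algebra acting irreducibly on a $14$-dimensional space, with the representation admitting the symmetry $S_8$, must be of type $\mf{so}(14)$ or $\mf{sl}(14)$. The invariant-theoretic computations in steps (3a) and (3c) are comparatively routine; the representation-theoretic classification is where the real work lies.
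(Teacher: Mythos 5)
Your proposal is correct and follows the paper's overall strategy (establish that $\mf{g}$ is a $G$-stable Lie subalgebra of $\mf{sl}(V)$, classify the possibilities via \S\ref{s-liesub}, then rule out the non-zero options). The one place where you diverge is step (3c), the proof of what is Proposition~\ref{prop:so-ann} in the paper. The paper's proof is a direct coordinate computation: it picks an orthonormal basis, expands the relation $x_i\partial_j s = x_j\partial_i s$ for each pair $i \neq j$, and solves the resulting system of ODE-like constraints degree by degree to conclude that every variable would have to appear to an even power in $s$, which is impossible for a cubic. Your argument is purely representation-theoretic: $\Sym^3(V)$ as a module over $\mf{so}(14)$ decomposes as $V_{3\varpi_1} \oplus V_{\varpi_1}$ (the second summand coming from contraction with the quadratic form), neither of which is trivial, so there is no nonzero $\mf{so}(14)$-invariant cubic. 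Both are valid; yours is cleaner and more conceptual, and it makes transparent why the bound is at degree~$3$ rather than degree~$4$, whereas the paper's is entirely elementary and avoids any appeal to the branching of $\Sym^m$ of the standard orthogonal module. (The paper's subsequent remark, which reinterprets the computation via copies of $\mf{so}(2)$, is in the same spirit as yours but still works one pair of coordinates at a time.) One small point you gloss over: the classification of $G$-subalgebras in \S\ref{s-liesub} is carried out over an algebraically closed field, so as in the paper one should first base change to $\bar{k}$ before invoking it, which is harmless since $\mf{g} \otimes \bar{k} = 0$ implies $\mf{g} = 0$.
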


We know two important things about $\mf{g}$:  first, $\mf{g}$ is a Lie
subalgebra of $\mf{gl}(V)$, as it is the annihilator of some element in a
representation of $\mf{gl}(V)$; and second, $\mf{g}$ is stable under the
group $G$, as the action map $\Psi$ is $G$-equivariant and $ks$ is stable
under $G$.  We will prove Proposition~\ref{prop:l-zero} by first classifying
the $G$-stable Lie subalgebras of $\mf{gl}(V)$ and then proving that $\mf{g}$
cannot be any of them except zero.

Before continuing, we note a few results:

\begin{proposition}
\label{prop:skew-ideal}
The skew-invariant cubic $s$ belongs to $I^{(3)}$.
\end{proposition}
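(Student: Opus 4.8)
The plan is to show that $s$ maps to zero under the multiplication map $\Sym^3(V) \to R_8^{(3)}$, which is precisely the assertion $s \in I^{(3)}$. By the definition $R_L^{(3)} = [(\Sym^3 U^*)^{\otimes L}]^{\SL(U)}$, the image of $s$ --- call it $\bar s$ --- is an honest polynomial on $U^L$, homogeneous of degree $3$ in each of the eight vector arguments $p_k$; and since the maps $\Sym^3(V) \to R_8^{(3)} \hookrightarrow (\Sym^3 U^*)^{\otimes L}$ are $S_L$-equivariant, $\bar s$ is $S_L$-skew-invariant. (Concretely, $\bar s = \sum_{\Gamma \in \mc{M}_L} \sgn(\Gamma) \prod_{e \in \Gamma} \omega_e^3$.) So it suffices to prove $\bar s = 0$.

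First I would check that $\bar s$ vanishes identically on the hypersurface $\{\omega_{ij} = 0\} \subset U^L$, i.e.\ on the locus where $p_i$ and $p_j$ are linearly dependent, for every pair $i \ne j$. The transposition $(ij)$ acts on $\bar s$ by $-1$, so $\bar s$ vanishes wherever $p_i = p_j$; since $\bar s$ is homogeneous of degree $3$ in the $i$-th slot, rescaling that slot shows that $\bar s$ also vanishes wherever $p_i = \lambda p_j$ for a scalar $\lambda$, and trivially wherever $p_i = 0$ or $p_j = 0$ --- and these together exhaust $\{\omega_{ij} = 0\}$. As $\omega_{ij}$ is an irreducible polynomial cutting out this (reduced) hypersurface, it follows that $\omega_{ij} \mid \bar s$ in the polynomial ring $(\Sym^\bullet U^*)^{\otimes L}$.

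Now the polynomials $\omega_{1j}$ for $j \in L \setminus \{1\}$ are seven pairwise non-associate irreducibles dividing $\bar s$, so their product divides $\bar s$; but that product is homogeneous of degree $7$ in the first slot, whereas $\bar s$ has degree $3$ there, forcing $\bar s = 0$. (Equivalently: $\bar s$ would have to be divisible by the discriminant $\prod_{i<j}\omega_{ij}$, whose degree vastly exceeds that of a cubic.) The only point that needs care is the middle step: $S_L$-skew-invariance by itself yields vanishing only on the codimension-two locus $\{p_i = p_j\}$, and it is the multihomogeneity of $\bar s$ (degree $3$ in each slot, built into the definition of $R_L^{(3)}$) that promotes this to vanishing on the codimension-one locus $\{\omega_{ij} = 0\}$; without that promotion the divisibility, and hence the degree count, never gets started. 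An alternative route --- showing representation-theoretically that the sign character does not occur in $R_8^{(3)}$ --- would work but requires knowing that module and is less transparent than the argument above.
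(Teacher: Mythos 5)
Your proof is correct, but it follows the ``alternate argument'' that the paper only sketches (in \S 1.2, the discussion of $s_n$ for general $n$), rather than the Lie-algebra argument the paper actually uses for Proposition~\ref{prop:skew-ideal}. The paper's primary proof is quick but parasitic on the preceding result (Proposition~\ref{prop:quad}, that the partial derivatives of $s$ span $I^{(2)}$): since $x_i\,\partial_j s \in I^{(3)}$ for all $i,j$, every element of $\mf{gl}(V)$ sends $s$ into $I^{(3)}$, and applying the identity matrix yields $3s\in I^{(3)}$, hence $s\in I^{(3)}$ in characteristic zero. Your route is instead self-contained: it needs only skew-invariance, the multihomogeneity baked into $R_L^{(3)}\hookrightarrow (\Sym^3 U^*)^{\otimes L}$, and a degree count against $\prod_{j\neq 1}\omega_{1j}$. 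You also correctly identify the one delicate point that the paper glosses over --- that skew-invariance alone only gives vanishing on the codimension-two diagonal $\{p_i=p_j\}$ in $U^L$, and it is the multihomogeneity (or, equivalently, working projectively on $(\P^1)^L$, where the diagonal is already a divisor) that upgrades this to divisibility by $\omega_{ij}$. Each approach buys something: the paper's gives the slick ``apply the identity matrix'' one-liner once the surrounding machinery is in place; yours is elementary, needs none of it, and explains in one stroke why the same construction kills $s_n$ for all $n\geq 6$ and vanishes identically for $n\geq 8$.
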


\begin{proof}
We have already remarked that any element of the Lie algebra $\mf{gl}(V)$
takes $s$ into $I^{(3)}$.  Now, the identity matrix in $\mf{gl}(V)$ acts by
multiplication by 3 on $\Sym^3(V)$, and thus $3s$, and thus $s$, belongs to
$I^{(3)}$. An alternate argument, using the description of $s$ as a signed sum
of cubes of matchings (proof of Lemma \ref{threetimes}) was given in
\S \ref{other}.
\end{proof}

\begin{proposition}
The Lie algebra $\mf{g}$ is contained in $\mf{sl}(V)$.
\end{proposition}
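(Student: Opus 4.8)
The plan is to exploit the $G$-module splitting $\mf{gl}(V) = \mf{sl}(V) \oplus k\cdot\mathrm{id}_V$ together with the two properties of $\mf{g}$ already recorded: it is a $G$-stable subspace of $\mf{gl}(V)$, and it annihilates $s$. The key point is that the only $G$-fixed elements of $\mf{gl}(V)$ (under the conjugation action) are the scalars, by Schur's lemma, since $V$ is an absolutely irreducible representation of $G = S_L$ over a field of characteristic zero; and the scalar $\mathrm{id}_V$ acts on $\Sym^3(V)$ by multiplication by $3$, hence does not annihilate the nonzero cubic $s$.

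Concretely, I would argue by contradiction: suppose $\mf{g} \not\subseteq \mf{sl}(V)$ and choose $B \in \mf{g}$ with $\mathrm{tr}(B) \neq 0$. Since $\mf{g}$ is $G$-stable and $8!$ is invertible in $k$, the average $\bar B = \frac{1}{|G|}\sum_{\sigma \in G} \sigma B \sigma^{-1}$ again lies in $\mf{g}$. It is fixed by the conjugation action of $G$, so $\bar B \in \End_G(V) = k\cdot\mathrm{id}_V$; write $\bar B = c\cdot\mathrm{id}_V$. Trace is invariant under conjugation, so $14\,c = \mathrm{tr}(\bar B) = \mathrm{tr}(B) \neq 0$, whence $c \neq 0$. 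But $\bar B$ annihilates $s$, so $0 = \bar B s = c\cdot\mathrm{id}_V\cdot s = 3c\,s$, forcing $s = 0$, contradicting Proposition~\ref{prop-s}. Therefore $\mf{g} \subseteq \mf{sl}(V)$.

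I do not expect a serious obstacle here; the only points needing (routine) care are that the averaging operator preserves $\mf{g}$ (which is where characteristic zero and $G$-stability are used) and that $\End_G(V) = k\cdot\mathrm{id}_V$ — this is Schur's lemma combined with the fact that Specht modules over a characteristic-zero field are absolutely irreducible, so their endomorphism algebra is the ground field itself. One could equally phrase the middle step without averaging, by observing that the $G$-equivariant projection $\mf{gl}(V) \to k\cdot\mathrm{id}_V$ along $\mf{sl}(V)$ is nonzero on $\mf{g}$ and splitting off a trivial $G$-summand, but the averaging argument is the most economical.
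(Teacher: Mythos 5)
Your argument is correct and follows essentially the same route as the paper: both reduce to showing that a non-traceless element of the $G$-stable subalgebra $\mf{g}$ would force $\mf{g}$ to contain a nonzero scalar matrix, which cannot annihilate $s$ since $\mathrm{id}_V$ acts by $3$ on $\Sym^3(V)$. The only cosmetic difference is that you produce the scalar by Reynolds averaging plus Schur's lemma, whereas the paper observes that the trace map exhibits a trivial $G$-summand inside $\mf{g}$ and invokes the already-established multiplicity-freeness of $\mf{gl}(V)\cong V^{\otimes 2}$ (Proposition~\ref{prop:decomp}) to identify that summand with $k\cdot\mathrm{id}_V$.
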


\begin{proof}
The trace map $\mf{gl}(V) \to k$ is $G$-equivariant, where $G$ acts trivially
on the target.  Thus if $\mf{g}$ contained an element of non-zero trace it
would have to contain a copy of the trivial representation.  Thanks to
Proposition~\ref{prop:decomp}, we know that $\mf{gl}(V) \cong V^{\otimes 2}$
is multiplicity free as a representation of $G$.  Thus the one-dimensional
space spanned by the identity matrix is the only copy of the trivial
representation in $\mf{gl}(V)$.  Therefore, if $\mf{g}$ were not contained in
$\mf{sl}(V)$ then it would contain the center of $\mf{gl}(V)$.  However,
we know that the identity matrix does not annihilate $s$.  Thus $\mf{g}$ must
be contained in $\mf{sl}(V)$.
\end{proof}

\begin{proposition}
\label{prop:i2-i3}
Proposition~\ref{prop:nosyz} implies that $I^{(2)}$ generates $I^{(3)}$.
\end{proposition}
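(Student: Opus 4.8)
The plan is to prove the equality $V\cdot I^{(2)}=I^{(3)}$ by a dimension count; here $V\cdot I^{(2)}$ denotes the image of the multiplication map $V\otimes I^{(2)}\to\Sym^3(V)$, and this equality is exactly what it means for $I^{(2)}$ to generate $I^{(3)}$, since $I^{(0)}=I^{(1)}=0$.

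Granting Proposition~\ref{prop:nosyz}, we know from the set-up of this section that $\Psi:\End(V)\otimes ks\to I^{(3)}$ is injective. The first step is to observe that $\operatorname{im}(\Psi)\subseteq V\cdot I^{(2)}$. This holds because $\wt{\Psi}$ is by construction the composite of the partial-derivative map $\End(V)\otimes\Sym^3(V)\to V\otimes\Sym^2(V)$ with multiplication, and, by Proposition~\ref{prop:quad}, the partial-derivative map carries $\End(V)\otimes ks$ into $V\otimes I^{(2)}$ (each $\partial_{v^*}s$ lies in $I^{(2)}$). Hence
\[
\operatorname{im}(\Psi)\ \subseteq\ V\cdot I^{(2)}\ \subseteq\ I^{(3)},
\]
the second inclusion being automatic since $I$ is an ideal. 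The second step is to match dimensions at the two ends: by injectivity of $\Psi$ we have $\dim_k\operatorname{im}(\Psi)=\dim_k\End(V)=14^2=196$, while $\dim_k\Sym^3(V)=\binom{16}{3}=560$ and $\dim_k R_8^{(3)}=f(3)=364$ by the Hilbert function of $R_8$ recalled in the introduction, so that $\dim_k I^{(3)}=560-364=196$ as well. Therefore the two inclusions displayed above are equalities; in particular $V\cdot I^{(2)}=I^{(3)}$, which is the claim.

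Given Proposition~\ref{prop:nosyz}, there is no real obstacle: the argument uses only the identification of the space of linear syzygies with $\ker\Psi$ (already established), the description of $I^{(2)}$ in Proposition~\ref{prop:quad}, and the numerical value $f(3)=364$. If one wished to avoid quoting the Hilbert function for the left-hand end, one could instead note that the multiplication map $V\otimes I^{(2)}\to\Sym^3(V)$ is itself injective --- its kernel is once more the space of linear syzygies between the $\partial_i s$, which vanishes by Proposition~\ref{prop:nosyz} --- giving $\dim_k V\cdot I^{(2)}=14\cdot\dim_k I^{(2)}=196$ directly; but the value of $\dim_k I^{(3)}$ is still needed, and the Hilbert series is the quickest route to it.
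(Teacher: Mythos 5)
Your proof is correct and takes the same route as the paper: identify the subspace of $I^{(3)}$ generated by $I^{(2)}$ with the image of $\Psi$, then use the coincidence $\dim\End(V)=196=\dim I^{(3)}$ to convert injectivity (Proposition~\ref{prop:nosyz}) into surjectivity. The only difference is cosmetic --- you explicitly compute $\dim I^{(3)}=560-f(3)=560-364=196$ from the Hilbert function, whereas the paper simply asserts the equality of dimensions; that is a helpful detail to include.
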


\begin{proof}
The image of $\Psi$ is exactly the subspace of $I^{(3)}$ generated by $I^{(2)}$.
Thus $I^{(2)}$ generates $I^{(3)}$ if and only if $\Psi$ is surjective.  Now, $V$
being 14 dimensional, the dimension of $\End(V)$ is 196.  It happens that this
is exactly the dimension of $I^{(3)}$ as well.  Thus the domain and target of
$\Psi$ have the same dimension, and so surjectivity is equivalent to injectivity.
\end{proof}

As remarked in the introduction one can prove Theorem~\ref{mainthm} by using
Proposition~\ref{prop:i2-i3} and a result from \cite{HMSV} which states that $I_n$
is generated in degrees two and three for all $n$.  We will not take this route,
however, and instead give an alternate proof in \S \ref{s:fin} that $I^{(2)}$
generates $I$.

\section{Interlude:  $G$-stable Lie subalgebras of $\mf{sl}(V)$}
\label{s-liesub}

In this section $G$ will denote an arbitrary finite group and $V$ an
irreducible representation of $G$ over an algebraically closed field $k$ of
characteristic zero.  We investigate the following general problem:

\begin{problem}
Determine the $G$-stable Lie subalgebras of $\mf{sl}(V)$.
\end{problem}

We do not obtain a complete answer to this question, but we do prove
strong enough results to determine the answer in our specific situation.
We will use the term \emph{$G$-subalgebra} to mean a $G$-stable Lie
subalgebra.

\subsection{Some structure theory}

Our first result is the following:

\begin{proposition}
\label{prop:solv}
Let $V$ be an irreducible representation of $G$.  Then every solvable
$G$-subalgebra of $\mf{sl}(V)$ is abelian and consists solely of semi-simple
elements.
\end{proposition}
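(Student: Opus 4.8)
The plan is to play the irreducibility of $V$ against Lie's theorem, with one twist: Lie's theorem furnishes a complete flag in $V$ stabilized by a given solvable $G$-subalgebra $\mf h\subseteq\mf{sl}(V)$, but that flag need not be $G$-stable, so it cannot be fed to irreducibility directly. The remedy is to work instead with the simultaneous eigenvectors of $\mf h$, whose span \emph{is} $G$-stable.

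So let $\mf h$ be a solvable $G$-subalgebra of $\mf{sl}(V)$. First I would apply Lie's theorem --- this is the one place where $k$ algebraically closed of characteristic zero is used --- to obtain a character $\lambda\colon\mf h\to k$ whose weight space $V_\lambda=\{v\in V:Xv=\lambda(X)v\ \text{for all }X\in\mf h\}$ is nonzero. The key computation is that for $g\in G$ and $X\in\mf h$ one has $X(gv)=g\bigl((g^{-1}Xg)v\bigr)$, and since $\mf h$ is $G$-stable, $g^{-1}Xg$ again lies in $\mf h$; hence $g$ carries $V_\lambda$ isomorphically onto $V_{\lambda^g}$, where $\lambda^g(X)=\lambda(g^{-1}Xg)$. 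Thus $G$ permutes the finitely many weights of $\mf h$ on $V$, so the sum $S$ of all the weight spaces $V_\lambda$ is a nonzero $G$-stable subspace of $V$, and therefore $S=V$ by irreducibility.

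It remains to note that weight spaces for distinct characters are linearly independent --- the standard argument: a shortest nontrivial relation $\sum_j v_j=0$ with $v_j\in V_{\lambda_j}$ nonzero and the $\lambda_j$ distinct can be shortened by applying $X-\lambda_1(X)\cdot\mathrm{id}$ for any $X$ separating $\lambda_1$ from $\lambda_2$, a contradiction --- so in fact $V=\bigoplus_\lambda V_\lambda$. Consequently every $X\in\mf h$ is diagonal in a basis adapted to this decomposition: it is semisimple, which gives the second assertion, and any two such operators commute, which gives the first. I do not anticipate a real obstacle here; the only genuinely new observation, and the one that makes the argument run, is that the weight decomposition (unlike the Lie flag) is canonical enough to be permuted by $G$, so that irreducibility can be brought to bear.
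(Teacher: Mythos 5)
Your proof is correct, and it packages the argument a bit differently from the paper's. The paper also invokes Lie's theorem and irreducibility, but it proves the two conclusions in two separate passes: first it observes that $[\mf g,\mf g]$ strictly lowers the Lie flag, so $[\mf g,\mf g]V$ is a proper $G$-stable subspace, hence zero, giving abelianness; then, with abelianness in hand, it considers the (commutative) associative algebra $R\subset\End(V)$ generated by $\mf g$, splits it as $R=R_s\oplus R_n$ with $R_n$ the nilradical, and applies the same ``$G$-stable proper subspace must vanish'' principle to $R_nV$ to conclude $R_n=0$ and hence semisimplicity. You instead run the irreducibility argument once, at the level of the (honest, not generalized) weight-space decomposition: Lie's theorem gives a nonzero weight space, $G$-stability of $\mf h$ makes $G$ permute the weight spaces, so their sum is a nonzero $G$-stable subspace and therefore all of $V$; the resulting simultaneous diagonalization delivers both semisimplicity and commutativity in one stroke. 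Your route is somewhat slicker and avoids the structure theory of commutative Artinian algebras, while the paper's route avoids having to verify that the weight spaces span; both are short and both are correct. One small remark: you should note explicitly that the weights are honest eigencharacters (not generalized), which is what lets you read off semisimplicity immediately --- this is indeed how you defined $V_\lambda$, so the argument is fine as written.
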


\begin{proof}
Let $\mf{g}$ be a solvable subalgebra of $\mf{sl}(V)$.  By Lie's theorem,
$\mf{g}$ preserves a complete flag $0=V_0 \subset \cdots \subset V_n=V$.
The action of $\mf{g}$ on each one-dimensional space $V_i/V_{i-1}$ must factor
through $\mf{g}/[\mf{g}, \mf{g}]$; thus $[\mf{g}, \mf{g}]$ acts by zero on
$V_i/V_{i-1}$ and so carries $V_i$ into $V_{i-1}$.  The space $[\mf{g},
\mf{g}] V$ is therefore not all of $V$.  On the other hand, $[\mf{g}, \mf{g}]$
is $G$-stable and therefore so is $[\mf{g}, \mf{g}] V$.  From the
irreducibility of $V$ we conclude $[\mf{g}, \mf{g}] V=0$, from which it
follows that $[\mf{g}, \mf{g}]=0$.  Thus $\mf{g}$ is abelian.

Now let $R$ be the subalgebra of $\End(V)$ generated (under the usual
multiplication) by $\mf{g}$.  Let $R_s$ (resp.\ $R_n$) denote the set of
semi-simple (resp.\ nilpotent) elements of $R$.  Then $R_s$ is a subring of
$R$, $R_n$ is an ideal of $R$ and $R=R_s \oplus R_n$.  As $R_n^m=0$ for some
$m$, the space $R_n V$ is not all of $V$.  As it is $G$-stable it must be zero,
and so $R_n=0$.  We thus find that $R=R_s$ and so all elements of $R$, and
thus all elements of $\mf{g}$, are semi-simple.
\end{proof}

Let $V$ be a representation of $G$.  We say that $V$ is \emph{imprimitive}
if there is a decomposition $V=\bigoplus_{i \in I} V_i$ of $V$ into non-zero
subspaces, at least two in number, such that each element of $G$ carries each
$V_i$ into some $V_j$.  We say that $V$ is \emph{primitive} if it is not
imprimitive.  Note that primitive implies irreducible.  An irreducible
representation is imprimitive if and only if it is induced from a proper
subgroup.

\begin{proposition}
\label{prop:prim}
Let $V$ be an irreducible representation of $G$.  Then $V$ is primitive if
and only if the only abelian $G$-subalgebra of $\mf{sl}(V)$ is zero.
\end{proposition}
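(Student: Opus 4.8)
The plan is to establish both implications by connecting abelian $G$-subalgebras of $\mf{sl}(V)$ with decompositions of $V$ into eigenspaces. For the direction that matters most, suppose $V$ is imprimitive, with a decomposition $V=\bigoplus_{i\in I}V_i$ permuted by $G$, with $|I|\ge 2$. I would take $\mf{h}$ to be the space of operators that act as a scalar on each $V_i$; that is, $\mf{h}=\{T\in\End(V): T|_{V_i}=\lambda_i\,\mathrm{id}_{V_i}\text{ for some }\lambda_i\in k\}$. This is clearly an abelian subalgebra of $\End(V)$, and since each $g\in G$ permutes the summands $V_i$, conjugation by $g$ just permutes the scalars $\lambda_i$, so $\mf{h}$ is $G$-stable. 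Its dimension is $|I|\ge 2$, so intersecting with $\mf{sl}(V)$ (imposing the single condition $\sum_i(\dim V_i)\lambda_i=0$) still leaves a nonzero space. Hence there is a nonzero abelian $G$-subalgebra of $\mf{sl}(V)$. This proves the contrapositive of the "only if" direction.

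For the converse, suppose $\mf{h}$ is a nonzero abelian $G$-subalgebra of $\mf{sl}(V)$; I want to show $V$ is imprimitive. By Proposition~\ref{prop:solv}, $\mf{h}$ consists of commuting semisimple elements, hence is simultaneously diagonalizable. Let $V=\bigoplus_{\chi}V_\chi$ be the decomposition into joint eigenspaces, where $\chi$ runs over the (finitely many) characters $\chi:\mf{h}\to k$ occurring. Because $\mf{h}\ne 0$ and every element of $\mf{h}$ is traceless, no single $\chi$ can be zero on all of $\mf{h}$ while being the only eigenvalue — indeed if there were only one joint eigenspace $V=V_\chi$, then each $T\in\mf{h}$ acts as the scalar $\chi(T)$, and tracelessness forces $\chi(T)=0$, so $T=0$, contradicting $\mf{h}\ne 0$. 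Thus there are at least two distinct joint eigenspaces. Finally, since $\mf{h}$ is $G$-stable, each $g\in G$ carries the $\chi$-eigenspace to the $(\chi\circ\mathrm{Ad}\,g^{-1})$-eigenspace, so $G$ permutes the set of joint eigenspaces $\{V_\chi\}$. Grouping: strictly speaking the $V_\chi$ need not be permuted if two distinct characters give eigenspaces that $G$ mixes, but in fact $g$ sends $V_\chi$ bijectively onto $V_{\chi'}$ for the character $\chi'=\chi\circ\mathrm{Ad}\,g^{-1}$, so the partition $\{V_\chi\}$ is genuinely permuted by $G$. This exhibits $V$ as imprimitive.

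The one subtlety to watch, and the step I expect to require the most care, is the eigenspace bookkeeping in the converse: one must check that $G$ really does permute the joint eigenspaces $\{V_\chi\}$ as a set (rather than, say, sending one eigenspace into a sum of several), which follows because $\mathrm{Ad}\,g$ is an automorphism of $\mf{h}$ and hence induces a bijection on the set of characters appearing, and that the number of eigenspaces is at least two, which is exactly where the $\mf{sl}$ (tracelessness) hypothesis and the nonvanishing of $\mf{h}$ are used. Everything else — diagonalizability from Proposition~\ref{prop:solv}, $G$-stability of the scalar algebra $\mf{h}$ in the forward direction, the dimension count $|I|\ge 2 \Rightarrow \dim(\mf{h}\cap\mf{sl}(V))\ge 1$ — is routine. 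Note also that irreducibility of $V$ is not really needed for the argument as structured, but it is harmless to assume it, and it guarantees the hypothesis of Proposition~\ref{prop:solv} applies cleanly.
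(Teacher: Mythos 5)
Your proof is correct and follows essentially the same route as the paper: semisimplicity from Proposition~\ref{prop:solv} gives the joint eigenspace decomposition (with tracelessness forcing at least two eigenspaces), and in the other direction the span of the block-scalar operators (equivalently, the projections $p_i$) intersected with $\mf{sl}(V)$ gives the required nonzero abelian $G$-subalgebra. The extra bookkeeping you spell out about $G$ permuting the eigenspaces via $\chi\mapsto\chi\circ\mathrm{Ad}\,g^{-1}$ is the same fact the paper asserts more briefly.
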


\begin{proof}
Let $V$ be an irreducible representation of $G$ and let $\mf{g}$ be a non-zero
abelian $G$-subalgebra of $\mf{sl}(V)$.  We will show that $V$ is
imprimitive.  By Proposition~\ref{prop:solv} all elements of $\mf{g}$ are
semi-simple.  We thus get a decomposition $V=\bigoplus V_{\lambda}$ of $V$ into
eigenspaces of $\mf{g}$ (each $\lambda$ is a linear map $\mf{g} \to k$).  As
$\mf{g}$ is $G$-stable, each element of $G$ must carry each $V_{\lambda}$ into
some $V_{\lambda'}$.  Note that if $V=V_{\lambda}$ for some $\lambda$
then $\mf{g}$ would consist of scalar matrices, which is impossible as $\mf{g}$
is contained in $\mf{sl}(V)$.  Thus there must be at least two non-zero
$V_{\lambda}$ and so $V$ is imprimitive.

We now establish the other direction.  Thus let $V$ be an irreducible
imprimitive representation of $G$.  We construct a non-zero abelian
$G$-subalgebra of $\mf{sl}(V)$.  Write $V=\bigoplus V_i$ where the elements of
$G$ permute the $V_i$.  Let $p_i$ be the endomorphism of $V$ given by
projecting onto $V_i$ and then including back into $V$ and let $\mf{g}$ be the
subspace of $\mf{gl}(V)$ spanned by the $p_i$.  Then $\mf{g}$ is an abelian
subalgebra of $\mf{gl}(V)$ since $p_i p_j=0$ for $i \ne j$.  Furthermore,
$\mf{g}$ is $G$-stable since for each $i$ we have $g p_i g^{-1}=p_j$ for some
$j$.  Intersecting $\mf{g}$ with $\mf{sl}(V)$ gives a non-zero abelian
$G$-subalgebra of $\mf{sl}(V)$ (the intersection is non-zero because $\mf{g}$
has dimension at least two and $\mf{sl}(V)$ has codimension one).
\end{proof}

We have the following important consequence of Proposition~\ref{prop:prim}:

\begin{corollary}
\label{cor:prim-ss}
Let $V$ be a primitive representation of $G$.  Then every $G$-subalgebra of
$\mf{sl}(V)$ is semi-simple.
\end{corollary}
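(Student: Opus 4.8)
The plan is to show that the solvable radical of any $G$-subalgebra of $\mf{sl}(V)$ vanishes, and then invoke the structure theory of Lie algebras (zero radical implies semi-simple). So let $\mf{g}$ be a $G$-subalgebra of $\mf{sl}(V)$ and let $\mf{r} \subseteq \mf{g}$ be its radical, i.e.\ the maximal solvable ideal of $\mf{g}$.

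First I would check that $\mf{r}$ is $G$-stable. The point is that $G$ acts on $\mf{g}$ by Lie algebra automorphisms (the action is by conjugation by elements of $\GL(V)$, restricted to $\mf{g}$), and the radical is a characteristic ideal: any automorphism of $\mf{g}$ carries ideals to ideals and preserves solvability, hence carries $\mf{r}$ to a solvable ideal, which by maximality is again $\mf{r}$. Therefore $g \cdot \mf{r} = \mf{r}$ for all $g \in G$, so $\mf{r}$ is a $G$-stable solvable Lie subalgebra of $\mf{sl}(V)$ — a solvable $G$-subalgebra.

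Now I would apply the two preceding propositions in turn. By Proposition~\ref{prop:solv}, every solvable $G$-subalgebra of $\mf{sl}(V)$ is abelian, so $\mf{r}$ is abelian. By Proposition~\ref{prop:prim}, since $V$ is primitive, the only abelian $G$-subalgebra of $\mf{sl}(V)$ is zero; hence $\mf{r} = 0$. A Lie algebra whose radical is zero is semi-simple, so $\mf{g}$ is semi-simple, as desired.

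I do not expect any serious obstacle here: the corollary is essentially a formal consequence of Propositions~\ref{prop:solv} and~\ref{prop:prim} once one observes that the radical of a $G$-subalgebra is again a $G$-subalgebra. The only point requiring a moment's care is precisely that observation — that the solvable radical, being characteristic in $\mf{g}$, is preserved by the $G$-action — and it is immediate from the uniqueness of the maximal solvable ideal.
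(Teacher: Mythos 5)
Your proof is correct and follows essentially the same route as the paper: the paper also observes that the radical of $\mf{g}$ is a solvable $G$-subalgebra and hence vanishes by the two preceding propositions. You spell out the (true, and worth noting) point that the radical is $G$-stable because it is a characteristic ideal, which the paper leaves implicit.
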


\begin{proof}
Let $\mf{g}$ be a $G$-subalgebra of $\mf{sl}(V)$.  The radical of
$\mf{g}$ is then a solvable $G$-subalgebra and therefore vanishes.
Thus $\mf{g}$ is semi-simple.
\end{proof}

Proposition~\ref{prop:prim} can also be used to give a criterion for
primitivity.

\begin{corollary}
\label{cor:prim-test}
Let $V$ be an irreducible representation of $G$ such that each non-zero
$G$-submodule of $\mf{sl}(V)$ has dimension at least that of $V$.  Then $V$
is primitive.
\end{corollary}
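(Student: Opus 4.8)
We are given an irreducible representation $V$ of $G$ with the property that every non-zero $G$-submodule of $\mf{sl}(V)$ has dimension at least $\dim V$, and we must conclude that $V$ is primitive.

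The plan is to argue by contraposition using Proposition~\ref{prop:prim}: suppose $V$ is imprimitive and produce a non-zero $G$-submodule of $\mf{sl}(V)$ of dimension strictly less than $\dim V$. The construction in the second half of the proof of Proposition~\ref{prop:prim} already does most of the work, so the main point is to check the dimension bound carefully. First I would recall that if $V$ is imprimitive then $V=\bigoplus_{i\in I} V_i$ with $|I|\ge 2$ and $G$ permuting the $V_i$; by irreducibility $G$ permutes the $V_i$ transitively, so all the $V_i$ have a common dimension $d$, and $\dim V = d\cdot|I|$. Letting $p_i\in\End(V)$ be the projection onto $V_i$, the span $\mf{h}$ of the $p_i$ is a $G$-stable abelian subalgebra of $\mf{gl}(V)$ of dimension exactly $|I|$, and $\mf{g}:=\mf{h}\cap\mf{sl}(V)$ is a non-zero $G$-submodule of dimension $|I|-1$.

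The key step is then the inequality $|I|-1 < \dim V = d\cdot|I|$, which holds because $d\ge 1$ and $|I|\ge 2$ force $d\cdot|I|\ge |I| > |I|-1$. Hence $\mf{g}$ is a non-zero $G$-submodule of $\mf{sl}(V)$ of dimension less than $\dim V$, contradicting the hypothesis; therefore $V$ must be primitive.

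I do not anticipate a serious obstacle here: the whole argument is a bookkeeping exercise layered on top of Proposition~\ref{prop:prim}. The one place to be slightly careful is the transitivity claim — $G$ permutes the blocks $V_i$ and the orbit sum of any block is a non-zero $G$-submodule, hence all of $V$ by irreducibility, so the action on $I$ is transitive and the blocks are equidimensional; this is exactly what makes $\dim V$ a multiple of $|I|$ and drives the dimension inequality. Everything else is immediate from the constructions already in hand.
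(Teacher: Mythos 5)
Your proof is correct, but it takes a different route from the paper's.

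You argue by contraposition: assuming $V$ imprimitive, you reconstruct the explicit abelian $G$-subalgebra $\mf{g}=\mf{h}\cap\mf{sl}(V)$ spanned by the block projections (essentially re-deriving the "imprimitive $\Rightarrow$ nonzero abelian $G$-subalgebra" direction of Proposition~\ref{prop:prim}), and then check that its dimension $|I|-1$ is strictly less than $\dim V$. The paper instead uses Proposition~\ref{prop:prim} entirely as a black box: it takes an \emph{arbitrary} abelian $G$-subalgebra $\mf{g}$, invokes Proposition~\ref{prop:solv} to conclude $\mf{g}$ consists of commuting semisimple elements, hence lies in a Cartan subalgebra of $\mf{sl}(V)$, and therefore $\dim\mf{g}\le\dim V-1<\dim V$; the hypothesis then forces $\mf{g}=0$ and Proposition~\ref{prop:prim} gives primitivity. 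The paper's argument is shorter because the Cartan-subalgebra bound covers every abelian $G$-subalgebra at once, rather than only the one produced by an imprimitivity decomposition; your version is a bit more elementary (no appeal to Cartan subalgebras) and makes the relevant subalgebra completely explicit. One small remark: your invocation of transitivity to get equidimensional blocks is more than you need --- $\dim V=\sum_i\dim V_i\ge|I|>|I|-1$ already gives the inequality without knowing the blocks all have the same dimension.
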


\begin{proof}
Let $\mf{g}$ be a abelian $G$-subalgebra of $\mf{sl}(V)$.  We will show
that $\mf{g}$ is zero.  By Proposition~\ref{prop:solv} $\mf{g}$ consists of
semi-simple elements and is therefore contained in some Cartan subalgebra of
$\mf{sl}(V)$.  This shows that $\dim{\mf{g}} < \dim{V}$.  Thus, by our
hypothesis, $\mf{g}=0$.
\end{proof}

Let $V$ be a primitive $G$-module and let $\mf{g}$ be a $G$-subalgebra.
As $\mf{g}$ is semi-simple it decomposes as $\mf{g}=\bigoplus \mf{g}_i$ where
each $\mf{g}_i$ is a simple Lie algebra.  The $\mf{g}_i$ are called the
\emph{simple factors} of $\mf{g}$ and are unique.  As the simple factors are
unique, $G$ must permute them.  We call $\mf{g}$ \emph{prime} if the action of
$G$ on its simple factors is transitive.  Note that in this case the
$\mf{g}_i$'s are isomorphic and so $\mf{g}$ is ``isotypic.''  Clearly, every
$G$-subalgebra of $\mf{sl}(V)$ breaks up into a sum of prime subalgebras and
so it suffices to understand these.

\subsection{The action of a $G$-subalgebra on $V$}

We now consider how a $G$-stable subalgebra acts on $V$:

\begin{proposition}
\label{prop:iso}
Let $V$ be a primitive $G$-module, let $\mf{g}$ be a $G$-subalgebra of
$\mf{sl}(V)$ and let $\mf{g}=\bigoplus_{i \in I} \mf{g}_i$ be the decomposition
of $\mf{g}$ into simple factors.
\begin{enumerate}
\item The representation of $\mf{g}$ on $V$ is isotypic, that is, it is of the
form $V_0^{\oplus m}$ for some irreducible $\mf{g}$-module $V_0$.
\item We have a decomposition $V_0=\bigotimes_{i \in I} W_i$ where each $W_i$
is a faithful irreducible representation of $\mf{g}_i$.
\item We have $V_0 \cong V_0^g$ for each element $g$ of $G$.  (Here $V_0^g$
denotes the $\mf{g}$-module obtained by twisting $V_0$ by the automorphism
$g$ induces on $\mf{g}$.)
\item If $\mf{g}$ is a prime subalgebra then for any $i$ and $j$ one can
choose an isomorphism $f:\mf{g}_i \to \mf{g}_j$ so that $W_i$ and $f^* W_j$
become isomorphic as $\mf{g}_i$-modules.
\end{enumerate}
\end{proposition}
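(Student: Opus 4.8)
The plan is to deduce all four assertions from Weyl's complete reducibility theorem together with a single elementary compatibility between the $G$-action on $V$ and the $\mf{g}$-module structure. By Corollary~\ref{cor:prim-ss} the algebra $\mf{g}$ is semisimple, so in characteristic zero $V$ is a semisimple $\mf{g}$-module; write $\mf{g}=\bigoplus_{i\in I}\mf{g}_i$ for its decomposition into simple factors, and recall that $G$ permutes the $\mf{g}_i$ because they are intrinsically attached to $\mf{g}$. For $g\in G$ let $c_g$ be the automorphism $x\mapsto gxg^{-1}$ of $\mf{g}$ (well defined since $\mf{g}$ is $G$-stable), and for an automorphism $\phi$ of $\mf{g}$ and a $\mf{g}$-module $M$ let $M^\phi$ denote $M$ with $\mf{g}$ acting through $\phi$, so that the $V_0^g$ appearing in the statement is $V_0^{c_g}$. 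The one identity I would use repeatedly is $x(gv)=g\bigl(c_g^{-1}(x)\,v\bigr)$ for $x\in\mf{g}$, $v\in V$; it says precisely that multiplication by $g$ is an isomorphism of $\mf{g}$-modules $V^{c_g^{-1}}\to V$, so that $V\cong V^{c_g}$ for every $g\in G$.

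For part~(1) I would argue that $G$ permutes the isotypic components of $V$: since $G$ normalizes $\mf{g}$ inside $\End(V)$, each $g\in G$ carries a $\mf{g}$-submodule $W$ of $V$ to the $\mf{g}$-submodule $gW$, and $gW$ is irreducible whenever $W$ is, so $g$ permutes the irreducible submodules and hence the isotypic components. These components then constitute a $G$-stable direct-sum decomposition of $V$ into nonzero subspaces, so primitivity forces there to be exactly one, i.e.\ $V\cong V_0^{\oplus m}$ with $V_0$ irreducible. For part~(2) I would invoke the standard fact that, over the algebraically closed field $k$ of characteristic zero, every irreducible module over $\mf{g}=\bigoplus\mf{g}_i$ is an external tensor product $\bigotimes_{i\in I}W_i$ with $W_i$ irreducible over $\mf{g}_i$; this applies to $V_0$. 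Faithfulness of $W_i$ then follows because $\ker(\mf{g}_i\to\End(W_i))$ is an ideal of the simple algebra $\mf{g}_i$, hence $0$ or all of $\mf{g}_i$; in the latter case $\mf{g}_i$ would act by zero on $W_i$, hence on $V_0=\bigotimes_l W_l$ (it operates only on the $i$th factor), hence on $V=V_0^{\oplus m}$, contradicting the fact that the nonzero subspace $\mf{g}_i$ of $\End(V)$ acts faithfully by definition.

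For part~(3) I would twist the isomorphism $V\cong V^{c_g}$ through $V=V_0^{\oplus m}$ to get $V_0^{\oplus m}\cong(V_0^{c_g})^{\oplus m}$ as semisimple $\mf{g}$-modules, and then conclude $V_0\cong V_0^{c_g}=V_0^g$ by uniqueness of the decomposition into irreducibles. For part~(4) I would use primeness: $G$ acts transitively on the $\mf{g}_i$, so given $i$ and $j$ I can choose $g\in G$ with $c_g(\mf{g}_i)=\mf{g}_j$ and set $f=c_g|_{\mf{g}_i}:\mf{g}_i\to\mf{g}_j$, which is an isomorphism. Restricting the isomorphism $V_0\cong V_0^{c_g}$ of part~(3) to $\mf{g}_i$: on the left, the tensor description of part~(2) gives $V_0|_{\mf{g}_i}\cong W_i^{\oplus a}$ with $a=\prod_{l\neq i}\dim W_l$; on the right, an element $x\in\mf{g}_i$ acts on $V_0^{c_g}$ as $c_g(x)=f(x)\in\mf{g}_j$ acting on $V_0$, so $V_0^{c_g}|_{\mf{g}_i}\cong f^{*}\bigl(V_0|_{\mf{g}_j}\bigr)\cong(f^{*}W_j)^{\oplus b}$. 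Comparing these two semisimple $\mf{g}_i$-modules yields $W_i\cong f^{*}W_j$.

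I do not expect a serious obstacle here. The only genuinely external input is the factorization in part~(2) of an irreducible module over a semisimple Lie algebra as an external tensor product of irreducibles of the simple factors — this is where algebraic closedness of $k$ enters. The point requiring the most care is the bookkeeping of twists: fixing once and for all the convention for $M^{\phi}$, tracking which simple factor $c_g$ sends to which, and invoking uniqueness of decomposition into irreducibles at exactly the right moments.
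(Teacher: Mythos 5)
Your proof is correct and follows essentially the same route as the paper: isotypic components are permuted by $G$ so primitivity forces a single one; the tensor factorization of an irreducible over a semisimple algebra gives (2), with faithfulness of the $W_i$ following from simplicity of the $\mf{g}_i$; and (3) and (4) are obtained by comparing $V$ with its twist $V^g$ and appealing to uniqueness of the decomposition. The only differences are cosmetic — you spell out more of the bookkeeping with the $c_g$-twists and the faithfulness deduction than the paper does.
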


\begin{proof}
(1) Since $\mf{g}$ is semi-simple we get a decomposition $V=\bigoplus
V_i^{\oplus m_i}$ of $V$ as a $\mf{g}$-module, where the $V_i$ are pairwise
non-isomorphic simple $\mf{g}$-modules.  Each element $g$ of $G$ must take
each isotypic piece $V_i^{\oplus m_i}$ to some other isotypic piece
$V_j^{\oplus m_j}$ since the map $g:V \to V^g$ is $\mf{g}$-equivariant.  As
$V$ is primitive for $G$, we conclude that it must be isotypic for $\mf{g}$,
and so we may write $V=V_0^{\oplus m}$ for some irreducible $\mf{g}$-module
$V_0$.

(2) As $V_0$ is irreducible, it necessarily decomposes as a tensor product
$V_0=\bigotimes_{i \in I} W_i$ where each $W_i$ is an irreducible
$\mf{g}_i$-module.  Since the representation of $\mf{g}$ on $V=V_0^{\oplus m}$
is faithful so too must be the representation of $\mf{g}$ on $V_0$.  From this,
we conclude that each $W_i$ must be a faithful representation of $\mf{g}_i$.

(3) For any $g \in G$ the map $g:V \to V^g$ is an isomorphism of
$\mf{g}$-modules and so $V_0^{\oplus m}$ is isomorphic to $(V_0^{\oplus m})^g
=(V_0^g)^{\oplus m}$, from which it follows that $V_0$ is isomorphic to
$V_0^g$.

(4) Since $G$ acts transitively on the simple factors, given $i$ and $j$ we
can pick $g \in G$ such that $g \mf{g}_i= \mf{g}_j$.  The isomorphism of $V_0$
with $V_0^g$ then gives the isomorphism of $W_i$ and $W_j$ as
$\mf{g}_i$-modules.
\end{proof}

This proposition gives a strong numerical constraint on prime
subalgebras:

\begin{corollary}
\label{cor:prime-constraint}
Let $V$ be a primitive representation of $G$ and let $\mf{g}=\mf{g}_0^n$ be a
prime subalgebra of $\mf{sl}(V)$, where $\mf{g}_0$ is a simple Lie algebra.
Then $\dim{V}$ is divisible by $d^n$ where $d$ is the dimension of some
faithful representation of $\mf{g}_0$.  In particular, $\dim{V} \ge d_0^n$
where $d_0$ is the minimal dimension of a faithful representation of
$\mf{g}_0$.
\end{corollary}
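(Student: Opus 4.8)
The plan is to read this off directly from Proposition~\ref{prop:iso}. First I would apply part~(1) of that proposition to write $V \cong V_0^{\oplus m}$ for an irreducible $\mf{g}$-module $V_0$ and an integer $m \ge 1$, and then apply part~(2) to obtain a factorization $V_0 \cong \bigotimes_{i \in I} W_i$, where $I$ (the index set of simple factors) has $n$ elements since $\mf{g} \cong \mf{g}_0^n$, and each $W_i$ is a faithful irreducible $\mf{g}_i$-module.

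The crucial input is that $\mf{g}$ is prime. By part~(4) of Proposition~\ref{prop:iso}, for any $i, j \in I$ there is an isomorphism $f : \mf{g}_i \to \mf{g}_j$ under which $W_i$ and $f^* W_j$ become isomorphic; in particular all the $W_i$ have a common dimension $d$. Since each $\mf{g}_i$ is isomorphic to $\mf{g}_0$ and $W_i$ is a faithful representation of $\mf{g}_i$, this common value $d$ is the dimension of a faithful representation of $\mf{g}_0$.

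All that then remains is a dimension count: $\dim V_0 = \prod_{i \in I} \dim W_i = d^n$, so $\dim V = m\, d^n$ is divisible by $d^n$. For the final inequality, $d \ge d_0$ by the definition of $d_0$ as the minimal dimension of a faithful representation (a faithful irreducible representation being in particular faithful), and $m \ge 1$, whence $\dim V \ge d^n \ge d_0^n$. I do not anticipate a genuine obstacle here: the statement is a formal consequence of Proposition~\ref{prop:iso}, and the only point needing care is the trivial observation relating $d$ to $d_0$.
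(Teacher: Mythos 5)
Your proof is correct and is exactly the intended argument: the paper states this as an immediate corollary of Proposition~\ref{prop:iso} without writing out a separate proof, and your reading (parts (1), (2), (4) giving $V \cong (\bigotimes_i W_i)^{\oplus m}$ with all the $W_i$ of common dimension $d$, hence $\dim V = m d^n$, plus $d \ge d_0$) is the only sensible way to deduce it.
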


\subsection{Self-dual representations}

Let $V$ be an irreducible self-dual $G$-module.  Thus we have a non-degenerate
$G$-invariant form $\langle, \rangle:V \otimes V \to k$.  Such a form is
unique up to scaling, and either symmetric or anti-symmetric.  We accordingly
call $V$ \emph{orthogonal} or \emph{symplectic}.

Let $A$ be an endomorphism of $V$.  We define the \emph{transpose} of $A$,
denoted $A^t$, by the formula
\begin{displaymath}
\langle A^t v, u \rangle=\langle v, Au \rangle.
\end{displaymath}
It is easily verified that $(AB)^t=B^t A^t$ and $({}^g A)^t={}^g (A^t)$.
We call an endomorphism $A$ \emph{symmetric} if $A=A^t$ and
\emph{anti-symmetric} if $A=-A^t$.  One easily verifies that the commutator
of two anti-symmetric endomorphisms is again anti-symmetric.  Thus the set of
all anti-symmetric endomorphisms forms a $G$-subalgebra of $\mf{sl}(V)$ which
we denote by $\mf{sl}(V)^-$.  In the orthogonal case $\mf{sl}(V)^-$ is
isomorphic to $\mf{so}(V)$ as a Lie algebra and $\bw{V}$ as a $G$-module,
while in the symplectic case it is isomorphic to $\mf{sp}(V)$ as a Lie algebra
and $\Sym^2(V)$ as a $G$-module.  We let $\mf{sl}(V)^+$ denote the space of
symmetric endomorphisms.

\begin{proposition}
\label{prop:self-dual}
Let $V$ be an irreducible self-dual $G$-module.  Assume that:
\begin{itemize}
\item $\Sym^2(V)$ and $\bw{V}$ have no isomorphic $G$-submodules; and
\item $\mf{sl}(V)^-$ has no proper non-zero $G$-subalgebras.
\end{itemize}
Then any proper non-zero $G$-subalgebra of $\mf{sl}(V)$ other than
$\mf{sl}(V)^-$ is commutative.  In particular, if $V$ is primitive then the
$G$-subalgebras of $\mf{sl}(V)$ are exactly 0, $\mf{sl}(V)^-$ and
$\mf{sl}(V)$.
\end{proposition}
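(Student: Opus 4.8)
The plan is to analyze an arbitrary proper non-zero $G$-subalgebra $\mf{g}$ of $\mf{sl}(V)$ that is not equal to $\mf{sl}(V)^-$, and to show it must be commutative; the final statement then follows immediately from Corollary~\ref{cor:prim-ss}, since when $V$ is primitive the only commutative $G$-subalgebra is $0$ (Proposition~\ref{prop:prim}), leaving only $0$, $\mf{sl}(V)^-$, and $\mf{sl}(V)$ itself.

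First I would decompose $\mf{sl}(V) = \mf{sl}(V)^- \oplus \mf{sl}(V)^+$ as $G$-modules, using the transpose involution; this is a direct sum decomposition because we are in characteristic zero (so the $+1$ and $-1$ eigenspaces of $A \mapsto A^t$ split $\mf{sl}(V)$). Given $\mf{g}$, the derived subalgebra $[\mf{g},\mf{g}]$ is again a $G$-subalgebra of $\mf{sl}(V)$. The key structural observation is the bracket behavior: $[\mf{sl}(V)^-, \mf{sl}(V)^-] \subseteq \mf{sl}(V)^-$ (the commutator of two anti-symmetric endomorphisms is anti-symmetric, as already noted), while $[\mf{sl}(V)^+, \mf{sl}(V)^+] \subseteq \mf{sl}(V)^-$ as well (the commutator of two symmetric endomorphisms is anti-symmetric: $[A,B]^t = B^t A^t - A^t B^t = BA - AB = -[A,B]$), and $[\mf{sl}(V)^-, \mf{sl}(V)^+] \subseteq \mf{sl}(V)^+$. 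Consequently $[\mf{g},\mf{g}]$, being spanned by brackets, lands inside $\mf{sl}(V)^-$: writing any $X \in \mf{g}$ as $X = X^- + X^+$, each bracket $[X,Y]$ expands into terms each of which is anti-symmetric except possibly the cross terms $[X^-, Y^+] + [X^+, Y^-]$, so one must check these cancel or are handled — in fact the correct statement is that $[\mf{g},\mf{g}] \cap \mf{sl}(V)^+$ is a $G$-submodule of $\mf{sl}(V)^+$ disjoint (by the first hypothesis, since $\mf{sl}(V)^+$ contains no $G$-submodule isomorphic to one in $\mf{sl}(V)^- = \bw{V}$ or $\Sym^2 V$ respectively — one must be careful here about which of $\mf{sl}(V)^{\pm}$ is which $G$-module, but in either the orthogonal or symplectic case the two pieces share no common $G$-constituent). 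So $[\mf{g},\mf{g}] \subseteq \mf{sl}(V)^-$.

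Now $[\mf{g},\mf{g}]$ is a $G$-subalgebra of $\mf{sl}(V)^-$, hence by the second hypothesis it is either $0$ or all of $\mf{sl}(V)^-$. If $[\mf{g},\mf{g}] = \mf{sl}(V)^-$, then $\mf{sl}(V)^- \subseteq \mf{g}$; I would then argue $\mf{g} = \mf{sl}(V)^-$ or $\mf{g} = \mf{sl}(V)$ by looking at $\mf{g}/\mf{sl}(V)^-$, which is a $G$-submodule of $\mf{sl}(V)^+$ stable under the adjoint action of $\mf{sl}(V)^-$, forcing it (by irreducibility considerations and the bracket rule $[\mf{sl}(V)^-, \mf{sl}(V)^+] \subseteq \mf{sl}(V)^+$ together with the first hypothesis) to be $0$ or everything — so $\mf{g} \in \{\mf{sl}(V)^-, \mf{sl}(V)\}$, contradicting our assumption that $\mf{g}$ is neither. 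Therefore $[\mf{g},\mf{g}] = 0$, i.e.\ $\mf{g}$ is commutative, as desired.

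The main obstacle I anticipate is bookkeeping the bracket relations cleanly and, in particular, justifying rigorously that $[\mf{g},\mf{g}]$ sits inside $\mf{sl}(V)^-$ and that the "complement" piece $\mf{g} \cap \mf{sl}(V)^+$ (or the quotient) is forced to vanish — this is exactly where the hypothesis that $\Sym^2(V)$ and $\bw{V}$ share no isomorphic $G$-submodules is used, and getting the module-theoretic disjointness argument precisely right (as opposed to the naive "orthogonal complement" intuition) is the delicate point. The appeal to the classification of simple Lie algebras does not enter this proposition itself; it enters later when verifying the hypothesis "$\mf{sl}(V)^-$ has no proper non-zero $G$-subalgebras" in the concrete $\mf{so}(14)$ situation.
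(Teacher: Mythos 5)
Your overall framing is close, but the central claim you rely on --- that $[\mf g,\mf g]\subseteq\mf{sl}(V)^-$ for an arbitrary $G$-subalgebra $\mf g$ --- is false, and the sentence in which you try to justify it trails off without actually producing an argument. The cross terms $[\mf g^-,\mf g^+]$ land in $\mf{sl}(V)^+$, not $\mf{sl}(V)^-$, and there is nothing in the hypotheses that makes them vanish for a general $\mf g$. Concretely, $\mf g=\mf{sl}(V)$ is a $G$-subalgebra with $[\mf g,\mf g]=\mf{sl}(V)\not\subseteq\mf{sl}(V)^-$, so any argument that claims to show $[\mf g,\mf g]\subseteq\mf{sl}(V)^-$ without first restricting $\mf g$ is necessarily broken. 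The "disjointness" you invoke from the first hypothesis would tell you that a $G$-submodule cannot embed simultaneously in $\mf{sl}(V)^-$ and $\mf{sl}(V)^+$; it does not tell you that the $+$ component of $[\mf g,\mf g]$ is zero.

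The paper avoids this by working with $\mf g\cap\mf{sl}(V)^-$ rather than $[\mf g,\mf g]$. This intersection is visibly a $G$-subalgebra of $\mf{sl}(V)^-$, so by the second hypothesis it is $0$ or all of $\mf{sl}(V)^-$. If it is all of $\mf{sl}(V)^-$, then $\mf{sl}(V)^-\subseteq\mf g$ and the maximality of $\mf{sl}(V)^-$ (which the paper proves by observing that $\mf{sl}(V)^+$ is an irreducible $\mf{sl}(V)^-$-module) forces $\mf g\in\{\mf{sl}(V)^-,\mf{sl}(V)\}$, both excluded. If the intersection is $0$, then the first hypothesis is exactly what forces $\mf g\subseteq\mf{sl}(V)^+$: any irreducible $G$-submodule of $\mf g$ with nonzero projections to both $\mf{sl}(V)^-$ and $\mf{sl}(V)^+$ would embed in both, which is disallowed, while a nonzero projection to $\mf{sl}(V)^-$ alone is ruled out by $\mf g\cap\mf{sl}(V)^-=0$. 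Once $\mf g\subseteq\mf{sl}(V)^+$, the bracket of two symmetric elements is anti-symmetric, so $[\mf g,\mf g]\subseteq\mf{sl}(V)^-\cap\mf g=0$. Note that you do end up with $[\mf g,\mf g]\subseteq\mf{sl}(V)^-$, but only after establishing $\mf g\subseteq\mf{sl}(V)^+$ --- the order of quantifiers matters. Your reduction of the final claim to Corollary~\ref{cor:prim-ss} / Proposition~\ref{prop:prim} is correct.
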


\begin{proof}
Let $\mf{g}$ be a non-zero $G$-subalgebra of $\mf{sl}(V)$.  The intersection of
$\mf{g}$ with $\mf{sl}(V)^-$ is a $G$-subalgebra of $\mf{sl}(V)^-$ and
therefore either 0 or all of $\mf{sl}(V)^-$.  First assume that the
intersection is zero.  Since the spaces of symmetric and
anti-symmetric elements of $\mf{sl}(V)$ have no isomorphic $G$-submodules,
it follows that $\mf{g}$ is contained in the space of symmetric elements of
$\mf{sl}(V)$.  However, two symmetric elements bracket to an anti-symmetric
element.  It thus follows that all brackets in $\mf{g}$ vanish and so $\mf{g}$
is commutative.  Now assume that $\mf{g}$ contains all of $\mf{sl}(V)^-$.  It
is then a standard fact that $\mf{sl}(V)^-$ is a maximal subalgebra of
$\mf{sl}(V)$ and so $\mf{g}$ is either $\mf{sl}(V)^-$ or $\mf{sl}(V)$.  (To
see this, note that $\mf{sl}(V)=\mf{sl}(V)^- \oplus \mf{sl}(V)^+$ and so
to prove the maximality of $\mf{sl}(V)^-$ it suffices to show that
$\mf{sl}(V)^+$ is an irreducible representation of $\mf{sl}(V)^-$.  In
the orthogonal case this amounts to the fact that, as a representation of
$\mf{so}(V)$, the space $\Sym^2(V)/W$ is irreducible, where $W$ is the line
spanned by the orthogonal form on $V$.  The symplectic case is similar.)
\end{proof}

\section{The partial derivatives of $s$ have no linear syzygies --- completion
of proof}

We now complete the proof of Proposition~\ref{prop:nosyz}.  We return to our
previous notation.  We begin with the following:

\begin{proposition}
\label{prop:subalg}
Assume $k$ is algebraically closed.  The $G$-subalgebras of $\mf{sl}(V)$ are
exactly 0, $\mf{so}(V)$ and $\mf{sl}(V)$.
\end{proposition}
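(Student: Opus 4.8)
The plan is to deduce Proposition~\ref{prop:subalg} from the general machinery of \S\ref{s-liesub}, chiefly Proposition~\ref{prop:self-dual}, by checking that proposition's hypotheses for the representation at hand. Recall that $V$ is the $14$-dimensional irreducible representation of $G=S_8$ attached to the partition $(4,4)$. Like every representation of a symmetric group it is realizable over $\Q$, hence self-dual, and it is orthogonal; therefore $\mf{sl}(V)^-$ is isomorphic to $\mf{so}(V)$ as a Lie algebra and to $\bw{V}$ as a $G$-module, and the conclusion of Proposition~\ref{prop:self-dual} is exactly the statement to be proved. It then remains to verify three things: (i) $V$ is primitive; (ii) $\Sym^2(V)$ and $\bw{V}$ share no irreducible $G$-submodule; (iii) $\mf{sl}(V)^-$ has no proper nonzero $G$-subalgebra.

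Point (ii) will be immediate from Proposition~\ref{prop:decomp}: the constituents of $\Sym^2(V)$ correspond to partitions of $8$ with all parts even, those of $\bw{V}$ to partitions with all parts odd, and these two sets of partitions are disjoint. For (i) I would apply Corollary~\ref{cor:prim-test}. Since $\mf{sl}(V)$ is (isomorphic to) a $G$-submodule of $V^{\otimes 2}$, which is multiplicity free by Proposition~\ref{prop:decomp}, every nonzero $G$-submodule of $\mf{sl}(V)$ contains one of its irreducible constituents; these correspond to the partitions $(6,2)$, $(4,4)$, $(4,2,2)$, $(2,2,2,2)$, $(5,1,1,1)$, $(3,3,1,1)$ and have dimensions $20$, $14$, $56$, $14$, $35$, $56$, all at least $14=\dim V$. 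Hence $V$ is primitive.

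The substantive step is (iii). First, by Corollary~\ref{cor:prim-ss}, primitivity of $V$ forces every $G$-subalgebra of $\mf{sl}(V)$ to be semisimple. Next, $\bw{V}$ is the direct sum of the two irreducible $G$-modules attached to $(5,1,1,1)$ and $(3,3,1,1)$, of dimensions $35$ and $56$, so a proper nonzero $G$-subalgebra of $\mf{sl}(V)^-=\mf{so}(V)$ would be one of these two modules, hence a semisimple Lie algebra of dimension $35$ or $56$. I plan to exclude this by a dimension count based on Proposition~\ref{prop:iso} and Corollary~\ref{cor:prime-constraint}: a semisimple $G$-subalgebra $\mf{h}$ acts on $V$ as $V_0^{\oplus m}$ with $V_0=\bigotimes_i W_i$ over the simple factors $\mf{h}_i$ of $\mf{h}$, each $W_i$ a faithful irreducible $\mf{h}_i$-module, so $\prod_i \dim W_i=\dim V_0$ divides $\dim V=14$. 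Because $14=2\cdot 7$ is squarefree there are at most two simple factors, with $\dim W_i\in\{2,7,14\}$; listing the simple Lie algebras carrying a faithful irreducible representation of dimension $2$, $7$, or $14$ ($\mf{sl}_2$ in the first case; $\mf{sl}_2$, $\mf{so}_7$, $G_2$, $\mf{sl}_7$ in the second; $\mf{sl}_2$, $\mf{sp}_4$, $G_2$, $\mf{sp}_6$, $\mf{so}_{14}$, $\mf{sp}_{14}$, $\mf{sl}_{14}$ in the third) and adding dimensions in the allowed combinations, I expect to find that a nonzero semisimple $G$-subalgebra of $\mf{sl}(V)$ has dimension in $\{3,6,10,14,17,21,24,48,51,91,105,195\}$ --- in particular never $35$ or $56$. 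This rules out all intermediate subalgebras, so the hypotheses of Proposition~\ref{prop:self-dual} hold and the proposition follows.

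I expect step (iii), specifically this last dimension count, to be the main obstacle: it relies on the classification of simple Lie algebras and on knowing their small faithful representations. The one subtlety to watch is that $V$ need not coincide with the $\mf{h}$-module $V_0$ of Proposition~\ref{prop:iso}, only be a multiple of it, so that it is $\dim V_0$, a divisor of $14$, and not $\dim V=14$ itself that one constrains. The remaining steps are Young-diagram bookkeeping together with the general results already established in \S\ref{s-liesub}.
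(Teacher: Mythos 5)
Your proof is correct and has the same overall architecture as the paper's: verify primitivity of $V$ via Corollary~\ref{cor:prim-test} (all constituents of $\mf{sl}(V)\cong V^{\otimes 2}/k$ have dimension $\ge 14$), apply Proposition~\ref{prop:self-dual}, and reduce to showing $\mf{so}(V)=\bw{V}$ has no proper nonzero $G$-subalgebra, which must be one of the two irreducibles, of dimension $35$ or $56$. Where you diverge is in the last, substantive step. The paper first classifies the isotypic Lie algebras of dimension $35$ or $56$ (namely $\mf{sl}(6)$, $\mf{g}_2^4$, $\mf{so}(8)^2$, $\mf{sl}(3)^7$), rules out the three non-simple ones with the crude inequality $\dim V\ge d_0^n$ of Corollary~\ref{cor:prime-constraint}, and then spends a separate lemma (Lemma~\ref{lem:sl6}) on the remaining case $\mf{sl}(6)$, checking it has no $14$-dimensional faithful isotypic representation. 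You instead exploit the exact divisibility $\dim V_0\mid 14$ coming from Proposition~\ref{prop:iso}(1) uniformly: since $14$ is squarefree there are at most two simple factors, and classifying simple Lie algebras with a faithful irreducible of dimension $2$, $7$, or $14$ yields a short list of achievable dimensions that omits $35$ and $56$. This unifies the argument and dispenses with Lemma~\ref{lem:sl6}, at the modest cost of a slightly longer classification table; the paper's route front-loads the dimension-of-Lie-algebra side and only invokes representation dimensions at the end, which keeps the case analysis visually smaller. Both routes use the same structural results from \S\ref{s-liesub} and the same classification-theoretic input, and your lists of constituents, dimensions, and candidate algebras are accurate.
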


\begin{proof}
We begin by noting that any irreducible representation of the symmetric
group is defined over the reals (in fact, the rationals) and is therefore
orthogonal self-dual.  Thus $\mf{so}(V)=\mf{sl}(V)^-$ makes sense as a
$G$-subalgebra.

For our particular representation $V$, Proposition~\ref{prop:decomp} shows
that $\Sym^2(V)$ has five irreducible submodules of dimensions 1, 14, 14, 20
and 56, while $\bw{V}$ has two irreducible submodules of dimensions 35 and 56.
Furthermore, none of these seven irreducibles are isomorphic.  As all
irreducible submodules of $\mf{sl}(V)$ have dimension at least that of $V$
(which in this case is 14), we see from Corollary~\ref{cor:prim-test} that
$V$ is primitive.  (Note that the one-dimensional representation occurring
in $\Sym^2(V)$ is the center of $\mf{gl}(V)$ and does not occur in
$\mf{sl}(V)$.)

As $V$ is primitive, multiplicity free and self-dual, we can apply
Proposition~\ref{prop:self-dual}.  This shows that to prove the present
proposition we need only show that $\mf{so}(V)$ has no proper non-zero
$G$-subalgebras.  Thus assume that $\mf{g}'$ is a proper non-zero
$G$-subalgebra of $\mf{so}(V)$.  As $\mf{so}(V)=\bw{V}$ has two
irreducible submodules we see that $\mf{g}'$ must be one of these two
irreducibles.  In particular, this shows that $\mf{g}'$ must be prime and so
therefore isotypic.  Now, by examining the list of all simple Lie algebras,
 we
see that there are exactly four isotypic Lie algebras of dimension either 35
or 56:
\begin{displaymath}
\mf{g}_2^4, \qquad \mf{so}(8)^2, \qquad \mf{sl}(3)^7, \qquad \mf{sl}(6).
\end{displaymath}
The minimal dimensions of faithful representations of $\mf{g}_2$, $\mf{so}(8)$
and $\mf{sl}(3)$ are 7, 8 and 3.  As $7^4$, $8^2$ and $3^7$ are all bigger
than $\dim{V}$, Corollary~\ref{cor:prime-constraint} rules out the first
three Lie algebras above.  (One can also rule out $\mf{g}_2^4$ and
$\mf{sl}(3)^7$ by noting that the alternating group $A_8$ does not act
non-trivially on them.) We rule out $\mf{sl}(6)$ by using
Proposition~\ref{prop:iso} and noting that $\mf{sl}(6)$ has no faithful 14
dimensional isotypic representation --- this is proved in Lemma~\ref{lem:sl6}
below.  (One can also rule out $\mf{sl}(6)$ by noting that $A_8$ does not
act on it.)  This shows that $\mf{g}'$ cannot exist, and proves the
proposition.
\end{proof}

\begin{lemma}
\label{lem:sl6}
The Lie algebra $\mf{sl}(6)$ has exactly two non-trivial irreducible
representations of dimension $\le 14$:  the standard representation and its
dual.  It has no 14-dimensional faithful isotypic representation.
\end{lemma}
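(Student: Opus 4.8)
The plan is to classify the irreducible representations of $\mf{sl}(6)$ of dimension at most $14$ using the Weyl dimension formula, and then show that none of the candidates for a $14$-dimensional faithful isotypic representation actually has dimension $14$. Recall that the irreducible representations of $\mf{sl}(6)=\mf{sl}_6$ are indexed by dominant weights $\lambda=(a_1,\dots,a_5)$ with $a_i\ge 0$; I would write down the Weyl dimension formula for $A_5$ and simply enumerate the small ones. The fundamental representations have dimensions $6,15,20,15,6$, so already $\Lambda^2$ and $\Lambda^4$ (dimension $15$) are too big, as is the adjoint (dimension $35$). The only representations with a single nonzero $a_i$ equal to $1$ that survive the bound are the standard $V_0$ (the representation with highest weight $\varpi_1$) and its dual $V_0^*$ (highest weight $\varpi_5$), each of dimension $6$; any $\lambda$ with $a_i\ge 2$ for some $i$, or with two nonzero entries, gives dimension well above $14$ (the smallest such, $\varpi_1+\varpi_5$ with dimension $35$, or $2\varpi_1$ with dimension $21$). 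This establishes the first sentence of the lemma.

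For the second sentence, I would invoke Proposition~\ref{prop:iso}(1),(2): if $W$ is a $14$-dimensional faithful isotypic representation of $\mf{sl}(6)$, then $W\cong V_1^{\oplus m}$ for some irreducible faithful $V_1$, so $14=m\cdot\dim V_1$. Since $V_1$ is faithful it is nontrivial, and by the first part of the lemma $\dim V_1\in\{6\}$ — wait, more carefully, $\dim V_1$ is the dimension of a nontrivial irreducible (necessarily faithful since $\mf{sl}_6$ is simple), so $\dim V_1 \le 14$ forces $\dim V_1 = 6$ (the standard or its dual). But $6 \nmid 14$, so no such $m$ exists. Hence there is no $14$-dimensional faithful isotypic representation of $\mf{sl}(6)$.

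The main obstacle — if any — is purely bookkeeping: making sure the enumeration of dominant weights with dimension $\le 14$ is genuinely complete rather than just listing the obvious ones. The cleanest way to make this airtight is to observe that any irreducible representation of $\mf{sl}_6$ contains a nonzero weight-$\varpi_i$ vector in its weight decomposition only via restriction, but more directly: the minimal faithful representation of $\mf{sl}_6$ has dimension $6$, and the next smallest irreducibles (after the two $6$-dimensional ones) have dimension $15$ ($\Lambda^2$ and $\Lambda^4$) and $20$ ($\Lambda^3$), with everything else larger still — this follows from the Weyl dimension formula since $\dim V_\lambda$ is a product of factors $\frac{a_i+\cdots+a_j+(j-i+1)}{j-i+1}\ge 1$, each $>1$ as soon as any $a_k$ in the range is positive, and one checks the few cases where at most one factor-numerator exceeds its denominator. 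I expect this to be routine enough that I would state it as "a short computation with the Weyl dimension formula" rather than spell it out in full.
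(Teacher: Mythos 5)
Your proposal is correct and matches the paper's approach: both enumerate the low-dimensional irreducibles of $\mf{sl}(6)$ via the Weyl dimension formula, find that only the standard representation and its dual have dimension at most $14$, and then conclude from $6 \nmid 14$. The paper formalizes the enumeration you defer to a ``short computation'' by first proving the monotonicity $\dim V_{\lambda+\lambda'} \ge \max(\dim V_\lambda, \dim V_{\lambda'})$, after which it suffices to check only the fundamental weights together with $2\varpi_1$, $\varpi_1+\varpi_5$, and $2\varpi_5$.
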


\begin{proof}
For a dominant weight $\lambda$ let $V_{\lambda}$ denote the irreducible
representation with highest weight $\lambda$.  If $\lambda$ and $\lambda'$
are two dominant weights then a general fact valid for any semi-simple
Lie algebra states
\begin{displaymath}
\dim{V_{\lambda+\lambda'}} \ge \max(\dim{V_{\lambda}}, \dim{V_{\lambda'}}).
\end{displaymath}
(To see this, recall the Weyl dimension formula:
\begin{displaymath}
\dim{V_{\lambda}}=\prod_{\alpha^{\vee}>0}
\frac{\langle \lambda+\rho, \alpha^{\vee} \rangle}{\langle \rho,
\alpha^{\vee} \rangle},
\end{displaymath}
where $\rho$ is half the sum of the positive roots and the product is taken
over the positive co-roots $\alpha^{\vee}$.  Then note that $\langle
\lambda, \alpha^{\vee} \rangle$ is positive for any dominant weight $\lambda$
and any positive co-root $\alpha^{\vee}$.  Thus $\dim{V_{\lambda+\lambda'}} \ge
\dim{V_{\lambda}}$.)

Now, let $\varpi_1, \ldots, \varpi_5$ be the fundamental weights for
$\mf{sl}(6)$.  The representation $V_{\varpi_i}$ is just $\bigwedge^i V$,
where $V$ is the standard representation.  For $2 \le i \le 4$ the space
$V_{\varpi_i}$ has dimension $\ge 15$.  Furthermore, a simple calculation
shows that
\begin{displaymath}
\dim{V_{2 \varpi_1}}=21, \qquad \dim{V_{\varpi_1+\varpi_5}}=168, \qquad
\dim{V_{2 \varpi_5}}=21.
\end{displaymath}
(Note that $V_{2 \varpi_1}$ is $\Sym^2(V)$, while $V_{2 \varpi_5}$ is its
dual.  This shows why they are 21-dimensional.  To compute the dimension of
$V_{\varpi_1+\varpi_5}$ we use the formula for the dimension of the relevant
Schur functor, \cite[Ex.~6.4]{FH}.) Thus only $V_{\varpi_1}$
and $V_{\varpi_5}$ have dimension at most 14, and
they each have dimension 6.  Since 6 does not divide 14 we find that there are
no non-trivial 14-dimensional isotypic representations.
\end{proof}

\begin{remark}
We can prove Proposition~\ref{prop:subalg} whenever $L$ has cardinality at most 14.  Perhaps
it is true for all $L$.
\end{remark}

We now have the following:

\begin{proposition}
\label{prop:so-ann}
The only element of $\Sym^3(V)$ annihilated by $\mf{so}(V)$ is zero.
\end{proposition}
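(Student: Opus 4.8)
The plan is to prove the sharper statement that the zero-weight space of $\Sym^3(V)$ is already zero. Since any element of $\Sym^3(V)$ annihilated by $\mf{so}(V)=\mf{sl}(V)^-$ is in particular annihilated by a Cartan subalgebra $\mf{h}\subset\mf{so}(V)$, hence is a zero-weight vector, this suffices. We may assume $k$ is algebraically closed (as in Proposition~\ref{prop:subalg}; extending scalars does not affect the assertion). Recall that $\mf{so}(V)$ acts on $V$ as the standard $14$-dimensional representation of a semisimple Lie algebra of type $D_7$. Fixing a split Cartan subalgebra $\mf{h}$, the weights of $V$ are $\pm\epsilon_1,\dots,\pm\epsilon_7$, where $\epsilon_1,\dots,\epsilon_7$ form a basis of $\mf{h}^*$; each of these occurs with multiplicity one, and --- this is the one place where the \emph{even} dimension of $V$ intervenes --- the weight $0$ does \emph{not} occur.

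First I would observe that every weight of $\Sym^3(V)$ is a sum $w_1+w_2+w_3$ of three weights of $V$, i.e. $w_a=\sigma_a\epsilon_{i_a}$ with $\sigma_a\in\{\pm1\}$. The key --- and wholly elementary --- step is to check that no such sum is zero. Since $\epsilon_1,\dots,\epsilon_7$ are linearly independent, this is a short case analysis on the multiset $\{i_1,i_2,i_3\}$: if the three indices are distinct, the coefficient of $\epsilon_{i_1}$ in the sum is $\sigma_1\ne0$; if exactly two coincide, say $i_1=i_2\ne i_3$, the coefficient of $\epsilon_{i_3}$ is $\sigma_3\ne0$; and if all three coincide the sum equals $(\sigma_1+\sigma_2+\sigma_3)\epsilon_{i_1}$ with $\sigma_1+\sigma_2+\sigma_3\in\{\pm1,\pm3\}$. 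In every case the sum is non-zero, so $\Sym^3(V)$ has trivial zero-weight space, and the proposition follows.

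There is no genuinely hard step; the only point requiring care is the input that $0$ is not a weight of $V$, which is precisely the statement that $\dim V$ is even (for $\mf{so}$ of odd dimension the vector representation does carry a zero weight, and one would have to argue slightly differently). For completeness I would mention two alternative routes. One may instead invoke the first fundamental theorem of invariant theory for the orthogonal group: identifying $\Sym^3(V)$ with $\Sym^3(V^*)$ via the invariant symmetric form, the $\mathrm{O}(V)$-invariants --- equivalently, for a single vector variable, the $\mathrm{SO}(V)$-invariants, equivalently the $\mf{so}(V)$-invariants --- among polynomial functions on $V$ are generated by the quadratic form, hence there are none in odd degree. Alternatively one may use the harmonic decomposition $\Sym^3(V)\cong\mc{H}\oplus V$ of $\mf{so}(V)$-modules, where $\mc{H}$ is the space of harmonic cubics, in which neither summand is trivial. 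I would present the weight-counting argument, as it is the most self-contained.
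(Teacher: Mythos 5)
Your proof is correct, but it takes a genuinely different route from the paper's.  The paper picks an orthonormal basis $\{x_i\}$ of $V$, observes that $\mf{so}(V)$-invariance of $s$ is equivalent to the system of PDEs $x_i\partial_j s = x_j\partial_i s$ for all $i,j$, and then solves this system directly for cubics: a short computation shows that any solution must involve each pair $x_i, x_j$ only to even degree, and a nonzero cubic cannot have every variable appearing to an even power.  (A remark after the proof reinterprets this computation as finding invariants of a copy of $\mf{so}(2)\subset\mf{so}(V)$, which is the germ of your idea.)  You go straight to the full weight theory of $D_7$: the weights of $V$ are $\pm\epsilon_1,\dots,\pm\epsilon_7$ with zero absent (the even-rank phenomenon), and a three-term sum of such weights can never vanish, so the zero-weight space of $\Sym^3(V)$ --- let alone the $\mf{so}(V)$-invariant subspace --- is trivial.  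Your argument is cleaner and actually proves a stronger statement (no zero-weight vectors at all, not merely no full invariants), and your case analysis on $\{i_1,i_2,i_3\}$ is airtight; it also immediately generalizes to all odd symmetric powers of $V$.  The paper's computation is more elementary in that it uses no structure theory of $\mf{so}(2n)$ beyond the definition, and is self-contained modulo the existence of an orthonormal basis.  Your alternative routes (FFT for $\mathrm{O}(V)$ acting on one vector variable, or the harmonic decomposition $\Sym^3(V)\cong\mc{H}_3\oplus V$) are both also correct and make the same point at a higher level of abstraction.  Any of the three would be an acceptable substitute for the paper's argument.
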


\begin{proof}
As mentioned, $V$ has a canonical non-degenerate symmetric inner product.
Pick an orthonormal basis $\{x_i\}$ of $V$ and let $\{x_i^*\}$ be the dual
basis of $V^*$.  We can think of $\Sym(V)$ as the polynomial ring in the
$x_i$.  The space $\mf{so}(V)$ is spanned by elements of the form
$E_{ij}=x_i \otimes x_j^* - x_j \otimes x_i^*$.   Recall that, for an
element $s$ of $\Sym(V)$, the element $x_i \otimes x_j^*$ of $\End(V)$
acts on $s$ by $x_i \partial_j s$, where $\partial_j$ denotes differentiation
with respect to $x_j$.  Thus we see that $s$ is annihilated by $E_{ij}$ if
and only if it satisfies the equation
\begin{equation}
\label{eq1}
x_i \partial_j s = x_j \partial_i s.
\end{equation}
Therefore $s$ is annihilated by all of $\mf{so}(V)$ if and only if the above
equation holds for all $i$ and $j$.

Let $s$ be an element of $\Sym^3(V)$.  We now consider \eqref{eq1} for a fixed
$i$ and $j$.  Write
\begin{displaymath}
s=g_3(x_j)+g_2(x_j) x_i+g_1(x_j) x_i^2+g_0(x_j) x_i^3
\end{displaymath}
where each $g_i$ is a polynomial in $x_j$ whose coefficients are polynomials
in the $x_k$ with $k \ne i, j$.  Note that $g_0$ must be a constant by
degree considerations.  We have
\begin{displaymath}
\begin{split}
x_i \partial_j s &=  g_3'(x_j) x_i + g_2'(x_j) x_i^2 + g_1'(x_j) x_i^3 \\
x_j \partial_i s &= x_j g_2(x_j)+2 x_j g_1(x_j) x_i+3 x_j g_0(x_j) x_i^2.
\end{split}
\end{displaymath}
We thus find
\begin{displaymath}
g_2=0, \quad 2x_j g_1=g_3', \quad 3 x_j g_0=g_2', \quad g_1'=0.
\end{displaymath}
{}From this we deduce that $g_0=g_2=0$ and that $g_1$ is determined from
$g_3$.  The constraint on $g_3$ is that it must satisfy
\begin{equation}
\label{eq2}
g_3'(x_j)=x_j g_3''(x_j).
\end{equation}
Putting
\begin{displaymath}
g_3(x_j)=a+b x_j+c x_j^2+d x_j^3
\end{displaymath}
we see that \eqref{eq2} is equivalent to $b=d=0$.  We thus have
\begin{displaymath}
g_3(x_j)=a+c x_j^2, \qquad \textrm{and} \qquad g_1(x_j)=c
\end{displaymath}
and so
\begin{displaymath}
s = a + c (x_i^2+ x_j^2)
\end{displaymath}
is the general solution to \eqref{eq1}.

We thus see that if $s$ satisfies \eqref{eq1} for a particular $i$ and $j$
then $x_i$ and $x_j$ occur in $s$ with only even powers.  Thus if $s$
satisfies \eqref{eq1} for all $i$ and $j$ then all variables appear to an
even power.  This is impossible, unless $s=0$, since $s$ has degree three.
Thus we see that zero is the only solution to \eqref{eq1} which holds for
all $i$ and $j$.
\end{proof}

\begin{remark}
The above computational proof can be made more conceptual.  By considering
the equation \eqref{eq1} for a fixed $i$ and $j$ we are considering the
invariants of $\Sym^3(V)$ under a certain copy of $\mf{so}(2)$ sitting inside
of $\mf{so}(V)$.  The representation $V$ restricted to $\mf{so}(2)$ decomposes
as $V' \oplus T$ where $V'$ is the standard representation of $\mf{so}(2)$
and $T$ is a 12-dimensional trivial representation of $\mf{so}(2)$.  We
then have
\begin{displaymath}
\Sym^3(V)^{\mf{so}(2)}=\bigoplus_{i=0}^3 \Sym^i(V')^{\mf{so}(2)} \otimes
\Sym^{3-i}(T).
\end{displaymath}
Finally, our general solution to \eqref{eq1} amounts to the fact that the
ring of invariant $\Sym(V')^{\mf{so}(2)}$ is generated by the norm form
$x_i^2+x_j^2$.
\end{remark}

We can now prove Proposition~\ref{prop:l-zero}, which will establish
Proposition~\ref{prop:nosyz}.

\begin{proof}[Proof of Proposition~\ref{prop:l-zero}]
To prove $\mf{g}=0$ we may pass to the algebraic closure of $k$; we thus
assume $k$ is algebraically closed.  By Proposition~\ref{prop:subalg}, the Lie
algebra $\mf{g}$ must be 0, $\mf{so}(V)$ or $\mf{sl}(V)$.  By
Proposition~\ref{prop:so-ann} $\mf{g}$ cannot be $\mf{so}(V)$ or $\mf{sl}(V)$
since it annihilates $s$ and $s$ is non-zero.  Thus $\mf{g}=0$.
\end{proof}

\section{Completion of the proof of Theorem~\ref{mainthm}}
\label{s:fin}

We now complete the proof of Theorem~\ref{mainthm}.  Before doing so, we need to
review some commutative algebra.  In this section we work over an algebraically
closed field $k$ of characteristic zero.

\subsection{Betti numbers of modules over polynomial rings}

Let $P$ be a graded polynomial ring over $k$ in finitely many indeterminates,
each of positive degree.  Let $M$ be a finite $P$-module.  One can then find a
surjection $F \to M$ with $F$ a finite free module having the following
property: if $F' \to M$ is another surjection from a finite free module then
there is a surjection $F' \to F$ making the obvious diagram commute.  We call
$F \to M$ a \emph{free envelope} of $M$.  It is unique up to non-unique
isomorphism.  As an example, if $M$ is generated by its degree $d$ piece then
we can take $F$ to be $P[-d] \otimes M^{(d)}$ where the tensor product is over
$k$ and $P[-d]$ is the free $P$-module with one generator in degree $d$.

Let $M$ be a finite free $P$-module.  We can build a resolution of $M$ by using
free envelopes:
\begin{displaymath}
\cdots \to F_2 \to F_1 \to F_0 \to M \to 0
\end{displaymath}
Here $F_0$ is the free envelope of $M$ and $F_{i+1}$ is the free envelope of
$\ker(F_i \to F_{i-1})$.  Define integers $b_{i, j}$ by
\begin{displaymath}
F_i=\bigoplus_{j \in \Z} P[-i-j]^{\oplus b_{i, j}}.
\end{displaymath}
These integers are called the \emph{Betti numbers} of $M$ and the collection of
them all the \emph{Betti diagram} of $M$.  They are independent of the choice of
free envelopes, as $b_{i, j}$ is also the dimension of the $j$th graded piece of
$\Tor_i^P(M, P/I)$, where $I$ is ideal of positive degree elements.  The Betti
numbers have the following properties:
\begin{itemize}
\item[(B1)] We have $b_{i, j}=0$ for all but finitely many $i$ and $j$.  This is
because each $F_i$ is finitely generated and $F_i=0$ for $i$ large by Hilbert's
theorem on syzygies.
\item[(B2)] We have $b_{i, j}=0$ for $i<0$.  This follows from the definition.
\item[(B3)] If $b_{i_0, j}=0$ for $j \le j_0$ then $b_{i, j}=0$ for all $i
\ge i_0$ and $j \le j_0$.  This follows from the fact that if $d$ is the lowest
degree occurring in a module $M$ and $F \to M$ is a free envelope then $F^{(d)}
\to M^{(d)}$ is an isomorphism, and thus the lowest degree occurring in
$\ker(F \to M)$ is $d+1$.
\item[(B4)] In particular, if $M$ is in non-negative degrees then $b_{i, j}=0$
for $j<0$.
\item[(B5)] Let $f(k)=\dim{M^{(k)}}$ (resp.\ $g(k)=\dim{P^{(k)}}$) denote the
Hilbert function of $M$ (resp.\ $P$). Then
\begin{displaymath}
f(k)=\sum_{i, j \in \Z} (-1)^i \cdot b_{i, j} \cdot g(k-i-j).
\end{displaymath}
This follows by taking the Euler characteristic of the $k$th graded piece of
$F_{\bullet} \to M$.
\end{itemize}
In particular we see that if $M$ is in non-negative degrees then its Betti
diagram is contained in a bounded subset of the first quadrant.

\subsection{Betti numbers of graded algebras}

Let $R$ be a finitely generated graded $k$-algebra, which we assume for
simplicity to be generated by its degree one pice.  We let $P=\Sym(R^{(1)})$
be the graded polynomial algbera on the first graded piece.  We have a
natural surjective map $P \to R$ and so $R$ is a $P$-module.  We can thus speak
of the Betti numbers of $R$ as a $P$-module.  We call these the Betti numbers
of $R$.

Assume now that the ring $R$ is Gorenstein and a domain.  The canonical module
$\omega_R$ of $R$ is then naturally a graded module.  Furthermore, there exists
an integer $a$, called the \emph{$a$-invariant} of $R$, such that $\omega_R$ is
isomorphic to $R[a]$.  We now have the following important property of the
Betti numbers of $R$:
\begin{itemize}
\item[(B6)] We have $b_{i, j}=b_{r-i, d+a-j}$ where $d=\dim{R}$ is the Krull
dimension of $R$, $r=\dim{P}-\dim{R}$ is the codimension of $\Spec(R)$ in
$\Spec(P)$ and $a$ is the $a$-invariant of $R$.
\end{itemize}
No doubt this formula appears in the literature, but we will derive it here for completeness.   
We have $\mathrm{Ext}^i_P(R, \omega_P) \cong \omega_R$ if 
$i = r$ and $0$ if $i \neq r$.   If $n$ is the dimension of $P$, then $\omega_P \cong P[-n]$.  Since $R$ is Gorenstein we have 
$\omega_R \cong R[a]$.
Therefore we obtain a minimal free resolution $G_\bullet$ of $R[a]$ 
by $G_i$  = $\mathrm{Hom}_P(F_{r-i}, P[-n])$.
We have $G_\bullet[-a]$ is a minimal free resolution of $R$, and by uniqueness of the resolution we therefore have $G_i[-a] \cong F_i$ for each $i$.  Now $G_i[-a] \cong \oplus_{j'} P[-n-r+i+j'-a]$, and so 
$$\oplus_{j'} P[-n+r-i+j'-a]^{b_{r-i,j'}} \cong \oplus_j P[-i-j]^{b_{i,j}}.$$
Equating components of the same degree gives $-n+r-i+j'-a = -i-j$, or 
$j' = n-r + a - j$.  Hence $b_{i,j} = b_{r-i, n-r+a-j} = b_{r-i, d+a-j}$.

\subsection{Completion of the proof of Theorem~\ref{mainthm}}
\label{s:betti}

We now return to our previous notation.  Thus $L$ is a fixed eight element set,
$R=R_L$, $k$ is a field of characteristic zero, \etc\ We begin with the
following:

\begin{proposition}
\label{prop:goren}
The ring $R$ is Gorenstein with $a$-invariant $-2$.
\end{proposition}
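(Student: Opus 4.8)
The plan is to verify the two assertions—Gorensteinness and the value of the $a$-invariant—by combining what we already know about $R = R_8$ with standard facts about graded rings. First I would recall that $R$ is the homogeneous coordinate ring of the projective GIT quotient $M_8 = (\P U)^8 \cq \GL(U)$, which is known to be (geometrically) normal and Cohen--Macaulay; indeed, invariant rings of reductive groups acting on polynomial rings are Cohen--Macaulay by the Hochster--Roberts theorem, and since $M_8$ is a normal projective variety of dimension $5$, the ring $R$ is a $6$-dimensional normal domain, so it suffices to check the Gorenstein property and compute the canonical module. The cleanest route is via the Hilbert series: we are given
\begin{displaymath}
H(t) = \frac{1 + 8t + 22 t^2 + 8 t^3 + t^4}{(1-t)^6},
\end{displaymath}
and the numerator $h(t) = 1 + 8t + 22t^2 + 8t^3 + t^4$ is visibly palindromic of degree $4$, i.e.\ $h(t) = t^4 h(1/t)$.

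The key step is then to invoke Stanley's criterion: a Cohen--Macaulay graded domain (more generally a CM graded domain which is a ``nice'' quotient) is Gorenstein if and only if its $h$-vector is symmetric. Since $R$ is CM of dimension $6$ and its $h$-polynomial has symmetric coefficient vector $(1,8,22,8,1)$, Stanley's theorem gives that $R$ is Gorenstein. For the $a$-invariant, recall that for a Cohen--Macaulay graded ring of Krull dimension $d$ with Hilbert series $H(t) = h(t)/(1-t)^d$, the $a$-invariant equals $\deg h(t) - d$. Here $\deg h = 4$ and $d = 6$, so $a = 4 - 6 = -2$, which is exactly the claim. Equivalently, one can read off from the Gorenstein symmetry $b_{i,j} = b_{r-i, d+a-j}$ (property (B6)) together with the known behaviour at the two ends of the resolution, but the Hilbert-series computation is the most direct.

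The main obstacle—such as it is—is justifying that $R$ is Cohen--Macaulay and normal in the first place, so that Stanley's criterion and the formula $a = \deg h - d$ actually apply. Normality of $M_8$ (hence of the cone, once we also note it is CM) follows from general GIT: the ring of invariants $\left[\bigoplus_d (\Sym^d U^*)^{\otimes 8}\right]^{\SL(U)}$ is a direct summand (as a module over itself) of the polynomial ring $\bigoplus_d (\Sym^d U^*)^{\otimes 8}$, which is integrally closed, so the invariant ring is normal; and Cohen--Macaulayness of a ring of invariants of a linearly reductive group acting on a polynomial ring is precisely Hochster--Roberts. Alternatively, in characteristic zero one can cite that $M_8$ is known to have (at worst) rational, hence Cohen--Macaulay, singularities. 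Once these structural facts are in hand, the symmetry of the $h$-vector and the degree count finish the proof; I would therefore organize the write-up as: (1) cite normality and Cohen--Macaulayness of $R$; (2) observe the $h$-polynomial is palindromic of degree $4$ and apply Stanley's criterion to conclude Gorenstein; (3) read off $a = 4 - 6 = -2$.
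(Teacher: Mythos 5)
Your proof follows essentially the same route as the paper: Cohen--Macaulayness via Hochster--Roberts, Gorensteinness via Stanley's criterion applied to the palindromic $h$-polynomial $1+8t+22t^2+8t^3+t^4$, and the $a$-invariant from $\deg h - d = 4 - 6 = -2$. The only cosmetic difference is that you also spell out normality/domain-ness (needed to invoke Stanley's criterion in the form for graded domains), which the paper leaves implicit.
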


\begin{proof}
We first recall a theorem of Hochster-Roberts \cite[Theorem~6.5.1]{BrunsHerzog}:
if $V$ is a representation of the reductive group $G$ (over a field of
characteristic zero) then the ring of invariants $(\Sym{V})^G$ is
Cohen-Macaulay.  As our ring $R$ can be realized in this manner, with $V$ being
the space of $2 \times 8$ matrices and $G=\SL(2) \times T$, where $T$ is the
maximal torus in $\SL(8)$, we see that $R$ is Cohen-Macaulay.  We now recall
a theorem of Stanley \cite[Corollary~4.4.6]{BrunsHerzog}:  if $R$ is a
Cohen-Macaulay ring generated in degree one with Hilbert series $f(t)/(1-t)^d$,
where $d$ is the Krull dimension of $R$, then $R$ is Gorenstein if and only
if the polynomial $f$ is symmetric.  Furthermore, if $f$ is symmetric then
the $a$-invariant of $R$ is given by $\deg{f}-d$.  Going back to our situation,
the Hilbert series of our ring was given in \S \ref{s:1-1}.  The numerator
is symmetric of degree four and the denominator has degree six.  We thus see
that $R$ is Gorenstein with $a=-2$.
\end{proof}

We can now deduce the Betti diagram of $R$:

\begin{proposition}
\label{prop:betti}
The Betti diagram of $R$ is given by:
\vskip 2ex
\begin{center} \rm
\begin{tabular}{|c||c|c|c|c|c|c|c|c|c|}
\hline
& \hskip .2em 0 \hskip .2em & 1 & 2 & 3 & 4 & 5 & 6 & 7 &
\hskip .2em 8 \hskip .2em \\
\hline
\hline
0 & 1 & 0 & 0 & 0 & 0 & 0 & 0 & 0 & 0 \\
\hline
1 & 0 & 14 & 0 & 0 & 0 & 0 & 0 & 0 & 0 \\
\hline
2 & 0 & 0 & 175 & 512 & 700 & 512 & 175 & 0 & 0 \\
\hline
3 & 0 & 0 & 0 & 0 & 0 & 0 & 0 & 14 & 0 \\
\hline
4 & 0 & 0 & 0 & 0 & 0 & 0 & 0 & 0 & 1 \\
\hline
\end{tabular}
\end{center}
\vskip 2ex
The $i$-axis is horizontal and the $j$-axis vertical.
All $b_{i, j}$ outside of the above range are zero.
\end{proposition}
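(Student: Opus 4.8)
The plan is to deduce the entire diagram from just three facts already in hand --- that $R$ is Gorenstein with $a$-invariant $-2$ (Proposition~\ref{prop:goren}), that $\dim I^{(2)}=14$ (Proposition~\ref{prop:quad}), and that the $14$ quadrics have no linear syzygies (Proposition~\ref{prop:nosyz}) --- together with the formal properties (B1)--(B6) and the known Hilbert series. First I would record the numerology: $P=\Sym(V)$ has dimension $14$, the Krull dimension of $R$ is $d=6$ (the denominator of the Hilbert series), so $\Spec R$ has codimension $r=8$ in $\Spec P$; since $R$ is the ring of invariants of a polynomial ring under a reductive group it is a domain, and being Cohen--Macaulay (Hochster--Roberts, as in the proof of Proposition~\ref{prop:goren}) it has $\mathrm{pd}_P R = r = 8$ by Auslander--Buchsbaum, so $b_{i,j}=0$ for $i\notin\{0,\dots,8\}$. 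The Gorenstein symmetry (B6) then reads $b_{i,j}=b_{8-i,\,4-j}$, and combined with (B4) this also forces $b_{i,j}=0$ for $j\notin\{0,\dots,4\}$; so the diagram is confined to the displayed $9\times 5$ box.

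Next I would enter the values known directly. Standard gradedness gives $b_{0,0}=1$ and $b_{0,j}=0$ for $j\ne 0$; the ideal $I$ has no linear part, so $b_{1,0}=0$; Proposition~\ref{prop:quad} gives $b_{1,1}=\dim I^{(2)}=14$; and Proposition~\ref{prop:nosyz} is precisely the statement $b_{2,1}=0$. Now (B3) propagates zeros: from $b_{1,0}=0$ we get $b_{i,0}=0$ for all $i\ge 1$, so the row $j=0$ is $1,0,\dots,0$; and from $b_{2,0}=b_{2,1}=0$ we get $b_{i,1}=0$ for all $i\ge 2$, so the row $j=1$ is $0,14,0,\dots,0$. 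Dualizing via (B6) then fills in the row $j=4$ as $0,\dots,0,1$ and the row $j=3$ as $0,\dots,0,14,0$ with the $14$ in column $i=7$.

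It remains to determine the row $j=2$, which by (B6) is symmetric under $i\mapsto 8-i$. Since $b_{0,2}=b_{8,2}=0$ already, the only unknowns are $b_{1,2}=b_{7,2}$, $b_{2,2}=b_{6,2}$, $b_{3,2}=b_{5,2}$ and $b_{4,2}$, and every other $b_{i,j}$ with $i+j\le 6$ has already been shown to vanish. These I would pin down from (B5), i.e.\ from
\begin{displaymath}
\sum_{i,j}(-1)^i b_{i,j}\,t^{i+j} \;=\; (1+8t+22t^2+8t^3+t^4)(1-t)^{8},
\end{displaymath}
which is the known Hilbert series of $R$ multiplied by $(1-t)^{14}$. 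Matching the coefficients of $t^3,t^4,t^5,t^6$ on the two sides then gives $b_{1,2}=0$, $b_{2,2}=175$, $b_{3,2}=512$, $b_{4,2}=700$; the remaining coefficients agree automatically by the Gorenstein symmetry, which serves as a consistency check. This is exactly the claimed diagram, and as a byproduct one reads off $b_{1,j}=0$ for $j\ge 2$, i.e.\ that $I$ is generated by the $14$ quadrics.

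I do not expect a genuine obstacle in this section: all of the conceptual content sits in Propositions~\ref{prop:goren}, \ref{prop:quad} and \ref{prop:nosyz}, and what is left is bookkeeping to confirm that the known vanishing, the Gorenstein symmetry, and the Hilbert polynomial leave exactly the right amount of freedom. The two places to be careful are getting the Auslander--Buchsbaum/Gorenstein numerology ($r=8$, $d+a=4$) right, and verifying that nothing other than the four row-$j=2$ entries contributes to the coefficients of $t^3,t^4,t^5,t^6$.
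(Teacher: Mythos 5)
Your proposal is correct and follows essentially the same route as the paper: establish the Gorenstein symmetry $b_{i,j}=b_{8-i,4-j}$, fill in the $j=0,1$ rows from $b_{0,0}=1$, $b_{1,1}=14$, $b_{2,1}=0$ (Proposition~\ref{prop:nosyz}) and (B3), dualize for $j=3,4$, then solve for the $j=2$ row from the Hilbert series via (B5). The only cosmetic differences are that you invoke Auslander--Buchsbaum to bound the projective dimension (the paper gets $i\le 8$ directly from the symmetry and (B2)/(B4)), and you read off $b_{1,2}=0$ from the $t^3$ coefficient of (B5) rather than from Proposition~\ref{prop:i2-i3} as the paper parenthetically suggests.
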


\begin{proof}
We first note that (B6) gives $b_{8-i, 4-j}=b_{i, j}$ as $r=8$, $d=6$ and
$a=-2$ in our situation.  We thus have the symmetry of the table.
Now, by (B2) and (B4) we have $b_{i, j}=0$ if either $i$ or $j$ is negative.
We thus see that $b_{i, j}=0$ if $i>8$ or $j>4$ by symmetry.  Next, observe
that $P \to R$ is the free envelope of $R$, where $P=\Sym(V)$.  This gives the
$i=0$ column of the table.  We now look at the $i=1$ column.  We know that the
14 generators have no linear relations and so $b_{1, 0}=0$.  By (B3) we have
$b_{i, 0} =0$ for $i \ge 1$.  We also know that there are 14 quadric relations
and so $b_{1, 1}=14$.  We now look at the $i=2$ column of the table.  We have
proved (Proposition~\ref{prop:nosyz}) that the 14 quadric relations have no
linear syzygies; this gives $b_{2, 1}=0$.  Using (B3) again, we conclude
$b_{i, 1}=0$ for $i \ge 2$.  We have thus completed the first two rows of the
table.  The last two rows can then be completed by symmetry.  The middle row
can now be determined from (B5) by evaluating both sides at $k=2, \ldots,
10$ and solving the resulting upper triangular system of equations for
$b_{i ,2}$.  (In fact, the computation is simpler than that since $b_{i, 2}=
b_{8-i, 2}$ and we know $b_{0, 2}=b_{1, 2}=0$, the latter vanishing
coming from Proposition~\ref{prop:i2-i3}.)
\end{proof}

Proposition~\ref{prop:betti} --- in particular, the $i=1$ column of the table ---
shows that $I_8$ is generated by its degree two piece.  Thus we have proved
Theorem~\ref{mainthm}.

\begin{remark}
The resolution of $R$ as a $P$-module, without any consideration of grading,
is given by Freitag and Salvati Manni \cite[Lemma~1.3, Theorem~ 1.5]{fs}.
It was obtained by computer.
\end{remark}

\section{Working over $\Z$:  Proof and discussion of  Theorem~\ref{mainthm2}}
\label{s-integrality}

\noindent
{\it In this section we take the base ring $k$ to be $\Z$.}

\vskip 1ex

We begin with a short discussion of linear algebra over $\Z$.  Let $M$ be a
finite free $\Z$-module and let $N$ be a submodule.  We say that $N$ is
\emph{saturated} (in $M$) if whenever $nx$ belongs to $N$, with $n \in \Z$
and $x \in M$, we have that $x$ belongs to $N$.  Of course, $N$ is saturated
if and only if it is a summand of $M$.  Note that if $N$ and $N'$ are
saturated submodules such that $N \otimes \Q=N' \otimes \Q$ then $N=N'$.
Finally, we remark that $I^{(n)}$, the $n$th graded piece of the ideal, is
a saturated submodule of $\Sym^n(V)$.  This is easily seen as $I^{(n)}$ is
the kernel of $\Sym^n(V) \to R^{(n)}$, and $R^{(n)}$ is torsion free.

We begin our discussion proper by giving an explicit formula for $s'$ in terms
of the basis of non-crossing matchings (see Figure~\ref{fig:noncross} for a
listing of these 14 generators):
\begin{displaymath}
\begin{split}
s' \; = \quad &x_1x_2(x_1+x_2) + x_1x_2(z_1+z_2+z_3+z_4+z_5+z_6+z_7+z_8)-
(x_1y_2y_4 + x_2y_3y_1) \\
&+(x_1z_2z_6 + x_2z_3z_7 + x_1z_4z_8 + x_2z_5z_1)
+(y_1z_2z_6+y_2z_3z_7+y_3z_4z_8+y_4z_5z_1) \\
&-(z_1z_2z_3 + z_2z_3z_4 + z_3z_4z_5 + z_4z_5z_6 + z_5z_6z_7 +z_6z_7z_8+
z_7z_8z_1+z_8z_1z_2).
\end{split}
\end{displaymath}
(Note that for the formula to be unambiguous we need to specify how the edges of
the matchings are directed.  Label the vertices from 1 to 8 going clockwise,
starting at any vertex.  Then the edges are directed to point from smaller to
larger numbers.  The choice of starting vertex does not affect the above
expression for $s'$.) This formula was found with the aid of a computer by taking
the skew-average of a particular element of $\Sym^3(V)$.  This element is related
to the generalized Segre cubics of \cite{HMSV} and was chosen because it has a
large isotropy subgroup.  The right side of the expression for $s'$ is visibly
non-zero as we are in the polynomial ring on the $x$, $y$ and $z$ variables.

\begin{figure}[ht]
\includegraphics[scale=0.6]{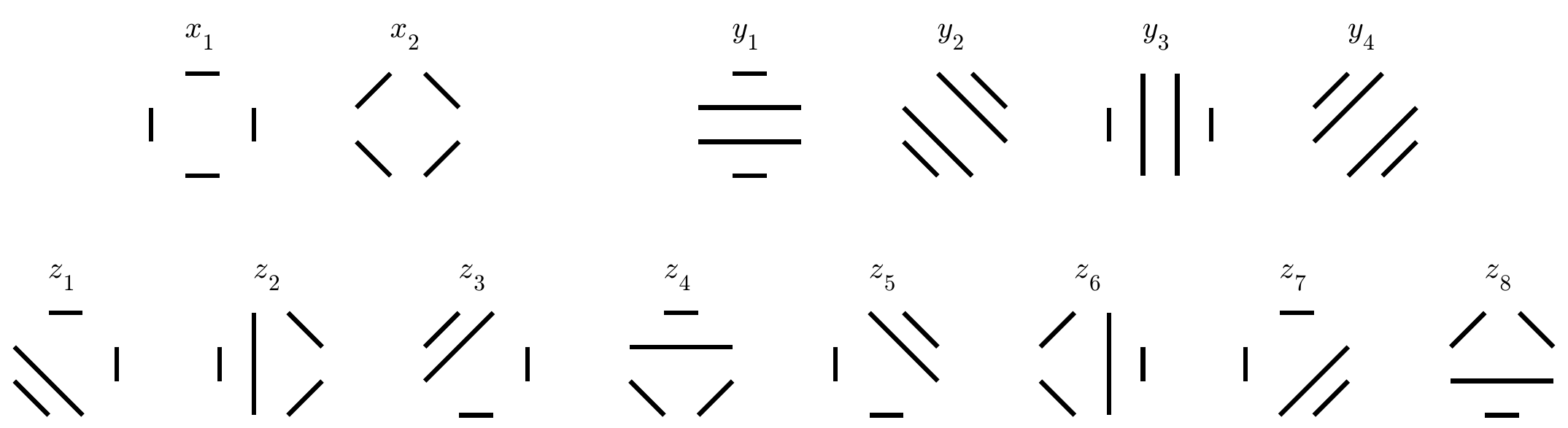}
\caption{The fourteen non-crossing matchings.}
\label{fig:noncross}
\end{figure}

\begin{proposition}
The 14 partial derivatives of $s'$ give a basis for $I^{(2)}$ as a $\Z$-module.
\end{proposition}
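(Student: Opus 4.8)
The plan is to reduce the proposition to the single assertion that the sublattice $J \subseteq \Sym^2(V)$ spanned over $\Z$ by the fourteen partial derivatives $\partial_i s'$ is \emph{saturated} in $\Sym^2(V)$, and then to verify saturatedness by exhibiting one unimodular $14 \times 14$ minor of the coefficient matrix read off from the displayed formula for $s'$.

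First I record membership and rank. Since the displayed formula shows $s' \in \Sym^3(V)$ with $\Z$-coefficients, each $\partial_i s'$ lies in $\Sym^2(V)$. As $s'$ is a rational multiple of the cubic $s$ of Theorem~\ref{mainthm}, Proposition~\ref{prop:quad} shows that after tensoring with $\Q$ the elements $\partial_i s'$ lie in $I^{(2)} \otimes \Q$, are linearly independent, and span $I^{(2)} \otimes \Q$. Because $I^{(2)}$ is saturated in $\Sym^2(V)$ (it is the kernel of the map to the torsion-free module $R^{(2)}$, as noted at the start of this section), we conclude $\partial_i s' \in I^{(2)}$, so $J \subseteq I^{(2)}$; both $J$ and $I^{(2)}$ are free of rank $14$ with $J \otimes \Q = I^{(2)} \otimes \Q$ inside $\Sym^2(V) \otimes \Q$. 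Granting that $J$ is saturated, $J$ and $I^{(2)}$ are then two saturated submodules of $\Sym^2(V)$ with the same $\Q$-span, hence equal by the remark recalled at the start of the section; and $14$ generators of a free rank-$14$ module form a basis, which is the assertion of the proposition. So everything comes down to showing $J$ is saturated, equivalently that $\Sym^2(V)/J$ is torsion-free, equivalently that for every prime $p$ the reductions $\partial_i s' \bmod p$ stay linearly independent in $\Sym^2(V) \otimes \F_p$. Fixing the $\Z$-basis of $V$ given by the fourteen non-crossing matchings $x_1, x_2, y_1,\dots,y_4, z_1,\dots,z_8$ of Figure~\ref{fig:noncross} (a $\Z$-basis by Kempe's planarity theorem) and the corresponding monomial $\Z$-basis of $\Sym^2(V)$, the $\partial_i s'$ become the rows of an explicit $14 \times 105$ integer matrix $A$, and $J$ is saturated if and only if the gcd of the $14 \times 14$ minors of $A$ is $1$; it suffices to produce one such minor equal to $\pm 1$.

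The remaining step, which is where a computer is genuinely used and which I expect to be the main obstacle, is to differentiate the displayed formula (each $\partial_i s'$ is a signed sum of a handful of degree-two monomials) and select fourteen monomials on which the corresponding square submatrix of $A$ is triangular, after reordering, with $\pm 1$ on the diagonal. One expects to use ``leading'' monomials: the two $x$-derivatives are pinned down by the $x_1x_2$-type terms coming from $x_1^2x_2$ and $x_1x_2^2$; the four $y$-derivatives by the terms $x_1y_2y_4$, $x_2y_3y_1$ and $y_iz_az_b$; and the eight $z$-derivatives by the octagon of cubics $z_{j-1}z_jz_{j+1}$ together with the mixed terms $x_iz_az_b$. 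There is no conceptual shortcut to the claim that the resulting $14\times 14$ determinant is a unit; once it is verified, the reductions above finish the proof.

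As an alternative to this last step one may observe that for each prime $p \ge 11$ the argument of Proposition~\ref{prop:quad}, which only uses invertibility of $8!$, goes through verbatim over $\F_p$ and gives linear independence of $\partial_i s' \bmod p$ directly, so that only the four primes $p \in \{2,3,5,7\}$ require the explicit computation; but exhibiting a single $\pm 1$ minor disposes of all primes uniformly and is cleaner.
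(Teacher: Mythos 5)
Your proposal is correct and follows essentially the same route as the paper: reduce to showing the $\Z$-span $J$ of the fourteen partial derivatives is saturated in $\Sym^2(V)$, note $J\otimes\Q = I^{(2)}\otimes\Q$ by Proposition~\ref{prop:quad}, and then use the remark about saturated submodules with equal $\Q$-span to conclude $J = I^{(2)}$. The one refinement the paper makes, which you predict as a computer check but which is actually a hand observation, is that the relevant $14\times 14$ submatrix can be chosen \emph{diagonal}, not merely triangular: each $\partial s'/\partial v$ contains a monomial of unit coefficient occurring in none of the other thirteen derivatives (for instance $x_2^2$ appears only in $\partial s'/\partial x_1$), so the unimodular minor is immediate from the displayed formula without any machine assistance.
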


\begin{proof}
The reader may check that each of the 14 partial derivatives of $s'$ contains
a monomial with unit coefficient which does not appear in the other 13 partial
derivatives.  For example $\frac{\partial s'}{\partial x_1}$ contains the
monomial term $x_2^2$ with coefficient $+1$, and this monomial does
not appear in any of the other 13 partial derivatives.  It follows that the
$\Z$-module spanned by these 14 quadrics inside of $\Sym^2(V)$ is saturated.
As these 14 quadrics give a basis for $I^{(2)} \otimes \Q$ and $I^{(2)}$ is
saturated in $\Sym^2(V)$ we see that they must in fact give a basis for
$I^{(2)}$ as a $\Z$-module.  Thus the partial derivatives of $s'$ give a basis
for $I^{(2)}$ as a $\Z$-module.
\end{proof}

We can now prove Theorem~\ref{mainthm2}:

\begin{proof}[Proof of Theorem~\ref{mainthm2}]
Let $J$ be the ideal of $\Sym(V)$ generated by $s'$ and its 14 partial
derivatives.  We must show $I=J$.  By the main theorem of \cite{hmsv1} it suffices
to show $I^{(2)}=J^{(2)}$, $I^{(3)}=J^{(3)}$, and $I^{(4)} = J^{(4)}$.
The previous proposition
established the first of these equalities.  We must establish the second.

Now, as in the previous proof, we know that $J^{(3)} \otimes \Q=I^{(3)}
\otimes \Q$ and that $I^{(3)}$ is saturated in $\Sym^3(V)$.  Thus to prove
$I^{(3)}=J^{(3)}$ it suffices to show that $J^{(3)}$ is saturated.
Unfortunately we do not how to do this by hand without an exorbitant amount
of work.  However, we can use the computer algebra system Magma \cite{Magma}
to check this; see the website%
\footnote{http://www-personal.umich.edu/$\sim$howardbj/8points.html}
of the first author for the code.  We perform this check as follows.
Create a $197 \times 560$ matrix $M$, with the columns indexed by the cubic
monomials in the 14 non-crossing variables and with the rows corresponding to
the 196 possible products $u \frac{\partial s'}{\partial v}$, for $u, v
\in \{x_1, x_2, y_1, \ldots, y_4, z_1, \ldots, z_8\}$ as well as $s'$.  We
verify that the elementary divisors of $M$ are all equal to $1$ by computing
the Smith normal form of $M$ in Magma.  Similarly we check by computer 
that in degree 4 that the $197 \cdot 14=2758$ quartics generated by 
the cubics span a saturated sub-lattice of $\Sym^4(R^{(1)})$ of rank $1295
=\rk{I^{(4)}}$.  (We compute the rank of $I^{(4)}$ by expressing it as
$\rk(\Sym^4(R^{(1)}))-\rk(R^{(4)})$.)

Now suppose that $J_0$ is the ideal generated by the 14 partial derivatives
alone.  Similar to the above, we define a $196 \times 560$ matrix $M_0$, with
rows indexed by the $u \frac{\partial s'}{\partial v}$, and we find using Magma
that it has an elementary divisor equal to $3$.  This shows that $I \ne J_0$
and so the cubic $s'$ is necessary when 3 is not invertible.
\end{proof}

\subsection{The Betti diagram in characteristic $p > 0$}

We now explain how the results and proofs in \S \ref{s:betti} can be adapted to
work over a field $k$ of positive characteristic.  First, in \cite{HMSV} we will
show that $R$ is Cohen-Macaulay.  In fact, we will show that $R_n$, over any
field, is Cohen-Macaulay.  (One cannot use the Hochster-Roberts theorem to prove
this as the group $\SL(2) \times T$ does not have a semi-simple representation
category in positive characteristic.)  The proof of Proposition~\ref{prop:goren}
then carries over to show that $R$ is Gorenstein with $a=-2$.  Note that
Stanley's theorem is true over any field and that the Hilbert series of $(R)_k$
is independent of the field $k$ as $(R)_{\Z}$ is flat over $\Z$.

We thus see that Proposition~\ref{prop:goren} holds true over any field.  We
now turn to Proposition~\ref{prop:betti}.  We first note that the same
reasoning used in the proof of Proposition~\ref{prop:i2-i3} shows that
$b_{1, 2}=b_{2, 1}$ over any field.  When $\chr{k} \ne 3$ Theorem~\ref{mainthm2}
gives $b_{1, 2}=0$.  The proof of Proposition~\ref{prop:betti} then carries
over exactly the same to this situation.  Thus the Betti diagram is the same
as in characteristic zero.  Now consider the case where $\chr{k}=3$.
Theorem~\ref{mainthm2} then gives $b_{1, 2}=1$ and so $b_{2, 1}=1$ as well.
>From this, one may conclude $b_{3, 1}=0$ (as there must be at least two relations
to produce a syzygy) and thus that $b_{i, 1}=0$ for $i \ge 3$.  We thus have the
first two rows of the table and by symmetry get the last two rows.  One can again
compute the middle row using (B5).  However, all that goes into this computation
is the alternating sum of the $b_{i, j}$ along diagonals $i+j=n$.  Since
$b_{1, 2}$ still equals $b_{2, 1}$, the alternating sum along the $i+j=3$ diagonal
is still the same.  From this we see that the middle row is the same as in
characteristic zero.

To sum up, we have proved the following:

\begin{proposition}
Let $k$ be a field.  If $\chr{k} \ne 3$ then the Betti diagram of $(R)_k$ is
the same as that given in Proposition~\ref{prop:betti}.  If $\chr{k}=3$ then
the only change is that $b_{1, 2}=b_{2, 1}=1$ and, symmetrically, $b_{7, 2}=
b_{6, 3}=1$.
\end{proposition}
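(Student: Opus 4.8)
The plan is to rerun the proof of Proposition~\ref{prop:betti} over a general field $k$, isolating the inputs that hold in every characteristic. Since $(R_L)_{\Z}$ is flat over $\Z$, the Hilbert function $f$, and each $\dim I^{(d)}$, is independent of $k$. By the discussion preceding the statement, $R$ is Cohen--Macaulay over any field (from \cite{HMSV}), hence Gorenstein with $a$-invariant $-2$ by Stanley's criterion (Proposition~\ref{prop:goren}), so property (B6) gives the symmetry $b_{i,j}=b_{8-i,4-j}$ over any $k$. Next, $P=\Sym(V)\to R$ is the free envelope of $R$, giving $b_{0,0}=1$ and $b_{1,0}=0$; $b_{1,1}=\dim I^{(2)}=14$; the $14$ quadric generators are linearly independent over any $k$, so $b_{2,0}=0$, whence $b_{i,0}=0$ for $i\ge1$ by (B3). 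Finally, the formal argument behind Proposition~\ref{prop:i2-i3} --- using only $\dim\End(V)=\dim I^{(3)}=196$ and the free-envelope identification of $F_2^{(3)}$ with the degree-$3$ part of $\ker(F_1\to F_0)$ --- shows $b_{1,2}=b_{2,1}$ over any $k$. Thus the whole table is pinned down once $b_{1,2}$ is known.

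If $\chr{k}\ne3$, Theorem~\ref{mainthm2} shows $I$ is generated over $\Z[1/3]$, hence over $k$, by the partial derivatives of $s'$, so $I$ is generated by quadrics and $b_{1,2}=0$, hence $b_{2,1}=0$. Since $b_{2,0}=b_{2,1}=0$, (B3) forces $b_{i,j}=0$ for $i\ge2$ and $j\le1$, and (B6) then forces $b_{i,j}=0$ for $i\le6$ and $j\ge3$. This confines the nonzero entries to the $i=0,1,7,8$ columns --- filled in by $b_{0,0}=b_{8,4}=1$, $b_{1,1}=b_{7,3}=14$ and the vanishings just obtained --- together with the $j=2$ row, which is recovered exactly as in Proposition~\ref{prop:betti} by evaluating (B5) at $k=4,5,6$ and using $b_{i,2}=b_{8-i,2}$. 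So the Betti diagram coincides with the characteristic-zero one.

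If $\chr{k}=3$, the Smith-normal-form computation in the proof of Theorem~\ref{mainthm2} (the $196$ cubics $u\,\partial s'/\partial v$ span a codimension-one sublattice of $I^{(3)}$, with $s'$ completing a basis) gives $b_{1,2}=1$, hence $b_{2,1}=1$. The one step that is not pure bookkeeping is $b_{3,1}=0$, which I would prove directly from minimality of the resolution. Since $b_{2,0}=0$ and $b_{2,1}=1$, the module $F_2$ has a unique generator $e$ in degree $3$, and $e$ maps to a nonzero element $z$ of $\ker(F_1\to F_0)$ ($z\ne0$ since it is a minimal generator). By minimality, a degree-$4$ basis vector of $F_3$ cannot hit the degree-$4$ generators of $F_2$ --- that would need a constant matrix entry --- so it maps into $P^{(1)}\cdot e$; its image in $F_1$ is then $\ell z$ for a linear form $\ell$, and as this image lies in $\ker(F_2\to F_1)$ and $F_1$ is torsion-free we get $\ell z=0$, hence $\ell=0$. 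Thus every such basis vector maps to $0$, impossible for a minimal generator; so $b_{3,1}=0$. (This is the ``one needs at least two relations to produce a syzygy'' of the preceding discussion: a single degree-$3$ first syzygy admits no linear relation among the first syzygies.) Now (B3) gives $b_{i,j}=0$ for $i\ge3$, $j\le1$, and (B6) the symmetric vanishings, so the only nonzero entries besides the characteristic-zero ones are $b_{1,2}=b_{2,1}=1$ and $b_{7,2}=b_{6,3}=1$.

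Finally, the $j=2$ row is unchanged in characteristic $3$: identity (B5) depends only on the alternating sums $c_n=\sum_{i+j=n}(-1)^i b_{i,j}$ along the anti-diagonals, and the two new entries form the sign-opposite pairs $(b_{1,2},b_{2,1})$ on the $n=3$ anti-diagonal and $(b_{6,3},b_{7,2})$ on the $n=9$ one, so every $c_n$ --- and hence the linear system solved for $b_{2,2},b_{3,2},b_{4,2}$ --- is identical to the characteristic-zero case. Therefore the middle row is again $175,512,700,512,175$, which completes the proof. The sole genuine obstacle in all of this is the verification that $b_{3,1}=0$ in characteristic $3$; everything else is bookkeeping once the characteristic-free inputs of the first paragraph are in hand.
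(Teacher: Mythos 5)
Your proof is correct and follows essentially the same route as the paper: reduce to characteristic-independent inputs (flatness over $\Z$ for the Hilbert function, Cohen--Macaulayness from \cite{HMSV}, Stanley's criterion for Gorenstein with $a=-2$, and the dimension count $\dim\End(V)=\dim I^{(3)}=196$ giving $b_{1,2}=b_{2,1}$), then split on whether $\chr k=3$ via Theorem~\ref{mainthm2}. The one place you go beyond the paper is the verification of $b_{3,1}=0$ when $\chr k=3$: the paper dispatches this with the parenthetical remark that ``there must be at least two relations to produce a syzygy,'' whereas you give the actual minimality argument --- a degree-$4$ generator of $F_3$ would have image $\ell e$ in $\mf{m}F_2$, whose image $\ell z$ in the torsion-free module $F_1$ must vanish, forcing $\ell=0$, contradicting minimality. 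That is exactly the content of the paper's aside, spelled out carefully, and it is the only step that is not pure bookkeeping once the characteristic-free facts are collected. Your observation that the new entries in characteristic $3$ contribute sign-opposite pairs to the anti-diagonal sums $c_3$ and $c_9$ matches the paper's justification that the middle row is unchanged.
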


\bigskip

{\tiny

Benjamin Howard:
Department of Mathematics,
University of  Michigan,
Ann Arbor, MI 48109, USA,
howardbj@umich.edu

\smallskip

John Millson:
Department of Mathematics,
University of Maryland,
College Park, MD 20742, USA,
jjm@math.umd.edu

\smallskip

Andrew Snowden:
Department of Mathematics,
Princeton University,
Princeton, NJ 08544, USA,
asnowden@math.princeton.edu

\smallskip

Ravi Vakil:
Department of Mathematics,
Stanford University,
Stanford, CA 94305-2125, USA,
vakil@math.stanford.edu

}

\end{document}